\newtheorem{Thm}{Theorem}[section]
\newtheorem{Lem}[Thm]{Lemma}
\newtheorem{Prop}[Thm]{Proposition}
\newtheorem{Rem}[Thm]{Remark}
\newtheorem{Def}[Thm]{Definition}
\newcommand{\C}{\mathbb{C}}           
\newcommand{\Z}{\mathbb{Z}}
\newcommand{\ad}{\text{ad}}
\newcommand{\id}{\mathrm{id}}
\newcommand{\fa}{{\mathfrak a}}             
\newcommand{\fb}{{\mathfrak b}}
\newcommand{\fg}{{\mathfrak g}}
\newcommand{\fh}{{\mathfrak h}}
\newcommand{\fn}{{\mathfrak n}}
\newcommand{\fp}{{\mathfrak p}}
\newcommand{\hfh}{\hat{\fh}}
\newcommand{\hfb}{\hat{\fb}}
\newcommand{\hfp}{\hat{\fp}}
\newcommand{\hI}{\hat{I}}
\newcommand{\hW}{\hat{W}}
\newcommand{\ga}{\alpha}
\newcommand{\gb}{\beta}
\newcommand{\gl}{\lambda}
\newcommand{\gL}{\Lambda}
\newcommand{\gd}{\delta}
\newcommand{\gD}{\Delta}
\newcommand{\gt}{\theta}
\newcommand{\gG}{\Gamma}
\renewcommand{\ggg}{\gamma}
\newcommand{\gs}{\sigma}
\renewcommand{\hat}{\widehat}
\newcommand{\ol}{\overline}
\newcommand{\wti}{\widetilde}
\newcommand{\wt}{\mathrm{wt}}
\newtheorem*{Thm2}{Theorem A}
\newtheorem*{Thm3}{Theorem B}
\newcommand{\bL}{\mathbf{L}}
\newcommand{\mA}{\mathcal{A}}
\renewcommand{\sl}{\mathfrak{sl}}
\newcommand{\Res}{\mathrm{Res}}
\newcommand{\mU}{\mathcal{U}}
\newcommand{\germ}{\mathfrak}
\newcommand{\het}{\mathrm{ht}}
\newcommand{\Ish}{I_{\mathrm{sh}}}
\newcommand{\mJ}{\mathcal{J}}
\newcommand{\mV}{\mathcal{V}}
\newcommand{\wh}{\widehat}
\title{Tensor products of Kirillov-Reshetikhin modules and fusion products}
\author{Katsuyuki Naoi}
\begin{document}

\address{%
Institute of Engineering \\
Tokyo University of Agriculture and Technology\\
2-24-16 Naka-cho, Koganei-shi, Tokyo 184-8588, JAPAN}
\email{naoik@cc.tuat.ac.jp}


\begin{abstract}
 We study the classical limit of a tensor product of Kirillov-Reshetikhin modules
 over a quantum loop algebra,
 and show that it is realized from the classical limits of the tensor factors using the notion of fusion products.
 In the process of the proof, we also give defining relations of the fusion product of the (graded) classical limits
 of Kirillov-Reshetikhin modules.
\end{abstract}

\maketitle

\section{Introduction}

Let $\fg$ be a complex simple Lie algebra, and $\bL\fg=\fg \otimes \C[t,t^{-1}]$ the associated loop algebra.
The theory of finite-dimensional representations over the quantum loop algebra $U_q(\bL\fg)$ has been intensively studied 
from various viewpoints over the past two decades.
\textit{Kirillov-Reshetikhin modules} (KR modules, for short) are a subclass of finite-dimensional simple $U_q(\bL\fg)$-modules,
and have attracted a particular interest
because of their rich combinatorial structures and several applications to mathematical physics (see 
\cite{MR906858,MR1745263,MR1993360,MR2254805,MR2403558,MR2642561,MR2767945,MR2998791} and references therein).

One approach which has been used to explore the structure of a finite-dimensional $U_q(\bL\fg)$-module
is to study its \textit{classical limit}, or \textit{graded limit}.
A classical limit is an $\bL\fg$-module which is obtained from a $U_q(\bL\fg)$-module by specializing the quantum parameter $q$ to $1$.
In addition, in many interesting cases, by restricting a classical limit to the current algebra $\fg[t] = \fg \otimes \C[t] \subseteq \bL\fg$ 
and taking a pull-back with respect to an automorphism, we obtain a graded $\fg[t]$-module, which is called the graded limit.
After the pioneering work of Chari and Pressley \cite{MR1850556}, several formulas of characters and multiplicities are obtained
for KR modules \cite{MR1836791,MR2238884} and their generalizations called minimal
affinizations \cite{MR2587436,MR3120578,MR3210588,LiNaoi}, by analyzing their classical or graded limits.

\begin{sloppypar}
Moreover, graded limits are important as well in view of the theory of finite-dimensional graded $\fg[t]$-modules,
since they provide  nontrivial and probably interesting such modules.
(Though the original motivation to study finite-dimensional graded $\fg[t]$-modules 
was mainly an application to the theory of $U_q(\bL\fg)$-modules,
they are now also of independent interest, since  they have connections with problems 
arising in mathematical physics such as the $X=M$ conjecture \cite{MR2290922,MR2428305,MR2964614},
and theory of symmetric functions such as Macdonald polynomials \cite{MR3371494}.)
In fact, in \cite{MR3296163} the authors have constructed a short exact sequence of $\fg[t]$-modules
as a graded limit analog of the $T$-system, which is a distinguished exact sequence of $U_q(\bL\fg)$-modules (see \cite{MR2254805}).
This is an interesting example of the study of graded $\fg[t]$-modules, 
motivated by 
the notion of graded limits.
Since the process of obtaining a graded limit from a classical limit is elementary,
we will mainly focus on classical limits in this paper.
\end{sloppypar}

One difficulty in studying classical limits is the noncommutativity between the operations of tensor product and taking the limit.
Namely, the limit of a tensor product of $U_q(\bL\fg)$-modules is not necessarily isomorphic to the tensor product of their limits.
There are several examples (which will be listed below) which suggest that, to obtain the limit,
we have to replace (some of) tensor products with \textit{fusion products}.
Here fusion products are graded analogs of tensor products of graded $\fg[t]$-modules introduced by Feigin and Loktev in \cite{MR1729359}.
The purpose of this paper is to show the statement
for an arbitrary tensor product of KR modules.

Let $I$ be the index set of simple roots of $\fg$.
We denote a KR module by $W^{i,\ell}_q(a)$, which is parametrized by an index $i \in I$, a positive integer $\ell$ and a rational function
$a \in \C(q)$.
The graded limit of $W^{i,\ell}_q(a)$ is denoted by $W^{i,\ell}$ (which does not depend on $a$).
We now state the main theorem of this paper (Theorem \ref{Thm:Main}).

\begin{Thm2}
 Assume that a given tensor product $W^{i_1,\ell_1}_q(a_1)\otimes \cdots \otimes W^{i_p,\ell_p}_q(a_p)$ of KR modules has a classical limit
 (for the precise conditions, see Theorem \ref{Thm:Main}).\\
 {\normalfont(i)} If $a_1(1) = \cdots = a_p(1) = c \in \C^\times$,
 then the following isomorphism of $\fg[t]$-modules holds:
 \begin{equation*}\label{eq:isomorphism_intro}
  \ol{W^{i_1,\ell_1}_q(a_1) \otimes \cdots \otimes W^{i_p,\ell_p}_q(a_p)} \cong \varphi_c^*\big(W^{i_1,\ell_1} * \cdots * W^{i_p,\ell_p}\big).
 \end{equation*}
 Here the left-hand side is the classical limit, $*$ denotes the fusion product,
 and $\varphi_c^*$ the pull-back with respect to the automorphism $\varphi_c$ of $\fg[t]$ defined by 
 $\varphi_c\big(x\otimes f(t)\big)=x\otimes f(t+c)$.\\
 {\normalfont (ii)} In the general case, the following isomorphism of $\fg[t]$-modules holds:
  \[ \ol{W_q^{i_1,\ell_1}(a_1) \otimes\cdots \otimes W^{i_p,\ell_p}_q(a_p)} \cong \bigotimes_{c \in \C^\times} \varphi_c^* \,\Big(
     \underset{k;\, a_k(1) = c}{\text{\huge{\lower0.2ex\hbox{$*$}}}} W^{i_k,\ell_k}\Big),
  \]
  where {\huge{\lower0.2ex\hbox{$*$}}}\,$W^{i_k,\ell_k}$ denotes the fusion product of $W^{i_k,\ell_k}$'s.
\end{Thm2}

We remark that the assertion (i) implies that the graded limit of the tensor product is isomorphic to
the fusion product of their graded limits.
There are several special cases where the theorem has already been proved, which we list here.
\begin{itemize}
 \item In the case where $\ell_1=\cdots =\ell_p =1$, the result follows from \cite{MR2271991,MR2323538,MR2855081}.
 \item In the case of type $A$ with $i_1=\cdots=i_p$ and $a_1=\cdots =a_p$, the result is proved in \cite{BP}.
 \item For a special class of tensor products appearing in the $T$-system, the result follows from \cite{MR3296163}.
\end{itemize}

\noindent Our proof is valid for arbitrary $\fg$ and an arbitrary tensor product of KR modules,
even if it is reducible (as long as the classical limit exists).
It should be mentioned that a special class of tensor products in type $A$, whose factors are not necessarily isomorphic to KR 
modules, is treated in \cite{BCM}.

The proof of Theorem A is carried out in two steps. 
In the first step, we give defining relations of the fusion product of the graded limits of KR modules.
Let us mention the precise statement.
Let $\varpi_i$ ($i \in I$) be the fundamental weights.
Let $\fg = \fn_+ \oplus \fh \oplus \fn_-$ be a triangular decomposition,
and denote the Chevalley generators by $e_i,h_i,f_i$ ($i\in I$).
We show the following theorem (Theorem \ref{Thm:current}), which is a generalization of the result for $\fg=\sl_2$ 
in \cite{MR1988973,MR3296163}.

\begin{Thm3}
 A fusion product $W^{i_1,\ell_1}*\cdots*W^{i_p,\ell_p}$ is isomorphic to the $\fg[t]$-module
 generated by a single vector $v$ with relations
 \begin{equation*}
  \begin{split}
   \fn_+[t]v=0&, \ \ \big(h \otimes f(t)\big)v= f(0)\langle h,\gl\rangle v \ \text{ for } h \in \fh, f(t) \in \C[t],\\
   \Big(F_i(z)^r\Big)_s&v=0 \ \text{ for } i\in I, r >0, s<-\sum_{k;\, i_k = i} \min\{r, \ell_k\},
  \end{split}
 \end{equation*}
 where we set $\gl = \sum_{k=1}^p \ell_k\varpi_{i_k}$, and $\Big(F_i(z)^r\Big)_s \in U(\fg[t])$ denotes the coefficient of $z^s$ 
 in the $r$-th power of
 $F_i(z) = \sum_{k =0}^\infty (f_i\otimes t^k) z^{-k-1} \in U(\fg[t])[[z^{-1}]]$.
\end{Thm3}
This theorem answers affirmatively the question raised in \cite[Introduction]{MR3226992} (see Remark \ref{Remark_for_current} (a)
of the present paper).
Note that the presentation is more refined than that given in \cite{Naoi:Schur} in type $A$.
The main tool we use for the proof is the functional realization of the dual of $U(\bL\fn_-)$,
which has been introduced in \cite{MR1275728} and further developed in \cite{MR1934307,MR2215613,MR2290922}.
In particular, in \cite{MR2215613,MR2290922} the authors also study fusion products of $W^{i,\ell}$'s using the functional
realization (though with a different motivation), which inspired our proof.

Using Theorem B, we can reduce the proof of Theorem A to the case $\fg = \mathfrak{sl}_2$
(the key to this is the fact that the relations in Theorem B essentially contain only root vectors corresponding to simple roots).
Then in the second step, we show the case of $\sl_2$ independently.
This is an outline of the proof of Theorem A.

The plan of this paper is as follows.
In Section \ref{Section:Preliminaries}, we give preliminary definitions and basic results.
In Section \ref{Section:Main_Thm}, we reduce the proof of Theorem A to the case $\fg = \sl_2$ and Theorem B.
In Section \ref{Section:Proof1} we prove Theorem B,
but we postpone some proofs of assertions concerning with the functional realization to Appendix \ref{Appendix}.
In Section \ref{section:proof_of_Uqsl2}, we show Theorem A for $\fg = \mathfrak{sl}_2$.
Finally in Appendix \ref{Appendix}, we give proofs postponed in Section \ref{Section:Proof1}.

\section{Preliminaries}\label{Section:Preliminaries}

\subsection{Lie algebras}\label{Subsection:Lie_algebras}

Let $\hat{C}=(c_{ij})_{0\leq i,j\leq n}$ be a Cartan matrix of nontwisted affine type,
and assume that  the indices are ordered as \cite[Section 4.8]{MR1104219}.
Let $C = (c_{ij})_{1\le i,j \le n}$, which is a Cartan matrix of finite type.
Set $I = \{1,\dots,n\}$ and $\hat{I} = \{0\} \cup I$.
Fix a diagonal matrix $\hat{D}=\mathrm{diag}(d_0,\ldots,d_n)$ such that $d_i\in \Z_{>0}$ and $\hat{D}\hat{C}$ is symmetric.

Let $\fg$ be the complex simple Lie algebra associated with $C$,
and fix a triangular decomposition $\fg = \fn_+ \oplus \fh \oplus \fn_-$. 
Let $\ga_i \in \fh^*$ be the simple roots, and $\varpi_i \in \fh^*$ the fundamental weights ($i \in I$).
Let $P$ and $Q$ be the weight and root lattices respectively, and set 
\[ P^+ = \sum_{i \in I} \Z_{\ge 0} \varpi_i, \ \ \ Q^+ =\sum_{i \in I} \Z_{\ge 0}\ga_i.
\]
For $\ggg = \sum_i m_i\ga_i \in Q^+$, let $\het(\ggg) = \sum_i m_i$ denote the \textit{height} of $\ggg$.
Denote by $R$ the root system, and by $R^+$ the set of positive roots.
For each root $\ga \in R$ denote by $h_\ga \in \fh$ its coroot.
Let $W$ be the Weyl group with simple reflections $\{s_i \mid i\in I\}$.
Let $\gt \in R^+$ be the highest root.

Denote by $\fg_\ga$ ($\ga \in R$) the root spaces, and for each $\ga \in R^+$ fix vectors
$e_\ga\in \fg_\ga$ and $f_\ga\in \fg_{-\ga}$ satisfying $[e_\ga,f_\ga]=h_{\ga}$.
We also use the notations $e_i = e_{\ga_i}$, $f_i = f_{\ga_i}$, $h_i =h_{\ga_i}$.
For $i \in I$, denote by $\mathfrak{sl}_{2,i}$ the Lie subalgebra of $\fg$ spanned by $e_i,f_i,h_i$.
For $\gl \in P^+$, denote by $V(\gl)$ the finite-dimensional simple $\fg$-module with highest weight $\gl$.

Let $\varpi_i^\vee \in \fh$ ($i \in I$) be the fundamental coweights, and $P^\vee = \bigoplus_{i \in I} \Z \varpi_i^\vee \subseteq \fh$
the coweight lattice.
The group 
\[ \wti{W}= W \ltimes P^\vee
\]
is called the \textit{extended affine Weyl group}. 
Write $w$ for $(w,0) \in \wti{W}$, and $t_x$ for $(\id,x) \in \wti{W}$.
Let $\hat{Q} = Q \oplus \Z \gd$ be the affine root lattice, where $\gd$ is the null root.
$\wti{W}$ acts on $\hat{Q}$ by
\[ w(\gl + a\gd) = w(\gl) + a\gd \ \text{ and } \ t_x(\gl+ a\gd) = \gl + (a-\langle x,\gl\rangle) \gd,
\] 
where $w \in W$, $x \in P^\vee$, $\gl \in Q$, $a \in \Z$.
Set $\ga_0 = \gd - \gt \in \hat{Q}$ and $s_0 = s_\gt t_{-h_\gt}\in \wti{W}$,
where $s_\gt$ is the reflection with respect to $\gt$.
Let 
\[ \hW=\langle s_i \mid i \in \hI\,\rangle \subseteq \wti{W}
\]
be the affine Weyl group.
Denote by $\gG$ the subgroup of $\wti{W}$ consisting of the elements preserving the set $\{\ga_i \mid i \in \hI\,\}$.
Then $\gG$ also acts on the set $\hI$ as permutations, and is identified with a subgroup of the Dynkin diagram automorphisms of $\hat{C}$.
It follows that $\wti{W} = \gG \ltimes \hat{W}$,
where $\tau \in \gG$ acts on $\hat{W}$ so that 
\[ \tau s_i \tau^{-1} = s_{\tau(i)} \ \text{ for $i \in \hI$}.
\]

Given a complex Lie algebra $\fa$, its \textit{loop algebra} $\bL\fa$ is defined by the tensor product $\fa \otimes \C[t,t^{-1}]$,
whose Lie algebra structure is given by 
\[ [x\otimes f(t), y\otimes g(t)] = [x,y] \otimes f(t)g(t).
\]
Denote by $\fa[t]$ and $t^k\fa[t]$ for $k \in \Z_{>0}$ the Lie subalgebras $\fa \otimes \C[t]$ and $\fa \otimes t^k\C[t]$,
respectively.
The Lie algebra $\fa[t]$ is called the \textit{current algebra} associated with $\fa$.
For $c\in \C$, define a Lie algebra automorphism $\varphi_c = \varphi_c^{\fa}$ on $\bL\fa$ by
\[ \varphi_c\big(x \otimes f(t)\big) = x \otimes f(t+c) \ \text{ for } x \in \fa,\ f(t) \in \C[t].
\]

\subsection{Fusion product}\label{Subsection:Fusion product}

Here we recall the notion of fusion products introduced in \cite{MR1729359}.
Note that the degree grading on $\C[t]$ induces $\Z$-gradings on $\fg[t]$ and $U(\fg[t])$.
Denote by $\fg[t]^k$ and $U(\fg[t])^k$ the subspaces with degree $k$.
We say a $\fg[t]$-module $M$ is \textit{graded} if a $\Z$-grading $M= \bigoplus_{k \in \Z} M^k$ is given
and $\fg[t]^k M^\ell \subseteq M^{\ell+k}$ holds for all $k,\ell\in \Z$.

Let $M_1,\ldots,M_p$ be cyclic finite-dimensional graded $\fg[t]$-modules with respective generators $v_1,\ldots,v_p$,
and $c_1,\ldots,c_p$ pairwise distinct complex numbers.
For each $1\le j \le p$, define a $\fg[t]$-module $(M_j)_{c_j}$ by the pull-back $\varphi_{c_j}^*M_j$, 
and set $M = (M_1)_{c_1}\otimes \cdots \otimes (M_p)_{c_p}$.
It follows from \cite{MR1729359} that $M$ is generated by the single vector $v_1\otimes \cdots \otimes v_p$ as a $\fg[t]$-module.
The module $M$ is not graded, but a filtration is defined on $M$ by
\[ F^{\leq k}(M) = \sum_{\ell \leq k}U(\fg[t])^{\ell}(v_1\otimes \cdots \otimes v_p).
\]
The associated graded space $\bigoplus_k F^{\leq k}(M)/F^{\leq k-1}(M)$ has a natural graded $\fg[t]$-module structure,
which is called the \textit{fusion product} of $M_1,\ldots,M_p$ and denoted by 
\[ M_1 * M_2*\cdots *M_p.
\]
Although the definition depends on the parameters $c_i$ and the generators $v_i$, we omit them for ease of notation. 
All fusion products appearing below are known not to depend on the parameters up to isomorphism,
and the choices of the generators will be clear from the context.
Note that the fusion product does not depend on the order of the factors up to isomorphism.

\subsection{Quantum loop algebras}\label{subsection:Quantum_loop_alg.}


Let $q$ be an indeterminate.
For $\ell \in \Z$, $s, s' \in \Z_{\ge 0}$ with $s\geq s'$, set 
\[ [\ell]_{q} = \frac{q^\ell - q^{-\ell}}{q-q^{-1}}, \ \ \ [s]_q! = [s]_q[s-1]_q\cdots[1]_q, \ \ \ \begin{bmatrix} s \\ s' \end{bmatrix}_q
   = \frac{[s]_q!}{[s']_q![s-s']_q!}.
\] 
Write $q_i = q^{d_i}$ for $i \in \hI$.
The quantum loop algebra $U_q(\bL\fg)$ is a $\C(q)$-algebra generated by $k_i^{\pm 1}$, $X_i^{\pm}$ ($i\in \hI)$ with relations
\begin{equation*}\label{eq:def_rel_of_QLA}
 \begin{split}
  k_ik_i^{-1} = k_i^{-1}k_i = 1, \ \ k_ik_j = k_jk_i, \ \ k_iX_j^{\pm}k_i^{-1} = q_i^{\pm c_{ij}}X_j^\pm, \ \ k_\gd = 1,\\
  [X_i^+,X_j^-] = \gd_{ij}\frac{k_i -k_i^{-1}}{q_i-q_i^{-1}}, \ \ \sum_{k=0}^{s} (-1)^k \begin{bmatrix} s \\ k\end{bmatrix}_{q_i} 
  (X_i^{\pm})^kX_j^\pm(X_i^{\pm})^{s-k} = 0 \ (i \neq j),
 \end{split}
\end{equation*}
where $k_\gd = \prod_{i \in \hI} k_i^{a_i}$ with $\gd = \sum a_i\ga_i$, and $s = 1-c_{ij}$.
Let $U_q(\fg)\subseteq U_q(\bL\fg)$ be the $\C(q)$-subalgebra generated by $k_i^{\pm 1},X_i^{\pm}$ ($i\in I$), which is the quantized enveloping
algebra associated with $\fg$.
The algebra $U_q(\bL\fg)$ has a Hopf algebra structure \cite{MR1227098,MR1300632}, and the comultiplication is given by
\[ \gD(X_i^+) = X_i^+ \otimes 1 + k_i \otimes X_i^+, \ \ \gD(X_i^-) = X_i^- \otimes k_i^{-1} + 1 \otimes X_i^-, \ \ 
   \gD(k_i) = k_i \otimes k_i.
\]

For $i \in \hI$, let $T_i = T_{i,1}''$ be the algebra automorphism of $U_q(\bL\fg)$ in \cite[Chapter 37]{MR1227098}.
Given $w = \tau w'\in \wti{W}$ with $\tau \in \gG$ and $w' \in \hW$, choose a reduced expression $w'=s_{i_1}\cdots s_{i_k}$ and
set $T_w= T_\tau T_{i_1}\cdots T_{i_k}$, where $T_\tau$ is the algebra automorphism on $U_q(\bL\fg)$ naturally induced from the diagram 
automorphism $\tau$.
The automorphism $T_w$ does not depend on the choice of the expression.
For $x \in P^\vee$, write $T_x = T_{t_{x}}$ for ease of notation.

There is another presentation of $U_q(\bL\fg)$ \cite{MR914215,MR1301623}.
In this presentation, $U_q(\bL\fg)$ is a $\C(q)$-algebra with generators   
\[ x_{i,r}^{\pm} \ (i \in I, r \in \Z), \ \ \ k_i^{\pm 1} \ ( i \in I), \ \ \ h_{i,m} \ (i \in I, m \in \Z \setminus \{0\})
\]  
and the following defining relations ($i,j \in I, r,r' \in \Z, m,m' \in \Z\setminus \{ 0\})$:
\begin{align*}
  k_ik_i^{-1} = k_i^{-1}&k_i =1, \ \ \ \ \ \ \ [k_i,k_j] = [k_i, h_{j,m}] = [h_{i,m},h_{j,m'}] = 0, \\
  k_i x_{j,r}^\pm k_i^{-1} = &\,q_i^{\pm c_{ij}}x_{j,r}^\pm,  \ \ \ \ \ \ \ 
  [h_{i,m}, x_{j,r}^{\pm}]= \pm \frac{1}{m}[m c_{ij}]_{q_i}x_{j,r+m}^{\pm}, \\
  &[x_{i,r}^{+}, x_{j,r'}^{-}] = \gd_{ij} \frac{\phi_{i,r+r'}^+ - \phi_{i,r+r'}^-}{q_i - q_i^{-1}},\\
  x_{i,r+1}^{\pm} x_{j,r'}^{\pm} -&\,q_i^{\pm c_{ij}}x_{j,r'}^\pm x_{i,r+1}^\pm 
  = q_i^{\pm c_{ij}}x_{i,r}^{\pm} x_{j,r'+1}^{\pm} - x_{j,r'+1}^{\pm}x_{i,r}^\pm,\\
  \sum_{\gs \in \mathfrak{S}_s} \sum_{k=0}^{s} (-1)^k \begin{bmatrix} s \\ k \end{bmatrix}_{\!q_i} 
    &x_{i, r_{\gs(1)}}^\pm \cdots x_{i, r_{\gs(k)}}^\pm x_{j,r'}^\pm x_{i,r_{\gs(k+1)}}^\pm \cdots x_{i,r_{\gs(s)}}^\pm
    = 0 \ \ (i \neq j)
\end{align*}
for all sequences of integers $r_1, \ldots,r_s$, where $s = 1 - c_{ij}$, $\mathfrak{S}_s$ is the symmetric group on $s$ letters,
and $\phi_{i,r}^\pm$'s are determined by equating coefficients of powers of $u$ in the formula
\[ \sum_{r = 0}^{\infty} \phi_{i, \pm r}^\pm u^{\pm r} = k_i^{\pm 1} \exp \left( \pm(q_i - q_i^{-1})\sum_{m = 1}^{\infty}
   h_{i, \pm m} u^{\pm m} \right),
\]
and $\phi_{i,\mp r}^{\pm} = 0$ for $r > 0$.
Let $o\colon I \to \{\pm 1\}$ be a map satisfying $o(i) = -o(j)$ whenever $c_{ij} < 0$.
Then we have 
\begin{equation}\label{eq:isom}
 x_{i,m}^{\pm} = o(i)^mT_{\varpi_i^\vee}^{\mp m}(X_i^{\pm}).
\end{equation}
Let $U_q(\bL\fn_{\pm})$ be the subalgebras of $U_q(\bL\fg)$ generated by $\{x_{i,r}^{\pm}\mid i\in I, r\in \Z\}$,
and $U_q(\bL\fh)$ the subalgebra generated by $\big\{k_i^{\pm 1}, h_{i,m}\bigm| i\in I, m\in \Z\setminus\{0\}\big\}$.
It is easily proved from the defining relations that 
\begin{equation}\label{eq:q-triangular}
  U_q(\bL\fg) = U_q(\bL\fn_-)U_q(\bL\fh)U_q(\bL\fn_+).
\end{equation}


A $U_q(\fg)$-module $M$ is said to be \textit{of type $1$} if 
\[ M = \bigoplus_{\gl \in P} M_\gl, \ \ \ M_\gl = \big\{v \in M \bigm| k_i^{\pm 1}v = q_i^{\pm\langle h_i, \gl\rangle} v\big\}.
\]
In this paper, we will only consider $U_q(\fg)$-modules (and $U_q(\bL\fg)$-modules) of type $1$.
For $\gl \in P^+$, denote by $V_q(\gl)$ the simple $U_q(\fg)$-module (of type $1$) with highest weight $\gl$.

Let $P_q^+$ denote the monoid (under coordinate-wise multiplication) of $I$-tuples 
of polynomials $\bm{\pi}= \big(\bm{\pi}_1(u),\ldots,\bm{\pi}_n(u)\big)$
such that each $\bm{\pi}_i(u)$ is expressed as
\begin{equation}\label{eq:splitting_expression}
 \bm{\pi}_i(u) = (1 -a_{1}u)(1-a_{2}u) \cdots (1-a_{k}u)
\end{equation}
for some $k \ge 0$ and $a_{j} \in \C(q)^\times$.
Define a map $\wt\colon P^+_q \to P^+$ by 
\[ \wt(\bm{\pi}) = \sum_{i \in I} \big(\deg \bm{\pi}_i\big)\varpi_i.
\] 
We say a $U_q(\bL\fg)$-module $V$ is \textit{$\ell$-highest weight} with \textit{$\ell$-highest weight vector} $v$ if it holds that
\[ x_{i,r}^+v = 0 \ \text{for} \ i\in I, r \in \Z, \ \ U_q(\bL\fh)v = \C(q)v, \ \ V = U_q(\bL\fg)v.
\]
It follows from \cite{MR1357195} that for every $\bm{\pi} \in P_q^+$, there exists a unique (up to isomorphism) 
simple finite-dimensional $\ell$-highest weight $U_q(\bL\fg)$-module, which we denote by $L_q(\bm{\pi})$, such that 
its $\ell$-highest weight vector $v_{\bm{\pi}}$ (which is unique up to a scalar multiplication) satisfies
\begin{equation}\label{eq:l-highest}
 k_i^{\pm 1} v_{\bm{\pi}} = q_i^{\pm \langle h_i,\wt(\bm{\pi})\rangle}v_{\bm{\pi}}, \ \ h_{i,m}v_{\bm{\pi}} = [m]_{q_i}d_{i,m}v_{\bm{\pi}}
 \ \text{ for $i \in I, m \in \Z\setminus \{0\}$},
\end{equation}
where $d_{i,m} \in \C(q)$ are determined by the formula
\begin{equation*} 
 \mathrm{exp}\bigg(-\sum_{m=1}^\infty d_{i,\pm m}u^m\bigg) = \bm{\pi}_i^{\pm}(u).
\end{equation*}
Here $\bm{\pi}_i^+(u) = \bm{\pi}_i(u)$, and $\bm{\pi}_i^-(u)= (1-a_1^{-1}u)\cdots (1-a^{-1}_ku)$ if (\ref{eq:splitting_expression}) holds.
If $V$ is an $\ell$-highest weight module and its $\ell$-highest weight vector
$v_{\bm{\pi}}$ satisfies (\ref{eq:l-highest}), we say that the $\ell$-highest weight of $V$ is $\bm{\pi}$.
The following lemma is a consequence of \cite[Corollary 6.9]{MR1810773}.

\begin{Lem}\label{Lem:dual}
 There exists a bijection $I \ni i \mapsto \bar{i} \in I$ and an integer $K$ such that,
 if we set $\bm{\pi}^* = \big(\bm{\pi}_{\bar{i}}(q^{K}u)\big)_{i \in I}$ for $\bm{\pi} \in P_q^+$,
 then the dual module $L_q(\bm{\pi})^*$ is isomorphic to $L_q(\bm{\pi}^*)$.
\end{Lem}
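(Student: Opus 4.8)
The plan is to exploit the Hopf algebra structure to realize the dual concretely and then to read off its Drinfeld data. Since $L_q(\bm{\pi})$ is finite-dimensional and simple, its left dual $L_q(\bm{\pi})^*$, equipped with the action $(a\cdot f)(v)=f\big(S(a)v\big)$ through the antipode $S$, is again a finite-dimensional simple $U_q(\bL\fg)$-module of type $1$. By the classification recalled above it is therefore isomorphic to $L_q(\bm{\pi}')$ for a unique $\bm{\pi}'\in P_q^+$, and the entire task is to identify $\bm{\pi}'$ in terms of $\bm{\pi}$.

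First I would locate the $\ell$-highest weight line of $L_q(\bm{\pi})^*$. Under the canonical pairing this line is the annihilator of the span of all weight spaces of $L_q(\bm{\pi})$ other than the lowest one; equivalently it is dual to the lowest $\ell$-weight space. Thus computing $\bm{\pi}'$ amounts to (i) determining the lowest $\ell$-weight of $L_q(\bm{\pi})$ and (ii) tracking how $S$ transforms the eigenvalues of the commutative subalgebra $U_q(\bL\fh)$ on this line. For (i), I would pass from the highest to the lowest $\ell$-weight using the longest element: the classical part of the lowest weight is $w_0\,\wt(\bm{\pi})$, and since $w_0(\ga_i)=-\ga_{\bar i}$ this produces the diagram automorphism $i\mapsto\bar i$, so that the $i$-th component of the dual is governed by the $\bar i$-th component of $\bm{\pi}$.

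For (ii), I would apply $S$ to the generating series $\sum_{r\ge 0}\phi_{i,\pm r}^\pm u^{\pm r}$ and read off its effect on the $d_{i,m}$, hence on $\bm{\pi}_i^\pm(u)$. The antipode inverts $k_i$ and, crucially, shifts the spectral parameter by a fixed power of $q$; assembling this shift over the braid operators that relate the loop generators across the diagram yields a single global factor $q^{K}$, independent of $\bm{\pi}$, which gives exactly $\bm{\pi}^*=\big(\bm{\pi}_{\bar i}(q^K u)\big)_{i\in I}$.

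The hard part is step (ii): the antipode is not diagonal in the loop generators, so making the computation precise and extracting the spectral shift uniformly in $\bm{\pi}$ is delicate. This is precisely the content of \cite[Corollary 6.9]{MR1810773}, so in practice I would invoke that result and merely match conventions, identifying $\bar i$ with the permutation of $I$ induced by $-w_0$ and reading off $K$ from the normalization used there. Since no property of $K$ beyond its existence is used in the sequel, this suffices.
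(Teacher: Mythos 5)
Your proposal is correct and ultimately coincides with the paper's own treatment: the paper gives no independent argument but simply states that the lemma is a consequence of \cite[Corollary 6.9]{MR1810773}, which is exactly the result you invoke for the one genuinely delicate step (the uniform spectral shift $q^K$). Your preliminary reductions --- simplicity of the dual, identification of its $\ell$-highest weight line with the dual of the lowest $\ell$-weight space, and the appearance of $i\mapsto\bar i$ via $-w_0$ --- are a sound elaboration of why that corollary yields the stated form, so the two proofs are essentially the same.
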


In this paper, we are mainly interested in the following special simple modules.

\begin{Def}\normalfont
 Given $i \in I, \ell \in \Z_{> 0}$ and $a \in \C(q)$, define $\bm{\pi}_{i,\ell,a} \in P^+_q$ by 
 \[ (\bm{\pi}_{i,\ell,a})_j(u) = \begin{cases}
                  \prod_{k=0}^{\ell-1} (1-aq_i^{2k}u) & \text{if} \ j=i,\\
                  1               & \text{otherwise.}
                 \end{cases}
 \]
 The simple $U_q(\bL\fg)$-module $L_q(\bm{\pi}_{i,\ell,a})$ is called the \textit{Kirillov-Reshetikhin module} 
 (KR module for short) associated with $i,\ell,a$, and denoted by $W^{i,\ell}_q(a)$.
\end{Def}

We end this subsection by recalling the following lemma.

\begin{Lem}\label{Lem:commutativity_of_tensor}
 {\normalfont(i)} For $\bm{\pi},\bm{\pi}' \in P^+_q$, the simple $U_q(\bL\fg)$-module $L_q(\bm{\pi}\bm{\pi}')$ is isomorphic 
  to a quotient of  the $U_q(\bL\fg)$-submodule of $L_q(\bm{\pi}) \otimes L_q(\bm{\pi}')$ generated by the tensor product
  of $\ell$-highest weight vectors. In particular if $L_q(\bm{\pi})\otimes L_q(\bm{\pi}')$ is simple, then we have
  \[ L_q(\bm{\pi}) \otimes L_q(\bm{\pi}') \cong L_q(\bm{\pi}') \otimes L_q(\bm{\pi})
  \]
  as $U_q(\bL\fg)$-modules. \\
 {\normalfont(ii)} Let $W_q^{i_1,\ell_1}(a_1)$ and $W_q^{i_2,\ell_2}(a_2)$ be two KR modules, and assume that 
 $a_1 \notin q^{\Z}a_2$.
 Then we have
 \[ W_q^{i_1,\ell_1}(a_1) \otimes W_q^{i_2,\ell_2}(a_2) \cong W_q^{i_2,\ell_2}(a_2) \otimes W_q^{i_1,\ell_1}(a_1).
 \]
\end{Lem}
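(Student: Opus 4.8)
The plan is to deduce both parts from the interplay between the triangular decomposition \eqref{eq:q-triangular} and elementary weight considerations, proving (i) first and then reducing (ii) to it via a known irreducibility criterion.

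For (i), I would first show that $v := v_{\bm{\pi}} \otimes v_{\bm{\pi}'}$ is an $\ell$-highest weight vector of $\ell$-highest weight $\bm{\pi}\bm{\pi}'$. The raising relations $x_{i,r}^+ v = 0$ come essentially for free from weights: by \eqref{eq:l-highest} the vector $v$ has $U_q(\fg)$-weight $\wt(\bm{\pi}) + \wt(\bm{\pi}')$, and since every weight of $L_q(\bm{\pi}) \otimes L_q(\bm{\pi}')$ lies below $\wt(\bm{\pi}) + \wt(\bm{\pi}')$, attained only on $\C(q)\,v$, the weight-$(\wt(\bm{\pi})+\wt(\bm{\pi}')+\ga_i)$ vector $x_{i,r}^+ v$ must vanish. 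For the $U_q(\bL\fh)$-eigenvalues I would invoke the coproduct of the Drinfeld series $\sum_r \phi_{i,\pm r}^\pm u^{\pm r}$, which is grouplike up to correction terms lying in (strictly positive weight) $\otimes$ (strictly negative weight). The same weight argument kills all correction terms on $v$, so $\phi_i^\pm(u)$ acts on $v$ by the product of its eigenvalues on $v_{\bm{\pi}}$ and $v_{\bm{\pi}'}$; by \eqref{eq:l-highest} and the defining formula for $\bm{\pi}_i^\pm$ this product is exactly the eigenvalue attached to $\bm{\pi}_i \bm{\pi}'_i = (\bm{\pi}\bm{\pi}')_i$. Hence $U_q(\bL\fg)\,v$ is $\ell$-highest weight of weight $\bm{\pi}\bm{\pi}'$ and admits $L_q(\bm{\pi}\bm{\pi}')$ as its simple quotient, which is the first assertion.

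The second assertion of (i) then follows by a dimension count. If $L_q(\bm{\pi})\otimes L_q(\bm{\pi}')$ is simple, it is generated by $v$, hence isomorphic to $L_q(\bm{\pi}\bm{\pi}')$. Restricting to $U_q(\fg)$ the character ring is commutative, so $\dim\big(L_q(\bm{\pi}')\otimes L_q(\bm{\pi})\big) = \dim\big(L_q(\bm{\pi})\otimes L_q(\bm{\pi}')\big) = \dim L_q(\bm{\pi}\bm{\pi}')$. On the other hand $v_{\bm{\pi}'}\otimes v_{\bm{\pi}}$ generates a submodule $M \subseteq L_q(\bm{\pi}')\otimes L_q(\bm{\pi})$ that surjects onto $L_q(\bm{\pi}'\bm{\pi}) = L_q(\bm{\pi}\bm{\pi}')$, forcing $\dim M \ge \dim L_q(\bm{\pi}\bm{\pi}')$. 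Comparing with the dimension equality gives $M = L_q(\bm{\pi}')\otimes L_q(\bm{\pi})$ and that this surjection is an isomorphism, whence $L_q(\bm{\pi}')\otimes L_q(\bm{\pi}) \cong L_q(\bm{\pi}\bm{\pi}') \cong L_q(\bm{\pi})\otimes L_q(\bm{\pi}')$. Note that simplicity of the reversed order is thereby derived, not assumed.

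For (ii), I would reduce to (i) by showing that $W_q^{i_1,\ell_1}(a_1)\otimes W_q^{i_2,\ell_2}(a_2)$ is simple under the hypothesis $a_1 \notin q^{\Z} a_2$. This is precisely the generic-position regime in which the normalized $R$-matrix between the two KR modules has neither a zero nor a pole, so the tensor product is irreducible; here I would cite the standard irreducibility criterion for tensor products of $\ell$-highest weight modules whose spectral parameters are in general position. Granting simplicity, part (i) immediately yields the desired commutation isomorphism. The main obstacle is exactly this simplicity input: proving it from scratch requires control of the poles of the $R$-matrix (equivalently, of the resonance points of the two modules), which is the genuinely representation-theoretic step, whereas everything in (i) is weight bookkeeping once the coproduct of the $\phi$-series is granted.
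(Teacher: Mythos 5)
Your argument is correct in outline, and for part (i) it is essentially the standard proof hiding behind the paper's citation of Chari--Pressley: the weight argument for $x_{i,r}^+v=0$, the coproduct of the $\phi_i^\pm$-series modulo terms that die on a tensor product of $\ell$-highest weight vectors, and the dimension count deriving simplicity of the reversed-order product (using $\bm{\pi}\bm{\pi}'=\bm{\pi}'\bm{\pi}$ in the commutative monoid $P_q^+$) are all sound. Where you genuinely diverge from the paper is part (ii). You establish simplicity of $W_q^{i_1,\ell_1}(a_1)\otimes W_q^{i_2,\ell_2}(a_2)$ by appealing to the generic-position/$R$-matrix criterion, i.e.\ that the normalized $R$-matrix has no zero or pole when $a_1\notin q^{\Z}a_2$. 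The paper instead invokes Chari's braid-group criterion \cite[Theorem 6.1]{MR1883181} to conclude that the tensor product is $\ell$-highest weight, combines it with Lemma \ref{Lem:dual} (which identifies the dual as a tensor product of KR modules with parameters shifted by a power of $q$, hence still satisfying the hypothesis) to conclude that the dual is $\ell$-highest weight as well, and then uses the elementary fact that a module which is both cyclic and cocyclic on its one-dimensional top weight space is simple. The paper's route is softer: it needs no information about $R$-matrix denominators. Your route requires, as you acknowledge, a precise external input, and the one point demanding care is that the singular points of the normalized $R$-matrices between KR modules (in the normalization $\bm{\pi}_{i,\ell,a}$ used here) all lie in $q^{\Z}a_2/a_1$ --- otherwise the hypothesis $a_1\notin q^{\Z}a_2$ would not suffice. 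This is true but is exactly the kind of case-by-case denominator computation the paper's duality argument avoids; if you make the citation precise on this point, your proof goes through.
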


\begin{proof}
 The assertion (i) is proved in \cite{MR1300632, MR1357195}.
 It follows from \cite[Theorem 6.1]{MR1883181} and Lemma \ref{Lem:dual} that the module 
 $W_q^{i_1,\ell_1}(a_1) \otimes W_q^{i_2,\ell_2}(a_2)$ in (ii)
 and its dual are both $\ell$-highest weight, and hence simple.
 Now the assertion (ii) follows from (i), and the proof is complete.
\end{proof}

\subsection{Subalgebras \boldmath{$U_{q,i}$}}\label{Subsection:Subalgebras}

For $i \in I$, let $U_{q,i}$ be the $\C(q)$-subalgebra of $U_q(\bL\fg)$ generated by $x_{i,r}^{\pm}$ ($r\in \Z$) and 
$k_i^{\pm 1}$.
Denote by $U_{q}(\bL\mathfrak{sl}_2)_{d_i}$ the quantum loop algebra associated with $\hat{C}= \begin{pmatrix} 2 & -2 \\ -2 & 2
\end{pmatrix}$ and $\hat{D} = \begin{pmatrix} d_i & 0 \\ 0 & d_i \end{pmatrix}$.
By \cite[Proposition 3.8]{MR1301623}, there is a $\C(q)$-algebra isomorphism $\Psi_i\colon U_{q}(\bL\mathfrak{sl}_2)_{d_i}
\stackrel{\sim}{\to} U_{q,i}$ such that
\begin{equation}\label{eq:sl2-isom}
 \begin{split}
  \Psi_i(k_1^{\pm 1}) &= k_i^{\pm 1},\ \ \Psi_i(k_0^{\pm 1}) = k_i^{\mp 1},\ \ \Psi_i(X_1^{\pm}) = x_{i,0}^{\pm},\\
   \Psi_i(X_0^+) &= -o(i)k_i^{-1}x_{i,1}^-, \ \   \Psi_i(X_0^-)=-o(i)x_{i,-1}^+k_i.
 \end{split}
\end{equation}
It should be remarked that $U_{q,i}$ is not a sub-coalgebra, and hence $\Psi_i$ is not a coalgebra isomorphism.
The following lemma is needed later.

\begin{Lem}\label{Lem:restriction}
 Let $\bm{\pi}^1,\ldots, \bm{\pi}^p$ be a sequence of elements of $P_q^+$, and assume that $L_q(\bm{\pi}^1) \otimes
 \cdots \otimes L_q(\bm{\pi}^p)$ is $\ell$-highest weight.
 Then the $U_{q,i}$-submodule 
 \begin{equation*}
  U_{q,i}(v_{\bm{\pi}^1} \otimes \cdots \otimes v_{\bm{\pi}^p}) \subseteq L_q(\bm{\pi}^1) \otimes \cdots \otimes L_q(\bm{\pi}^p)
 \end{equation*}
 generated by the tensor product of $\ell$-highest weight vectors is isomorphic, as a $U_{q}(\bL\mathfrak{sl}_2)_{d_i}$-module, to 
  \begin{equation}\label{eq:Uqi-mod2}
   L_q\big(\bm{\pi}^1_i(u)\big) \otimes \cdots \otimes L_{q}\big(\bm{\pi}^p_i(u)\big).
  \end{equation}
\end{Lem}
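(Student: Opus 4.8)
The plan is to transport the whole statement through the algebra isomorphism $\Psi_i$ of \eqref{eq:sl2-isom} and reduce it to the theory of $\ell$-highest weight modules over $U_q(\bL\mathfrak{sl}_2)_{d_i}$. Set $M=L_q(\bm{\pi}^1)\otimes\cdots\otimes L_q(\bm{\pi}^p)$ and $v=v_{\bm{\pi}^1}\otimes\cdots\otimes v_{\bm{\pi}^p}$. First I would observe that the weight space of $M$ of weight $\sum_k\wt(\bm{\pi}^k)$ is one-dimensional and spanned by $v$, and that the $\ell$-highest weight vector of an $\ell$-highest weight module necessarily carries the top weight; since $M$ is $\ell$-highest weight by hypothesis, $v$ is, up to a scalar, its $\ell$-highest weight vector. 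Hence $x_{i,r}^+v=0$ for all $r\in\Z$ and $U_q(\bL\fh)v=\C(q)v$. Because the comultiplication of each $\phi_{i,m}^{\pm}$ is grouplike modulo terms annihilating tensors of $\ell$-highest weight vectors, the $\ell$-highest weight of $M$ is the product $\prod_k\bm{\pi}^k$; in particular the node-$i$ Drinfeld data of $v$ is $\prod_k\bm{\pi}_i^k(u)$. Transporting through $\Psi_i$, I conclude that $N:=U_{q,i}v$ is a cyclic $\ell$-highest weight $U_q(\bL\mathfrak{sl}_2)_{d_i}$-module whose Drinfeld polynomial coincides with that of the top vector $w:=v_{\bm{\pi}_i^1}\otimes\cdots\otimes v_{\bm{\pi}_i^p}$ of the target $T:=L_q(\bm{\pi}_i^1)\otimes\cdots\otimes L_q(\bm{\pi}_i^p)$.

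Next I would record the one-factor case $U_{q,i}v_{\bm{\pi}^k}\cong L_q(\bm{\pi}_i^k)$ (the instance $p=1$), which is the standard description of the $U_{q,i}$-module generated by the $\ell$-highest weight vector of a simple module and which I would quote from \cite{MR1301623,MR1357195}; it gives in particular $\dim T=\prod_k\dim U_{q,i}v_{\bm{\pi}^k}$. The heart of the proof is then the comparison of $N$ with $T$ at the level of the comultiplication. Writing $U_{q,i}$ through $\Psi_i$, its generators are $x_{i,0}^{\pm}=X_i^{\pm}$, $k_i^{\pm1}$ and $x_{i,1}^-,x_{i,-1}^+$. On $X_i^{\pm}$ and $k_i^{\pm1}$ the comultiplication of $U_q(\bL\fg)$ is the standard Chevalley one, which under $\Psi_i\otimes\Psi_i$ matches the $U_q(\bL\mathfrak{sl}_2)_{d_i}$-comultiplication of the corresponding generators. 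For $x_{i,\pm1}^{\mp}$ I would use $x_{i,1}^-=-[2]_{q_i}^{-1}[h_{i,1},x_{i,0}^-]$ together with the known comultiplication of the $h_{i,m}$ (equivalently of the $\phi_{i,m}^{\pm}$), so that $\Delta(x_{i,1}^-)$ equals a leading term, reproducing the $U_q(\bL\mathfrak{sl}_2)_{d_i}$-comultiplication of the corresponding generator, plus corrections $\sum p\otimes q$ in which one tensor factor lies in an augmentation ideal.

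The key computation, and the reason the statement holds despite $\Psi_i$ not being a coalgebra map, is a weight estimate on these corrections. Every vector of $\bigotimes_k U_{q,i}v_{\bm{\pi}^k}$ is a tensor of node-$i$ string vectors, i.e.\ of weights $\wt(\bm{\pi}^k)-m_k\ga_i$. When $\Delta^{(p-1)}(x_{i,r}^{\pm})$ is applied to such a tensor, each correction distributes a factor $q$ of nonzero weight onto some tensor slot; if the weight of $q$ involves any node $j\neq i$ with a positive coefficient it raises that slot above its highest weight in the $\ga_j$-direction and hence kills it, while if the weight of $q$ lies in $\Z\ga_i$ the factor is already a node-$i$ operator. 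Consequently the surviving part of $\Delta^{(p-1)}(x_{i,r}^{\pm})$ acts through $U_{q,i}^{\otimes p}$ and preserves $\bigotimes_k U_{q,i}v_{\bm{\pi}^k}$. I expect the verification that this surviving action agrees, under $\Psi_i$ and the single-factor identifications, with the genuine $U_q(\bL\mathfrak{sl}_2)_{d_i}$-comultiplication on $T$ to be the main obstacle, as it requires the precise coproduct formulas for the Drinfeld generators and careful tracking of the grouplike factors $k_i^{\pm1}$.

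Granting this, $\bigotimes_k U_{q,i}v_{\bm{\pi}^k}$ becomes a $U_q(\bL\mathfrak{sl}_2)_{d_i}$-module isomorphic to $T$, under which $N=U_{q,i}v$ is the submodule generated by $w$. Finally $T$ is generated by $w$: the cyclicity criterion for tensor products of $U_q(\bL\mathfrak{sl}_2)_{d_i}$-modules applies here precisely because $M$ is $\ell$-highest weight (this is where the hypothesis re-enters), so the cyclic submodule generated by $w$ is all of $T$. Hence $N\cong T$, which is the assertion. Alternatively the content of the last two paragraphs can be quoted wholesale from Chari's study of braid group actions on tensor products \cite{MR1883181}, of which this lemma is essentially a special case.
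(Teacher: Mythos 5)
Your overall strategy coincides with the paper's: identify $U_{q,i}v$ with $\bigotimes_k U_{q,i}v_{\bm{\pi}^k}$, identify each single factor $U_{q,i}v_{\bm{\pi}^k}$ with $L_q\big(\bm{\pi}^k_i(u)\big)$, and then reconcile the action defined through the coproduct of $U_q(\bL\fg)$ with the one defined through the coproduct of $U_q(\bL\mathfrak{sl}_2)_{d_i}$. For the first point the paper uses a short weight argument: since $M=U_q(\bL\fn_-)v$ by the triangular decomposition (this is where the $\ell$-highest-weight hypothesis enters, more directly than through the ``cyclicity criterion'' you invoke), one gets $U_{q,i}v=\bigoplus_{k\ge 0}M_{\gl-k\ga_i}=\bigotimes_k U_{q,i}v_{\bm{\pi}^k}$ with $\gl=\sum_k\wt(\bm{\pi}^k)$; for the other two points it simply quotes \cite[Lemmas 2.2 and 2.3]{MR1402568}.

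The genuine gap is in your from-scratch treatment of the coproduct comparison, which you yourself flag as the main obstacle. Your weight estimate correctly kills every correction term $p\otimes q$ in which some tensor factor has a weight with a strictly positive $\ga_j$-coefficient for $j\ne i$, but it does not dispose of corrections both of whose factors have weights in $\Z\ga_i$, and your assertion that such a factor ``is already a node-$i$ operator'' is false: a weight-zero element such as $h_{j,m}$ with $j\ne i$ (which enters once you write $x^-_{i,1}$ as a multiple of $[h_{i,1},x^-_{i,0}]$ and expand $\Delta(h_{i,1})$) lies outside $U_{q,i}$ and does not act as a scalar on the string $U_{q,i}v_{\bm{\pi}^k}$, since $[h_{j,m},x^-_{i,r}]=-\tfrac1m[mc_{ji}]_{q_j}x^-_{i,r+m}\ne 0$. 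So the ``surviving part'' of the coproduct is not a priori an element of $U_{q,i}\otimes U_{q,i}$, and showing that its action on $i$-strings nevertheless reproduces the $U_q(\bL\mathfrak{sl}_2)_{d_i}$-coproduct is precisely the content of \cite[Lemma 2.2]{MR1402568}, which the paper cites rather than reproves. Your fallback citation \cite{MR1883181} is not where this statement lives; replacing your sketch by the citation of \cite[Lemmas 2.2 and 2.3]{MR1402568} turns your argument into the paper's.
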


\begin{proof}
 Let $\gl = \sum_{k=1}^p \wt(\bm{\pi}^k) \in P^+$.
 Since $L_q(\bm{\pi}^1) \otimes \cdots \otimes L_q(\bm{\pi}^p)$ is $\ell$-highest weight, we easily see from (\ref{eq:q-triangular}) and
 the weight consideration that
 \begin{align}\label{eq:Uqi-mod1}
  U_{q,i}(v_{\bm{\pi}^1} \otimes \cdots \otimes v_{\bm{\pi}^p}) &= \bigoplus_{k \in \Z_{\ge 0}}
    \Big(L_q(\bm{\pi}^1)\otimes \cdots \otimes L_q(\bm{\pi}^p)\Big)_{\gl - k\ga_i}\nonumber\\ 
  &= \left(U_{q,i}v_{\bm{\pi}^1}\right) \otimes \cdots \otimes \left(U_{q,i}v_{\bm{\pi}^p}\right).
 \end{align}
  By \cite[Lemma 2.3]{MR1402568}, each $U_{q,i}v_{\bm{\pi}^k}$ is isomorphic to $L_{q}\big(\bm{\pi}^k_i(u)\big)$ as a 
  $U_q(\bL\mathfrak{sl}_2)_{d_i}$-module.
  Note that, the module (\ref{eq:Uqi-mod2}) is defined through the coproduct of $U_{q}(\bL\mathfrak{sl}_2)_{d_i}$,
  and hence it is not obvious that this is isomorphic to the module in (\ref{eq:Uqi-mod1}),
  which is defined through the coproduct of $U_q(\bL\fg)$. 
  However this is proved in [loc.\ cit., Lemma 2.2], and hence the lemma follows.
\end{proof}

\subsection{Classical limits}\label{Subsection:classical_limits}

Let $\mA$ be the local subring of $\C(q)$ defined by
\[ \mA = \left\{ f/g \bigm| f, g \in \C[q],\ g(1) \neq 0\right\}.
\]
An \textit{$\mA$-lattice} $L$ of a $\C(q)$-vector space $V$ is a free $\mA$-submodule such that 
$V \cong \C(q) \otimes_{\mA} L$. 
Let $U_{\mathcal{A}}(\bL\fg) \subseteq U_q(\bL\fg)$ be the $\mA$-subalgebra generated by $k_i^{\pm 1}, x_{i,r}^\pm$ ($i\in I, r \in \Z$), 
and define $U_{\mA}(\bL\fn_\pm)\subseteq U_q(\bL\fn_\pm)$, $U_{\mA}(\bL\fh)\subseteq U_q(\bL\fh)$
and $U_{\mA,i}\subseteq U_{q,i}$ ($i\in I$) by the $\mA$-subalgebras generated by the given generators of the respective 
$\C(q)$-algebras.
It is easily seen from the defining relations that $h_{i,m} \in U_{\mA}(\bL\fg)$,
and then it is proved that
\begin{equation}\label{eq:A-triangular}
 U_{\mA}(\bL\fg) = U_{\mA}(\bL\fn_-)U_{\mA}(\bL\fh)U_{\mA}(\bL\fn_+). 
\end{equation}

\begin{Lem}\label{Lem:A-form}
 The $\mA$-subalgebra $U_{\mA}(\bL\fg)$ coincides with the $\mA$-subalgebra generated by $k_i^{\pm 1}, X_i^{\pm}$ {\normalfont($i\in \hI$)}.
 In particular, $U_{\mA}(\bL\fg)$ is an $\mA$-lattice \cite{MR1066560} and a sub-coalgebra.
\end{Lem}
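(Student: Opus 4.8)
Write $\mU'$ for the $\mA$-subalgebra of $U_q(\bL\fg)$ generated by the affine Chevalley data $k_i^{\pm 1}, X_i^{\pm}$ ($i \in \hI$); the goal is to prove $\mU' = U_{\mA}(\bL\fg)$. The plan rests on one elementary observation and one structural input. The elementary observation is that every $q$-integer $[n]_{q_i} = (q_i^n - q_i^{-n})/(q_i - q_i^{-1})$ specializes to $n$ at $q=1$, so each $[n]_{q_i}!$ is a unit of $\mA$; consequently every divided power $(X_i^{\pm})^{n}/[n]_{q_i}!$ already lies in $\mU'$, and likewise the integral Cartan elements $\begin{bmatrix} k_i; c\\ n\end{bmatrix}$ lie in the subalgebra generated by the $k_i^{\pm 1}$. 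Hence $\mU'$ is exactly the base change to $\mA$ of Lusztig's integral form. The structural input is Lusztig's theorem \cite{MR1227098} that this integral form is stable under every braid operator $T_i$ ($i \in \hI$) and $T_\tau$ ($\tau \in \gG$), and under their inverses; therefore $T_w$ preserves $\mU'$ for all $w \in \wti W$.

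The inclusion $U_{\mA}(\bL\fg) \subseteq \mU'$ is now immediate from \eqref{eq:isom}: since $x_{i,m}^{\pm} = o(i)^m T_{\varpi_i^\vee}^{\mp m}(X_i^{\pm})$ and $T_{\varpi_i^\vee}$ is, by its definition, a product of the operators $T_\tau$ and $T_j$ that preserve $\mU'$, each $x_{i,m}^{\pm}$ lies in $\mU'$; together with $k_i^{\pm 1} \in \mU'$ this shows all generators of $U_{\mA}(\bL\fg)$ belong to $\mU'$. For the reverse inclusion it suffices to check that the generators of $\mU'$ lie in $U_{\mA}(\bL\fg)$. For $i \in I$ this is clear, as $X_i^{\pm} = x_{i,0}^{\pm}$ and $k_i^{\pm 1}$ are among the defining generators, while $k_0^{\pm 1}$ is expressed through the $k_i$ ($i \in I$) using $k_\gd = 1$. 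The only remaining generators are $X_0^{\pm}$, the real root vectors attached to $\pm\ga_0 = \pm(\gd - \gt)$. I would obtain these as explicit iterated $q$-brackets of the degree-$0$ and degree-$(\mp 1)$ Drinfeld generators, following the construction of the highest-root vector used in the comparison of the two presentations in \cite{MR1066560}; since all structure constants entering such brackets are products of the scalars $q^{\pm c_{ij}}$ and the units $[m]_{q_i}$, no denominator vanishing at $q=1$ appears, so $X_0^{\pm} \in U_{\mA}(\bL\fg)$.

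Verifying this last integrality of $X_0^{\pm}$ in the Drinfeld generators is the main obstacle, since for general $\fg$ (for instance when $\gG$ is trivial) one cannot simply transport $X_0^{\pm}$ from a finite node by a braid automorphism, and one must instead track that the passage from the Chevalley generator to the Drinfeld realization stays integral over $\mA$. Granting it, the two inclusions give $U_{\mA}(\bL\fg) = \mU'$. The stated consequences follow: $\mU'$ is the base-changed Lusztig form, which is free over $\mA$ with a PBW-type basis \cite{MR1066560}, hence an $\mA$-lattice; and it is a sub-coalgebra because the comultiplication of \cite{MR1227098} carries each divided power $(X_i^{\pm})^{n}/[n]_{q_i}!$ and each $k_i^{\pm 1}$ into $\mU' \otimes_{\mA} \mU'$.
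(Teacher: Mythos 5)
Your first inclusion $U_{\mA}(\bL\fg)\subseteq \wti{U}_{\mA}(\bL\fg)$ is fine and is exactly the paper's argument: since the $q$-integers are units of $\mA$, the Chevalley-generated $\mA$-form is the base change of Lusztig's integral form, hence stable under all $T_w^{\pm 1}$, and (\ref{eq:isom}) then places every $x_{i,m}^{\pm}$ inside it.

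The reverse inclusion, however, reduces entirely to showing $X_0^{\pm}\in U_{\mA}(\bL\fg)$, and this is precisely the point you do not prove: you gesture at ``iterated $q$-brackets of Drinfeld generators'' and then write ``Granting it''. Since everything else is formal, this is the whole content of the lemma, so the proposal has a genuine gap. Moreover, your stated reason for rejecting the braid-transport route (that one cannot carry $X_0^{\pm}$ to a finite node by a braid automorphism, e.g.\ when $\gG$ is trivial) is mistaken: $\ga_0=\gd-\gt$ is a long real root and is therefore conjugate under the affine Weyl group itself --- no diagram automorphism is needed --- to some finite simple root $\ga_i$, and the paper obtains $X_0^{\pm}=T_w(X_i^{\pm})$ for such a $w\in\wti{W}$. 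What makes this legitimate is the technical step you skip: one must first prove that every $T_w^{\pm 1}$ ($w\in\wti{W}$) preserves the \emph{Drinfeld-generated} form $U_{\mA}(\bL\fg)$, not merely the Chevalley-generated one. The paper does this by reducing to $T_{\varpi_i^\vee}^{\pm 1}$ and $T_i^{\pm 1}$, treating $T_i^{\pm 1}(x_{j,r}^{\pm})$ for $j\neq i$ via the commutation $T_iT_{\varpi_j^\vee}=T_{\varpi_j^\vee}T_i$, and for $j=i$ via the embedding $\Psi_i\colon U_{q}(\bL\mathfrak{sl}_2)_{d_i}\to U_{q,i}$, whose explicit formulas (\ref{eq:sl2-isom}) show that it carries the rank-one Chevalley $\mA$-form into $U_{\mA}(\bL\fg)$. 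Either this argument, or a genuinely carried-out integral bracket construction of $X_0^{\pm}$ in the Drinfeld generators (for which the relevant source is \cite{MR1301623} rather than \cite{MR1066560}), is required before the two inclusions can be combined.
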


\begin{proof}
 Let $\wti{U}_{\mA}(\bL\fg)$ denote the $\mA$-subalgebra generated by $k_i^{\pm 1}, X_i^{\pm}$ ($i \in \hI$).
 Since $T_w^{\pm 1}$ ($w \in \wti{W}$) preserve $\widetilde{U}_{\mA}(\bL\fg)$, the containment
 $U_{\mA}(\bL\fg) \subseteq \wti{U}_{\mA}(\bL\fg)$ follows from (\ref{eq:isom}).
 To show the opposite containment, we shall prove first that $T_w^{\pm 1}$ ($w \in \wti{W}$) also preserve $U_{\mA}(\bL\fg)$,
 which is equivalent to that $T_{\varpi_i^{\vee}}^{\pm 1}$ 
 and $T_i^{\pm 1}$ ($i\in I$) preserve $U_{\mA}(\bL\fg)$ by \cite[Lemma 2.8]{MR991016}. 
 The former is easily proved from the fact 
 $T_{\varpi_i^\vee}(x_{j,r}^{\pm}) = x_{j,r}^{\pm}$ ($i \neq j$) \cite[Corollary 3.2]{MR1301623}.
 To show the latter, we need to prove that $T_i^{\pm 1}(x_{j,r}^+) \in U_\mA(\bL\fg)$ and $T_i^{\pm 1}(x_{j,r}^-) \in U_{\mA}(\bL\fg)$ for all 
 $i, j \in I$ and $r\in\Z$.
 We prove the first assertion (the other is similarly proved).
 If $i \neq j$, then 
 \[ T_i^{\pm 1} (x_{j,r}^+) = o(j)^rT_i^{\pm 1}T_{\varpi_j^\vee}^{-r}(X_j^+) = o(j)^rT_{\varpi_j^\vee}^{-r}T_i^{\pm 1}(X_j^+) 
 \]
 (see \cite[Lemma 2.2]{MR991016}), which belongs to $U_{\mA}(\bL\fg)$ since $T_i^{\pm 1}(X_j^+)$
 is contained in the $\mA$-subalgebra generated by $k_a^{\pm 1}$ and $X_a^\pm$ ($a \in I$). 
 Assume that $i=j$, and consider the isomorphism $\Psi_i\colon U_{q}(\bL\mathfrak{sl}_2)_{d_i} \to U_{q,i}$. 
 By \cite[Corollary 3.8]{MR1301623}, we have $\Psi_i \circ T_1 = T_i \circ \Psi_i$ and $\Psi_i \circ T_{\varpi_1^\vee} = T_{\varpi_i^\vee}\circ
 \Psi_i$. 
 Hence it follows that 
 \[ T_i^{\pm 1}(x_{i,r}^+) = o(i)^r\Psi_i\!\left(T_1^{\pm 1}T_{\varpi_1^\vee}^{-r} (X_1^+)\right) \in \Psi_i\!\left(
    \wti{U}_\mA(\bL\mathfrak{sl}_2)_{d_i}\right).
 \]
 It is easily seen from (\ref{eq:sl2-isom}) that $\Psi_i$ maps $\wti{U}_\mA(\bL\mathfrak{sl}_2)_{d_i}$ into $U_\mA(\bL\fg)$,
 and hence we have $T_i^{\pm 1}(x_{i,r}^+) \in U_\mA(\bL\fg)$, as required.

 Let $w \in \wti{W}$ and $i \in I$ be such that $w(\ga_i) = \ga_0$. 
 By \cite{MR1227098}, we have $T_w(X_i^\pm) = X_0^\pm$,
 and hence $X_0^\pm \in U_\mA(\bL\fg)$ follows from the assertion proved above. 
 This implies $U_{\mA}(\bL\fg) \supseteq \wti{U}_{\mA}(\bL\fg)$, and the proof is complete.
\end{proof}

Let $P^+_{\mA}$ be the submonoid of $P^+_q$ consisting of $\bm{\pi} = \big(\bm{\pi}_1(u),\ldots,\bm{\pi}_n(u)\big)$ such that
$\bm{\pi}_i^\pm(u) \in \mA[u]$ for all $i \in I$.
The following lemma in simply-laced or classical types follows from \cite{MR1850556,MR1836791},
and the proof can be extended to general types.
For completeness we give a more elementary proof here
(in the papers cited above the assertion is proved over $\C[q,q^{-1}]$, and therefore the proof is more involved).

\begin{Lem}\label{Lem:A-lattice}
 Assume that $V$ is a finite-dimensional $\ell$-highest weight $U_q(\bL\fg)$-module 
 with $\ell$-highest weight $\bm{\pi} \in P^+_{\mA}$, and let $v_{\bm{\pi}}$ be an $\ell$-highest weight vector. 
 Then the $U_\mA(\bL\fg)$-submodule $L = U_{\mA}(\bL\fg)v_{\bm{\pi}} \subseteq V$ is an $\mA$-lattice of $V$.
\end{Lem}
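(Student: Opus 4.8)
The plan is to exploit that $\mA$ is a discrete valuation ring (the localization of $\C[q]$ at $(q-1)$), so that a finitely generated torsion-free $\mA$-module is automatically free; consequently it suffices to prove that $L$ spans $V$ over $\C(q)$ and that $L$ is finitely generated over $\mA$. The spanning is immediate: the $\C(q)$-algebra generators $k_i^{\pm1},x_{i,r}^{\pm}$ of $U_q(\bL\fg)$ all lie in $U_\mA(\bL\fg)$, so $\C(q)\cdot U_\mA(\bL\fg)=U_q(\bL\fg)$ and hence $\C(q)\cdot L=U_q(\bL\fg)v_{\bm\pi}=V$. Granting finite generation, $L$ is then free of rank $\dim_{\C(q)}V$ and the natural map $\C(q)\otimes_\mA L\to V$ is an isomorphism, which is exactly the lattice condition.

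For the finite generation I would first peel off the $\bL\fh$ and $\bL\fn_+$ directions using the integral triangular decomposition $(\ref{eq:A-triangular})$, writing $L=U_\mA(\bL\fn_-)U_\mA(\bL\fh)U_\mA(\bL\fn_+)v_{\bm\pi}$. Since $v_{\bm\pi}$ is $\ell$-highest weight, $U_\mA(\bL\fn_+)v_{\bm\pi}=\mA v_{\bm\pi}$, and the generators of $U_\mA(\bL\fh)$ act on $v_{\bm\pi}$ by scalars in $\mA$: this is clear for $k_i^{\pm1}$, while for $h_{i,m}$ the eigenvalue is $[m]_{q_i}d_{i,m}$, where $d_{i,m}=\tfrac1m p_m$ with $p_m=\sum_j a_j^m$ the power sum of the numbers $a_j$ in the factorization $\bm\pi_i^{\pm}(u)=\prod_j(1-a_ju)$. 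The hypothesis $\bm\pi\in P^+_\mA$ makes every elementary symmetric function of the $a_j$ lie in $\mA$, so $p_m\in\mA$ by Newton's identities, and $\tfrac1m[m]_{q_i}\in\mA$ (it equals $1$ at $q=1$); hence $[m]_{q_i}d_{i,m}\in\mA$ and $U_\mA(\bL\fh)v_{\bm\pi}=\mA v_{\bm\pi}$. The problem thus reduces to showing $L=U_\mA(\bL\fn_-)v_{\bm\pi}$ is finitely generated. As $L$ is graded by the finitely many $\fh$-weights of $V$, it suffices to treat one weight space $L_\gm=L\cap V_\gm$ at a time; and a monomial $x_{i_1,r_1}^-\cdots x_{i_k,r_k}^-v_{\bm\pi}$ lands in $V_\gm$ only when $\sum_j\ga_{i_j}=\wt(\bm\pi)-\gm$ is a fixed element of $Q^+$, so that the length $k=\het(\wt(\bm\pi)-\gm)$ and the multiset $\{i_j\}$ are fixed and only the loop indices $r_j\in\Z$ vary.

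The crux, and the step I expect to be the main obstacle, is to show that within the finite-dimensional space $V_\gm$ only finitely many such monomials are needed over $\mA$, i.e.\ that the loop indices $r_j$ can be confined to a bounded window. The engine is the relation $[h_{i,m},x_{i,r}^-]=-\tfrac1m[2m]_{q_i}x_{i,r+m}^-$, whose inverted coefficient $-\tfrac{m}{[2m]_{q_i}}$ again lies in $\mA$ (its value at $q=1$ is $\tfrac12$); combined with the Serre/Drinfeld relations and with $\dim_{\C(q)}V_\gm<\infty$, this should force each $x$-string to obey an integral recurrence, so that monomials with some $r_j$ outside a fixed range become $\mA$-combinations of monomials lying inside it. The delicate point is bookkeeping: one must verify that \emph{every} coefficient produced in this reduction remains in $\mA$ and that the process terminates using the weight-boundedness of $V$. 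Once a finite $\mA$-spanning set of each $L_\gm$ is obtained, finite generation of $L$ follows; equivalently, one may phrase the conclusion by exhibiting a finitely generated $\mA$-module containing $L$ and invoking that $\mA$ is Noetherian, so that the $\mA$-submodule $L$ is itself finitely generated.
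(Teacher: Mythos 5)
Your overall architecture coincides with the paper's: reduce to $L=U_{\mA}(\bL\fn_-)v_{\bm{\pi}}$ via the integral triangular decomposition (\ref{eq:A-triangular}) and the $\ell$-highest weight relations (\ref{eq:l-highest}), then prove finite generation over $\mA$ by confining the loop indices of monomials $x^-_{i_1,k_1}\cdots x^-_{i_p,k_p}v_{\bm{\pi}}$ to a bounded window, and finish by torsion-freeness over the local PID $\mA$. Your verification that $U_{\mA}(\bL\fh)$ acts on $v_{\bm{\pi}}$ by scalars in $\mA$ (Newton's identities from $\bm{\pi}\in P^+_{\mA}$) is correct and is more detail than the paper records.

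However, the step you yourself flag as ``the main obstacle'' is a genuine gap, and the engine you propose for it would not close it. Finite-dimensionality of $V_\mu$ over $\C(q)$ only yields $\C(q)$-linear dependencies; to eliminate a monomial with a large loop index you need a relation in which that monomial carries a coefficient that is a \emph{unit} in $\mA$, and nothing in your argument produces this (a relation such as $(q-1)\,x^-_{i,k}v_{\bm{\pi}}=x^-_{i,0}v_{\bm{\pi}}$ is perfectly compatible with finite-dimensionality but useless for the reduction, since $(q-1)^{-1}\notin\mA$). Likewise, the commutator $[h_{i,m},x^-_{i,r}]=-\tfrac1m[2m]_{q_i}x^-_{i,r+m}$ expresses $x^-_{i,r+m}v_{\bm{\pi}}$ in terms of $h_{i,m}x^-_{i,r}v_{\bm{\pi}}$, which lies in $L$ but is not visibly an $\mA$-combination of a fixed finite set --- the argument becomes circular. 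The paper supplies the missing unit-leading-coefficient recurrence from two specific sources: for a single factor ($p=1$), \cite[Proposition 4.3]{MR1850556} gives a linear recurrence for $x^-_{i,k}v_{\bm{\pi}}$ whose coefficients are essentially those of $\bm{\pi}_i^{\pm}(u)\in\mA[u]$ (this is exactly where the hypothesis $\bm{\pi}\in P^+_{\mA}$ enters, with constant term $1$ making the recurrence invertible over $\mA$); for $p>1$, the quadratic Drinfeld relation
$x^-_{i_1,k_1}x^-_{i_2,k_2}=q_{i_1}^{-c_{i_1i_2}}x^-_{i_2,k_2}x^-_{i_1,k_1}+q_{i_1}^{-c_{i_1i_2}}x^-_{i_1,k_1-1}x^-_{i_2,k_2+1}-x^-_{i_2,k_2+1}x^-_{i_1,k_1-1}$,
whose coefficients are units in $\mA$, propagates the bound by a double induction on $p$ and on $k_1$, with the window $|k_j|<N+p$ where $N=\max_i\deg\bm{\pi}_i$. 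Without these two inputs your reduction does not terminate with coefficients in $\mA$, so the proof is incomplete as written.
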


\begin{proof}
 By (\ref{eq:l-highest}) and (\ref{eq:A-triangular}), we have $L = U_{\mA}(\bL\fn_-)v_{\bm{\pi}}$.
 Let $N = \max\{\deg \bm{\pi}_i(u)\mid i \in I\}$. 
 We first show the claim that any vector of the form
 \begin{equation}\label{eq:vector_of_the_form}
  x_{i_1,k_1}^-x_{i_2,k_2}^-\cdots x_{i_p,k_p}^-v_{\bm{\pi}} 
 \end{equation}
 can be written as an $\mA$-linear combination of vectors
 \[ x_{i_1',k_1'}^-x_{i_2',k_2'}^-\cdots x_{i_p',k_p'}^-v_{\bm{\pi}}, \ \ \ |k_j'| < N + p.
 \]
 We proceed by the induction on $p$. The case $p=1$ follows from \cite[Proposition 4.3]{MR1850556}.
 Let $p >1$, and assume that $k_1 \geq 0$ (the case $k_1 < 0$ is similarly proved).
 By the induction hypothesis we may assume that $|k_j| < N + p-1$ for $2\leq j \leq p$. Then using the relation
 \[ x_{i_1,k_1}^-x_{i_{2},k_2}^- = q_{i_1}^{-c_{i_1i_2}}x_{i_{2},k_2}^-x_{i_1,k_1}^- + q_{i_1}^{-c_{i_1i_2}}x_{i_1,k_1-1}^-x_{i_{2},k_2+1}^--
    x_{i_{2},k_2+1}^-x_{i_1,k_1-1}^-,
 \]
 and applying the induction hypothesis on $p$ again, we easily see that the claim is proved by the induction on $k_1$.

 Since $V$ is finite-dimensional, vectors of the form (\ref{eq:vector_of_the_form}) are $0$ when $p$ is sufficiently large.
 Hence it follows from the claim that $L$ is finitely generated as an $\mA$-module.
 Since $L$ is obviously torsion-free, the lemma follows.
\end{proof}

Denote by $\overline{\phantom{q} }\colon \mA \to \C$ the $\C$-algebra homomorphism defined by $\ol{q} = 1$.
Given an $\mA$-module $M$, denote by $M_\C$ the $\C$-vector space $\C \otimes_{\mA} M$ where $\mA$ acts on $\C$ via $\ol{\phantom{q}}$.
For an element $v \in M$, write $\ol{v} = 1 \otimes v \in M_\C$. 
$U_{\mA}(\bL\fg)_\C$ has a natural $\C$-algebra structure, and there exists a surjective $\C$-algebra homomorphism 
(see \cite[Proposition 9.2.3]{MR1300632})
\[ U_\mA(\bL\fg)_\C \to U(\bL\fg)\colon \ol{x_{i,r}^+} \mapsto e_i \otimes t^r, \ \ \ol{x_{i,r}^-} \mapsto f_i \otimes t^r, \ \ 
   \ol{k_i^{\pm 1}} \mapsto 1,\ \ \ol{h_{i,m}} \mapsto h_i \otimes t^m,
\]
whose kernel is the ideal of $U_{\mA}(\bL\fg)_\C$ generated by $\ol{k_i}-1$ ($i \in I$).
Assume that $V$ is a finite-dimensional $\ell$-highest weight $U_q(\bL\fg)$-module with $\ell$-highest weight $\bm{\pi} \in P_{\mA}^+$
and $\ell$-highest weight vector $v_{\bm{\pi}}$, and set $L = U_{\mA}(\bL\fg)v_{\bm{\pi}} \subseteq V$.
Then through the algebra homomorphism $L_\C$ becomes an $\bL\fg$-module,
which is called the \textit{classical limit} of $V$ and denoted by $\ol{V}$.
By Lemma \ref{Lem:A-lattice} we have $\dim_{\C(q)} V = \dim_\C \ol{V}$,
and it is also easy to see that 
\[ \Big[ V : V_q(\gl) \Big]_{U_q(\fg)} = \Big[ \ol{V} : V(\gl) \Big]_{\fg}
\]
holds for all $\gl \in P^+$ (see \cite[Subsection 3.4]{MR3120578}, for example), 
where the left- and right-hand sides denote the multiplicities as $U_q(\fg)$- and $\fg$-modules, respectively.

The following $\fg[t]$-modules are introduced in \cite{MR1836791,MR2238884}.

\begin{Def}\normalfont
 For $i \in I$, $\ell \in \Z_{> 0}$ and $c\in \C$, let $W^{i,\ell}(c)$ be a $\fg[t]$-module generated by a vector $v=v_{i,\ell,c}$ 
 with relations
 \begin{equation}\label{eq:def_rel}
  \begin{split}
   \fn_+[t]v=&\,0, \ \ \big(h\otimes f(t)\big) v = \ell f(c)\langle h, \varpi_i\rangle v \ \text{for} \ h\in \fh, \ 
   f(t) \in \C[t],\\ 
   f_i^{\ell+1}v&=\big(f_i\otimes (t-c)\big)v =0,\ \ f_jv=0 \ \text{for} \ j \in I \setminus\{i\}.
  \end{split}
 \end{equation}
When $c = 0$, we simply write $W^{i,\ell}=W^{i,\ell}(c)$.
\end{Def}
Note that $\varphi_c^*\big(W^{i,\ell}(c')\big) = W^{i,\ell}(c+c')$.
It is easily seen that, for sufficiently large $N$, $\fg \otimes (t-c)^N\C[t]$ acts trivially on $W^{i,\ell}(c)$.
Hence when $c \neq 0$, by considering the Lie algebra homomorphism 
\begin{equation}\label{eq:Laurent_extension}
 \bL\fg \twoheadrightarrow \fg \otimes \left(\C[[t-c]]/(t-c)^N\C[[t-c]]\right) \cong \fg \otimes \left(\C[t]/(t-c)^N\C[t]\right)
\end{equation}
induced by the Taylor expansion, 
$W^{i,\ell}(c)$ is uniquely lifted to an $\bL\fg$-module.

\begin{Prop}\label{Prop:classical_limit_of_W}
 Let $i \in I, \ell \in \Z_{> 0}$ and $a \in \mA^\times$.
 The classical limit $\ol{W^{i,\ell}_q(a)}$ of the KR module $W^{i,\ell}_q(a)$ is isomorphic to $W^{i,\ell}(\ol{a})$ as an $\bL\fg$-module.
\end{Prop}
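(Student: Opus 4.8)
The plan is to identify the classical limit $\ol{W^{i,\ell}_q(a)}$ as a quotient of $W^{i,\ell}(\ol a)$, and then to establish the reverse inequality on dimensions so that the quotient map is in fact an isomorphism. Concretely, I would proceed in three steps.

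First I would show that $\ol{W^{i,\ell}_q(a)}$ satisfies the defining relations of $W^{i,\ell}(\ol a)$, so that there is a surjective $\bL\fg$-module homomorphism $W^{i,\ell}(\ol a) \twoheadrightarrow \ol{W^{i,\ell}_q(a)}$. Let $v_{\bm\pi}$ be the $\ell$-highest weight vector of $W^{i,\ell}_q(a)$ and $\ol{v_{\bm\pi}}$ its image in the classical limit. The relations $\fn_+[t]\,\ol{v_{\bm\pi}} = 0$ and $f_j\,\ol{v_{\bm\pi}} = 0$ for $j \neq i$ follow directly from the $\ell$-highest weight condition $x_{j,r}^+ v_{\bm\pi} = 0$ together with the specialization $\ol{x_{j,r}^\pm} \mapsto e_j \otimes t^r,\ f_j \otimes t^r$. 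For the Cartan-type relation $\big(h \otimes f(t)\big)\ol{v_{\bm\pi}} = \ell f(\ol a)\langle h,\varpi_i\rangle\,\ol{v_{\bm\pi}}$, I would compute the eigenvalues $d_{i,m}$ from the explicit form of $\bm\pi_{i,\ell,a}$ via $\mathrm{exp}\big(-\sum_m d_{i,\pm m}u^m\big) = \bm\pi_i^\pm(u)$, specialize $q \to 1$ using $\ol{h_{i,m}} \mapsto h_i \otimes t^m$, and check that the resulting generating function matches evaluation at $\ol a$; the remaining generators $h_j \otimes t^m$ for $j \neq i$ act by zero since $\bm\pi_j = 1$. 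The relation $\big(f_i \otimes (t - \ol a)\big)\ol{v_{\bm\pi}} = 0$ is the subtle one: it should be extracted from the $\sl_2$-reduction, using Lemma \ref{Lem:restriction} (with $p=1$) to identify $U_{q,i}v_{\bm\pi}$ with the corresponding KR module over $U_q(\bL\mathfrak{sl}_2)_{d_i}$, where the single root-vector relation characterizing the $\sl_2$ KR module specializes to the claimed one. Finally $f_i^{\ell+1}\ol{v_{\bm\pi}} = 0$ follows because $\ol{W^{i,\ell}_q(a)}$ has the same $\fg$-character as $W^{i,\ell}_q(a)$, whose top $U_q(\fg)$-component is $V_q(\ell\varpi_i)$.

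Second, having the surjection, I would compare dimensions. On one side, Lemma \ref{Lem:A-lattice} gives $\dim_\C \ol{W^{i,\ell}_q(a)} = \dim_{\C(q)} W^{i,\ell}_q(a)$, and the decomposition multiplicities $\big[W^{i,\ell}_q(a) : V_q(\gl)\big]$ equal $\big[\ol{W^{i,\ell}_q(a)} : V(\gl)\big]$ as noted after Lemma \ref{Lem:A-lattice}; these are the known KR decomposition numbers. On the other side, I would invoke the fact from \cite{MR1836791,MR2238884} that $W^{i,\ell}(\ol a)$ (equivalently $\varphi_{\ol a}^* W^{i,\ell}$ when $\ol a \neq 0$, or $W^{i,\ell}$ when $\ol a = 0$) has exactly the same $\fg$-character. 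Since a surjection of finite-dimensional $\fg$-modules that is an equality on each isotypic component must be an isomorphism, the map is bijective.

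The main obstacle I expect is the $\ell$-th root-vector relation $\big(f_i \otimes (t-\ol a)\big)\ol{v_{\bm\pi}} = 0$, because it cannot be read off termwise from the specialization map — it genuinely encodes the Drinfeld polynomial data. The cleanest route is the $\sl_2$-reduction via $\Psi_i$ and Lemma \ref{Lem:restriction}: reduce to $\fg = \sl_2$, where $\bm\pi_{i,\ell,a}$ becomes the Drinfeld polynomial $\prod_{k=0}^{\ell-1}(1 - aq^{2k}u)$, and use that the classical limit of the $\sl_2$ KR module is well understood. One must be careful that the commutativity between taking the $\mA$-lattice $U_{q,i}v_{\bm\pi}$ and restricting along $\Psi_i$ behaves well under specialization — this is where the hypothesis $a \in \mA^\times$ (so that $\ol a \in \C^\times$ and $\bm\pi_i^\pm(u) \in \mA[u]$, placing $\bm\pi$ in $P^+_{\mA}$) is used to guarantee that the lattice and its limit are as expected. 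Once this single relation is secured, the rest of the argument is a routine character-matching and surjectivity-plus-dimension-count.
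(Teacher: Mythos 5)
Your proposal is correct and follows essentially the same route as the paper: construct a surjection $W^{i,\ell}(\ol a)\twoheadrightarrow \ol{W^{i,\ell}_q(a)}$ by verifying that the classical limit of the $\ell$-highest weight vector satisfies the defining relations (the paper delegates exactly this to \cite[Lemmas 2.3 and 2.4]{MR1836791}), and then conclude by equality of dimensions. The one caveat is your attribution for the dimension count: in general (exceptional) types the identity $\dim_\C W^{i,\ell}(\ol a)=\dim_{\C(q)}W^{i,\ell}_q(a)$ is not available from \cite{MR1836791,MR2238884} alone, and the paper instead deduces it as the single-factor case of \cite[Corollary 5.1]{MR2767945}.
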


\begin{proof}
 It is easy to check that the relations (\ref{eq:def_rel}) with $\C[t]$ replaced by $\C[t,t^{-1}]$ are the defining relations of $W^{i,\ell}(c)$
 ($c \neq 0$) as an $\bL\fg$-module.
 Then the existence of a surjection $W^{i,\ell}(\ol{a}) \twoheadrightarrow \ol{W^{i,\ell}_q(a)}$ 
 is proved as in \cite[Lemmas 2.3 and 2.4]{MR1836791}. Hence it suffices to show that $\dim_{\C(q)} W^{i,\ell}_q(a) = \dim_\C 
 W^{i,\ell}(\ol{a})$.
 This is proved in \cite{MR1836791} for classical types,
 and deduced in general types as the special case of a single tensor factor of \cite[Corollary 5.1]{MR2767945}.
\end{proof}

By the proposition, the pull-back $\varphi_{-\ol{a}}^*\Big(\,\ol{W_q^{i,\ell}(a)}\,\Big)$ is isomorphic to $W^{i,\ell}$,
which has a graded $\fg[t]$-module structure.
In this reason, $\varphi_{-\ol{a}}^*\Big(\,\ol{W_q^{i,\ell}(a)}\,\Big)$ is called the \textit{graded limit} of $W_q^{i,\ell}(a)$.

We end this subsection with recalling a theorem in the case $\fg = \mathfrak{sl}_2$.
In this case $I = \{1\}$ is a singleton set, and therefore we write $e,h,f$ for $e_1,h_1,f_1$, $W^\ell(c)$ for $W^{1,\ell}(c)$,
and $W^\ell$ for $W^{1,\ell}$.
Note that $W^{\ell}(c)$ is just the pull-back of the $(\ell+1)$-dimensional simple $\sl_2$-module with respect to the evaluation map at $t=c$:
\[ \sl_2[t] \to \sl_2 \colon x \otimes f(t) \mapsto f(c)x \ \ \ \text{for} \ x \in \sl_2, \ f(t) \in \C[t].
\]
The following is one of the main results in \cite{MR1988973} (see also \cite[Section 6]{MR3296163}).

\begin{Thm}\label{Thm:MR1988973}
 Assume that $\fg = \sl_2$, 
 and define a power series $F(z) \in U(\sl_2[t])[[z^{-1}]]$ in an indeterminate $z$ by
 \[ F(z) = \sum_{k =0}^\infty (f \otimes t^k) z^{-k-1}.
 \]
 For a sequence $\ell_1,\ldots,\ell_p$ of positive integers, 
 the fusion product $W^{\ell_1} * \cdots * W^{\ell_p}$
 is isomorphic to the $\mathfrak{sl}_2[t]$-module generated by a vector $v$ with relations
 \begin{equation*}\label{eq:Feigin2}
  \begin{split}
  (e \otimes \C[t]) v=0, \ \ \big(h&\otimes f(t)\big)v = (\ell_1+\cdots+\ell_p) f(0) v \ \ \text{for} \ f(t) \in \C[t],\\
  \Big(F(z)^r\Big)_s v &= 0 \ \text{ for }  r >0, s<-\sum_{k= 1}^p \min\{r, \ell_k\},
  \end{split}
 \end{equation*}
 where $\Big(F(z)^r\Big)_s \in U(\sl_2[t])$ denotes the coefficient of $z^s$ in the $r$-th power of $F(z)$.
\end{Thm}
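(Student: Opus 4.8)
The plan is to realize both sides explicitly and compare dimensions. Write $M = W^{\ell_1}*\cdots*W^{\ell_p}$, and let $\widetilde{M}$ be the $\sl_2[t]$-module presented by the stated relations, with cyclic generator $v$. Recall that $M$ is the associated graded of the tensor product $T = (W^{\ell_1})_{c_1}\otimes\cdots\otimes(W^{\ell_p})_{c_p}$ for distinct $c_1,\dots,c_p\in\C$, where each $(W^{\ell_j})_{c_j}=\varphi_{c_j}^*W^{\ell_j}$ is the $(\ell_j+1)$-dimensional simple $\sl_2$-module with $\sl_2[t]$ acting through evaluation at $c_j$. Since the $c_j$ are distinct, $\sl_2[t]\twoheadrightarrow\bigoplus_j\sl_2$ and $T$ is generated by $v=v_1\otimes\cdots\otimes v_p$, so $\dim_\C M = \prod_j(\ell_j+1)$. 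The proof then has two steps: (a) produce a surjection $\widetilde{M}\twoheadrightarrow M$ by checking the relations on the image $w$ of $v$ in $M$; (b) show $\dim_\C\widetilde{M}\le\prod_j(\ell_j+1)$, which forces that surjection to be an isomorphism.

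For (a), the two linear relations are immediate: each $e\otimes t^k$ annihilates $v$ outright (since $ev_j=0$), while $h\otimes t^k$ with $k>0$ merely scales $v$ by $\sum_j c_j^k\ell_j$ and hence acts by $0$ in the associated graded, leaving $(h\otimes 1)w=(\sum_j\ell_j)w$. The $F(z)$-relations are the crux. On $T$ the operator $f\otimes t^k$ acts as $\sum_j c_j^k f_{(j)}$, where $f_{(j)}$ denotes $f$ acting in the $j$-th tensor slot, so as an $\End(T)$-valued expansion in $z^{-1}$ one has
\[
F(z) = \sum_{j=1}^p\frac{f_{(j)}}{z-c_j}.
\]
Since the $f_{(j)}$ commute and $f_{(j)}^{m}v_j=0$ for $m>\ell_j$, the vector-valued rational function $F(z)^r v$ has a pole of order at most $n_j:=\min\{r,\ell_j\}$ at $z=c_j$ and no other poles. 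Hence $P(z)F(z)^r v$ is a polynomial in $z$, where $P(z)=\prod_j(z-c_j)^{n_j}$ has degree $D:=\sum_j\min\{r,\ell_j\}$; comparing coefficients of $z^j$ for $j<0$ gives, with $P(z)=\sum_m p_m z^m$ and $p_D=1$,
\[
\big(F(z)^r\big)_{j-D}\,v \;=\; -\sum_{m=0}^{D-1}p_m\,\big(F(z)^r\big)_{j-m}\,v .
\]
The operator on the left has degree $D-j-r$ while every operator on the right has strictly smaller degree; therefore $\big(F(z)^r\big)_{j-D}v\in F^{\le D-j-r-1}(T)$, so its image in the associated graded vanishes. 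As $j$ ranges over the negative integers this reads $\big(F(z)^r\big)_s w=0$ for all $s<-D=-\sum_k\min\{r,\ell_k\}$, proving (a).

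For (b), I would pass to the commutative description $\widetilde{M}=U(\fn_-[t])v=\C[f_0,f_1,\dots]\,v$ (using $\fn_+[t]v=0$ and $\fh[t]v=\C v$), with $f_k=f\otimes t^k$; thus $\widetilde{M}$ is cyclic over the polynomial algebra $\C[f_0,f_1,\dots]$, and the $F(z)$-relations become relations among the coefficients of $\big(\sum_k f_k z^{-k-1}\big)^r$. The dimension bound is most naturally obtained through the functional realization of the graded dual: the weight space of $\widetilde{M}$ spanned by monomials with $r$ factors of $f$ is a quotient of $\mathrm{Sym}^r$ of $\spa\{f_k\}$, and its dual embeds into the symmetric polynomials in $r$ variables, on which the relations impose degree bounds together with vanishing (``wheel'') conditions forcing vanishing whenever the variables specialize so as to exceed some $\ell_j$. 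Computing the dimension of this space of symmetric polynomials and summing over $r$ should yield $\prod_j(\ell_j+1)$ by a fermionic-type combinatorial identity. I expect step (b) — the dimension count via the functional model — to be the main obstacle, since it requires careful bookkeeping of the vanishing conditions and the production of an explicit spanning set; this is exactly where the machinery of \cite{MR1275728,MR2215613,MR2290922} enters, in parallel with the computation carried out for general $\fg$ elsewhere in this paper.
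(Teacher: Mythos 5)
This theorem is not proved in the paper at all: it is quoted from Feigin--Feigin \cite{MR1988973} (see also \cite[Section 6]{MR3296163}), so there is no internal argument to compare yours against. That said, your step (a) is correct and complete. Writing $F(z)$ on $T=(W^{\ell_1})_{c_1}\otimes\cdots\otimes(W^{\ell_p})_{c_p}$ as $\sum_j f_{(j)}/(z-c_j)$, bounding the pole order at $z=c_j$ by $n_j=\min\{r,\ell_j\}$, and using $P(z)=\prod_j(z-c_j)^{n_j}$ to express $\bigl(F(z)^r\bigr)_{j-D}v$ (with $D=\sum_j n_j$) as a combination of operators of strictly smaller filtration degree applied to $v$ is exactly the right way to verify the relations on the cyclic vector of the fusion product; together with the easy $e$- and $h$-relations this gives the surjection $\widetilde{M}\twoheadrightarrow W^{\ell_1}*\cdots*W^{\ell_p}$. (This part of your argument would in fact let one run the paper's Lemma \ref{eq:exsitence_of_surjection} for $\fg=\sl_2$ without invoking the present theorem, avoiding any circularity.)

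The genuine gap is step (b): the inequality $\dim\widetilde{M}\le\prod_j(\ell_j+1)$ is the entire content of the theorem beyond the surjection, and you only outline a plan for it --- indeed you flag it yourself as ``the main obstacle.'' What is missing is concrete and substantial: (i) the identification of the restricted dual of the weight space of $\widetilde{M}$ with $r$ factors of $f$ with a space of symmetric Laurent polynomials in $r$ variables subject to $\deg_{x_s}\le -2$ and to the condition that the diagonal specialization $x_1=\cdots=x_r=z$ has a pole of order at most $\sum_k\min\{r,\ell_k\}$ at $z=0$ (this is the $\sl_2$ case of the chain Proposition \ref{Prop:dual_isom}, the isomorphism (\ref{eq:isom_of_V}) and Lemma \ref{Lem:fermionic_form} of the paper); and (ii) the evaluation of the resulting fermionic sum $\sum_{\{m_a\}}\prod_a\binom{p_a+m_a}{m_a}$ as the $\sl_2$ tensor-product multiplicity $\bigl[W^{\ell_1}\otimes\cdots\otimes W^{\ell_p}:V(\mu)\bigr]$, which the paper imports from the literature even in the general case. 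Neither step is routine bookkeeping, and without them your argument establishes only a surjection, not the isomorphism. To close the gap you should either carry out (i) and (ii) for $\sl_2$ explicitly, or make precise that your step (a) plus Proposition \ref{Prop:critical_inequality} specialized to $\fg=\sl_2$ (whose proof is independent of the present theorem) yields the claim.
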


%
%

\section{Main Theorem}\label{Section:Main_Thm}

\subsection{The statement of the main theorem}

The following is the main theorem of this paper.

\begin{Thm}\label{Thm:Main}
 Let $i_1,\ldots,i_p \in I$, $\ell_1,\ldots,\ell_p \in \Z_{>0}$, and $a_1,\ldots,a_p \in \mA^\times$,
 and assume that the tensor product $W_q^{i_1,\ell_1}(a_1) \otimes\cdots \otimes W^{i_p,\ell_p}_q(a_p)$
 is $\ell$-highest weight.\\
 {\normalfont(i)} If $a_1(1) = \cdots = a_p(1) = c \in \C^\times$,
  then the classical limit of the tensor product is isomorphic 
  as a $\fg[t]$-module to the pull-back with respect to $\varphi_c$ of the fusion product of $W^{i_k,\ell_k}$ 
  {\normalfont($1\le k \le p$)}, i.e.,
  \[ \ol{W_q^{i_1,\ell_1}(a_1) \otimes\cdots \otimes W^{i_p,\ell_p}_q(a_p)} \cong \varphi^*_c\big(W^{i_1,\ell_1} * \cdots 
     *W^{i_p,\ell_p}\big).
  \]
 {\normalfont(ii)} In the general case, define a finite subset $\mathcal{C} = \{ a_k(1) \mid 1\leq k \leq p\} \subseteq \C^\times$.
  Then the following isomorphism of $\fg[t]$-modules holds:
  \[ \ol{W_q^{i_1,\ell_1}(a_1) \otimes\cdots \otimes W^{i_p,\ell_p}_q(a_p)} \cong \bigotimes_{c \in \mathcal{C}} \varphi_c^* \,\Big(
     \underset{k;\, a_k(1) = c}{\text{\huge{\lower0.2ex\hbox{$*$}}}} W^{i_k,\ell_k}\Big),
  \]
  where {\huge{\lower0.2ex\hbox{$*$}}}\,$W^{i_k,\ell_k}$ denotes the fusion product of $W^{i_k,\ell_k}$'s.
\end{Thm}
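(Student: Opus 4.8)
The plan is to establish part~(i) as the core of the argument and then deduce part~(ii) from it, using as inputs Theorem~B (Theorem~\ref{Thm:current}), the $\fg=\sl_2$ case of the present theorem (which is logically independent and must be proved separately), Proposition~\ref{Prop:classical_limit_of_W}, and Lemma~\ref{Lem:restriction}. Write $V=W_q^{i_1,\ell_1}(a_1)\otimes\cdots\otimes W_q^{i_p,\ell_p}(a_p)$, let $v=v_1\otimes\cdots\otimes v_p$ be the tensor product of the $\ell$-highest weight vectors, and let $\ol{v}\in\ol{V}=(U_\mA(\bL\fg)v)_\C$ be its image in the classical limit, so that $\ol{V}$ is a cyclic $\fg[t]$-module generated by $\ol{v}$. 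Put $\gl=\sum_k\ell_k\varpi_{i_k}$. Because passing to a fusion product preserves dimension, Proposition~\ref{Prop:classical_limit_of_W} gives $\dim_\C\varphi_c^*(W^{i_1,\ell_1}*\cdots*W^{i_p,\ell_p})=\prod_k\dim W^{i_k,\ell_k}=\prod_k\dim W_q^{i_k,\ell_k}(a_k)=\dim_{\C(q)}V=\dim_\C\ol{V}$. Hence for~(i) it suffices to produce a surjective $\fg[t]$-homomorphism from $\varphi_c^*(W^{i_1,\ell_1}*\cdots*W^{i_p,\ell_p})$ onto $\ol{V}$, and by Theorem~B (after applying $\varphi_c^*$ to its presentation) such a surjection exists as soon as $\ol{v}$ satisfies the $\varphi_c$-shifted images of the relations there: $\fn_+[t]\ol{v}=0$, $(h\otimes f(t))\ol{v}=f(c)\langle h,\gl\rangle\ol{v}$, and the shifted $(F_i(z)^r)_s$-relations.

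The first two families of relations are routine consequences of the $\ell$-highest weight property of $V$ and the formulas~(\ref{eq:l-highest}) for the action of $U_q(\bL\fh)$ on $v$, carried through the classical limit via $\ol{x_{i,r}^+}\mapsto e_i\otimes t^r$ and $\ol{h_{i,m}}\mapsto h_i\otimes t^m$; the value $f(c)$ enters because $a_k(1)=c$ for all~$k$. The heart of the matter is the family involving $F_i(z)$, and this is where the reduction to $\sl_2$ takes place. Since these operators lie in $U(\sl_{2,i}[t])$, they may be evaluated on the $U_{q,i}$-submodule $U_{q,i}v\subseteq V$. By Lemma~\ref{Lem:restriction} this submodule is isomorphic, as a $U_q(\bL\sl_2)_{d_i}$-module, to $\bigotimes_k L_q(\bm{\pi}^k_i(u))$ with $\bm{\pi}^k=\bm{\pi}_{i_k,\ell_k,a_k}$, in which the factors with $i_k\neq i$ are trivial and those with $i_k=i$ are exactly the $\sl_2$-type KR modules $W^{1,\ell_k}_q(a_k)$. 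Applying the $\fg=\sl_2$ case of the theorem to this tensor product, and then the $\sl_2$-presentation of Theorem~\ref{Thm:MR1988973} (shifted by $\varphi_c$), yields precisely $(F_i(z)^r)_s\ol{v}=0$ for $s<-\sum_{k;\,i_k=i}\min\{r,\ell_k\}$. Together with the first two families this produces the desired surjection, and the dimension count above upgrades it to an isomorphism, establishing~(i).

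For~(ii) I would first group the factors according to the value $c=a_k(1)\in\mathcal{C}$. Whenever $a_k(1)\neq a_{k'}(1)$ one has $a_k\notin q^{\Z}a_{k'}$, so Lemma~\ref{Lem:commutativity_of_tensor}(ii) permits reordering the factors so that those sharing a common value of $a_k(1)$ are adjacent; thus $V\cong\bigotimes_{c\in\mathcal{C}}V_c$ with $V_c=\bigotimes_{k;\,a_k(1)=c}W_q^{i_k,\ell_k}(a_k)$, each $V_c$ again $\ell$-highest weight. Part~(i) identifies $\ol{V_c}$ with $\varphi_c^*$ of the fusion product of the $W^{i_k,\ell_k}$ having $a_k(1)=c$, which is the pull-back of a cyclic graded module with generator $\ol{v_c}$. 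As the numbers $c\in\mathcal{C}$ are pairwise distinct, the tensor product $\bigotimes_c\ol{V_c}$ is itself cyclic, generated by $\bigotimes_c\ol{v_c}$, by the result of Feigin--Loktev recalled in Subsection~\ref{Subsection:Fusion product}. Since $U_\mA(\bL\fg)$ is a sub-coalgebra (Lemma~\ref{Lem:A-form}), the lattice $U_\mA(\bL\fg)v$ embeds in $\bigotimes_c U_\mA(\bL\fg)v_c$, and specialization gives a $\fg[t]$-homomorphism $\ol{V}\to\bigotimes_c\ol{V_c}$ sending $\ol{v}$ to $\bigotimes_c\ol{v_c}$. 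Its image is the cyclic submodule generated by $\bigotimes_c\ol{v_c}$, hence all of $\bigotimes_c\ol{V_c}$, and a dimension comparison identical to that in~(i) shows this surjection is an isomorphism, giving~(ii).

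The step I expect to be the main obstacle is the reduction of the $F_i(z)$-relations to $\sl_2$: one must verify that restriction to $U_{q,i}$ is genuinely compatible with taking classical limits, so that the $\sl_2$ statement applies to the $\sl_{2,i}[t]$-submodule of $\ol{V}$ generated by $\ol{v}$. This is delicate precisely because $U_{q,i}$ is not a sub-coalgebra (as noted after~(\ref{eq:sl2-isom})): the comultiplication defining the tensor product in Lemma~\ref{Lem:restriction} is that of $U_q(\bL\sl_2)_{d_i}$ rather than that of $U_q(\bL\fg)$, so one must check that $\Psi_i$ and the identification of Lemma~\ref{Lem:restriction} respect the relevant $\mA$-forms, ensuring that the two classical limits coincide. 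Finally, the one genuinely new ingredient on which the whole reduction rests, the $\fg=\sl_2$ case of the theorem, has to be proved directly at the level of $\mA$-lattices using Theorem~\ref{Thm:MR1988973}, and I would expect that self-contained argument to demand the most work.
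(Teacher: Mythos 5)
Your proposal is correct and follows essentially the same route as the paper: both arguments verify the relations of Theorem~\ref{Thm:current} on the generator of the classical limit by restricting to the subalgebras $U_{q,i}$ via Lemma~\ref{Lem:restriction}, invoking the $\sl_2$ case (Proposition~\ref{Prop:Uqsl2}) together with Theorem~\ref{Thm:MR1988973} to obtain the $\bigl(F_i(z)^r\bigr)_s$-relations, and then upgrade the resulting surjection to an isomorphism by the dimension count from Proposition~\ref{Prop:classical_limit_of_W}; part~(ii) is likewise deduced exactly as in the paper from the sub-coalgebra property of $U_\mA(\bL\fg)$, Lemma~\ref{Lem:commutativity_of_tensor}(ii), and the cyclicity result of Feigin--Loktev. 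The compatibility issue you flag concerning $U_{q,i}$ not being a sub-coalgebra is resolved in the paper exactly where you anticipate, namely inside Lemma~\ref{Lem:restriction} via the cited result of Chari--Pressley, with the passage to classical limits handled by the inclusion $U_{\mA,i}v \subseteq U_\mA(\bL\fg)v$.
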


\begin{Rem}\normalfont
 (a) As far as the author knows, no necessary and sufficient conditions are known 
  for a tensor product of KR modules to be $\ell$-highest weight, but a sufficient condition has been obtained
  in \cite{MR1883181}.\\ 
 (b) In the setting of the assertion (i), the graded $\fg[t]$-module 
  \[ \varphi_{-c}^*\Big(\,\ol{W_q^{i_1,\ell_1}(a_1) \otimes\cdots \otimes W^{i_p,\ell_p}_q(a_p)}\,\Big)
  \]
  is called the graded limit of $W_q^{i_1,\ell_1}(a_1) \otimes\cdots \otimes 
  W^{i_p,\ell_p}_q(a_p)$.
  In this terminology, the assertion (i) claims that the graded limit of the tensor product is isomorphic to the fusion product of 
  the graded limits.\\
 (c) The $\fg[t]$-module $\varphi_c^*\left(W^{i_1,\ell_1} * \cdots *W^{i_p,\ell_p}\right)$ in the assertion (i)
 is uniquely lifted to an $\bL\fg$-module via the Lie algebra homomorphism  (\ref{eq:Laurent_extension}),
 and then the isomorphism in (i) becomes that of $\bL\fg$-modules.
 The isomorphism in (ii) is also lifted to that of $\bL\fg$-modules in the same way.
\end{Rem}

\subsection{Reduction to \boldmath{$U_q(\bL\mathfrak{sl}_2)$} case}

Theorem \ref{Thm:Main} is proved by reducing it to the case of $U_q(\bL\mathfrak{sl}_2)$,
and for the reduction we need to prove another theorem which gives defining relations of the fusion product of $W^{i,\ell}$'s 
(Theorem \ref{Thm:current}).
In this subsection we will present the statement of Theorem \ref{Thm:current}, and then show that Theorem \ref{Thm:Main} is indeed reduced to 
the $U_q(\bL\mathfrak{sl}_2)$ case (Proposition \ref{Prop:Uqsl2}) via this theorem.
The proofs of Theorem \ref{Thm:current} and Proposition \ref{Prop:Uqsl2} will be given in the next two sections.

Define for each $i \in I$ a power series $F_i(z) \in U(\fg[t])[[z^{-1}]]$ by
\[ F_i(z) = \sum_{k =0}^\infty (f_i \otimes t^k) z^{-k-1}.
\]
For any formal series $f(z)$ in $z$, denote by $f(z)_s$ ($s \in \Z$) the coefficient of $z^s$.
In Section \ref{Section:Proof1}, we will show the following theorem, which is a generalization of Theorem \ref{Thm:MR1988973}.

\begin{Thm}\label{Thm:current}
 Suppose that sequences $i_1,\ldots,i_p$ of elements of $I$ and $\ell_1,\ldots,\ell_p$ of positive integers are given.
 Set $\gl = \sum_{k=1}^p \ell_k \varpi_{i_k}$, and define a subset 
 \[ S_i =\{1 \leq k \leq p \mid i_k = i\}
 \]
 for each $i \in I$.
 Then the fusion product $W^{i_1,\ell_1} * \cdots * W^{i_p,\ell_p}$ is isomorphic to the $\fg[t]$-module generated 
 by a single vector $v$ with relations
 \begin{equation}\label{eq:Defining_rel.of_W(ell)}
  \begin{split}
  \fn_+[t]v&=0, \ \ \big(h \otimes f(t)\big)v= f(0)\langle h, \gl\rangle v \ \text{ for } h \in \fh, f(t) \in \C[t],\\
  \Big(F_i(z)^r\Big)_sv&=0 \ \text{ for } i\in I, r >0, s<-\sum_{k \in S_i} \min\{r, \ell_k\}.
  \end{split}
 \end{equation}
\end{Thm}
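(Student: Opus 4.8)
The plan is to denote by $M$ the $\fg[t]$-module presented by the single generator $v$ and the relations (\ref{eq:Defining_rel.of_W(ell)}), and to prove the asserted isomorphism by first producing a surjection $M \twoheadrightarrow W^{i_1,\ell_1}*\cdots*W^{i_p,\ell_p}$ and then showing that the dimension of each weight space of $M$ does not exceed that of the fusion product. Note first that the relations force $\fh[t]v \subseteq \C v$ and $\fn_+[t]v=0$, so by the PBW triangular decomposition $M = U(\fn_-[t])v$; the whole problem is therefore a statement about $U(\fn_-[t])$ modulo the relations.

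First I would construct the surjection, for which it suffices to check that the cyclic vector of the fusion product satisfies (\ref{eq:Defining_rel.of_W(ell)}). The weight condition $(h\otimes f(t))v= f(0)\langle h,\gl\rangle v$ and $\fn_+[t]v=0$ are immediate from the construction of the fusion product out of the $W^{i_k,\ell_k}$. For the relations involving $F_i(z)^r$, the key observation is that $F_i(z)$ lies entirely in $\sl_{2,i}[t]\subseteq \fg[t]$, so each such relation can be verified after restricting to $\sl_{2,i}[t]$. The $\sl_{2,i}[t]$-submodule of the fusion product generated by the cyclic vector is a quotient of the $\sl_2$-fusion product of the modules $W^{\ell_k}$ over $k\in S_i$, because for $k\in S_i$ the subspace $U(\sl_{2,i}[t])v_k\subseteq W^{i_k,\ell_k}$ is the $(\ell_k+1)$-dimensional evaluation module $W^{\ell_k}$, whereas for $k\notin S_i$ it is trivial. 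Applying Theorem \ref{Thm:MR1988973} to this $\sl_2$-fusion product, whose vanishing bound is exactly $-\sum_{k\in S_i}\min\{r,\ell_k\}$, yields the required relations. This gives the surjection, and hence $\dim M_{\gl-\gb}\ge \dim(W^{i_1,\ell_1}*\cdots*W^{i_p,\ell_p})_{\gl-\gb}$ for every $\gb\in Q^+$.

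The reverse inequality is the heart of the matter, and here the main tool is the functional realization of the graded dual of $U(\bL\fn_-)$ developed in \cite{MR1275728,MR2215613,MR2290922}. Fixing a weight $\gl-\gb$ with $\gb=\sum_i m_i\ga_i$, since $M=U(\fn_-[t])v$ the space $M_{\gl-\gb}$ is a quotient of $U(\fn_-[t])_{-\gb}\cdot v$, so dually $M_{\gl-\gb}^*$ embeds into the functional model of $(U(\fn_-[t])_{-\gb})^*$: a space of functions in colored variables $x_{i,1},\dots,x_{i,m_i}$ ($i\in I$), symmetric within each color and subject to the wheel/vanishing conditions coming from the Serre relations of $\fn_-$. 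The plan is then to translate each relation $(F_i(z)^r)_s v=0$ into an analytic condition in this model: because $F_i(z)$ involves only the color $i$, the relation imposes a bound on the orders of poles (equivalently on the degrees) in the color-$i$ variables, and the precise cutoff $s<-\sum_{k\in S_i}\min\{r,\ell_k\}$ is designed to reproduce, color by color, exactly the conditions cutting out the $\sl_2$-fusion product of the $W^{\ell_k}$ over $k\in S_i$ in its own functional model. Comparing with the known functional realization of the dual of the fusion product, one obtains $\dim M_{\gl-\gb}\le \dim(W^{i_1,\ell_1}*\cdots*W^{i_p,\ell_p})_{\gl-\gb}$ for all $\gb$, which together with the surjection forces the isomorphism.

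The step I expect to be the main obstacle is precisely this last dimension bound: showing that the simple-root relations already exhaust the dual space. The difficulty is the coupling between distinct colors, since in $\fg[t]$ the non-simple root vectors $f_\ga\otimes t^k$ act through iterated commutators of the $f_i\otimes t^k$, and in the functional model this coupling is encoded by the wheel conditions relating variables of adjacent colors. One must verify that, despite this coupling, the pole bounds coming only from the single-color series $F_i(z)$ suffice to bound the entire function space; this is the technical content of the assertion that the relations essentially involve only root vectors attached to simple roots. Establishing the functional realization rigorously and proving the order-of-vanishing estimates that match the quantities $\min\{r,\ell_k\}$ across all colors is the core computation, and I would isolate the more technical lemmas about the functional realization in an appendix.
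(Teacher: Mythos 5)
Your first half is exactly the paper's argument: the surjection from the abstractly presented module $M$ onto $W^{i_1,\ell_1}*\cdots*W^{i_p,\ell_p}$ is obtained by restricting to each $\sl_{2,i}[t]$ and invoking Theorem \ref{Thm:MR1988973}, and that part is fine. The gap is in how you close the reverse inequality. You propose to bound $\dim M_{\gl-\ggg}$ by embedding its dual into the functional model and then ``comparing with the known functional realization of the dual of the fusion product'' --- but no such independent realization is available to compare with; identifying that dual space is essentially equivalent to the theorem being proved. What actually closes the argument is a combinatorial input you never name: after passing to the quotient $M/\fn_-M$ (so that one bounds the multiplicities $[M:V(\mu)]=\dim(M/\fn_-M)_\mu$ rather than weight spaces, which is what places the dual inside the subspace $\ol{\mathcal{U}}_\ggg$ cut out by the degree conditions of Lemma \ref{Lem:description_of_barU}), one filters the relevant function space $\mathcal{V}_\ggg$ by the specialization maps $\varphi_{\bm{\mu}}$ and counts the associated graded pieces, obtaining the fermionic upper bound of Lemma \ref{Lem:fermionic_form}. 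The theorem then follows only because that fermionic sum is already known to equal the multiplicity $\big[W^{i_1,\ell_1}_q(a_1)\otimes\cdots\otimes W^{i_p,\ell_p}_q(a_p):V_q(\gl-\ggg)\big]=\big[W:V(\gl-\ggg)\big]$ by the previously established combinatorial Kirillov--Reshetikhin conjecture \cite{MR1745263,MR1993360,MR2254805,MR2428305}. Your proposal contains no substitute for this external identity, and the pole estimates alone do not determine the dimension of the function space.

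A secondary point: your claim that the single-color pole bounds from $F_i(z)$ ``suffice to bound the entire function space'' understates where the cross-color coupling is handled. It does not disappear; it enters through the vacancy numbers $p_a^{(i)}$ of the fermionic formula, which mix colors via the Cartan matrix (Lemma \ref{Lem:description_of_zero-pole}), while the relations $(F_i(z)^r)_s$ only supply the summands $\sum_{k\in S_i}\min\{a,\ell_k\}$ in $p_a^{(i)}$. There is also nontrivial bookkeeping absent from your sketch: the relations generate a left ideal of $U(\fn_-[t])$, and one must show it extends to the left ideal $\mathcal{J}$ of $U(\bL\fn_-)$ generated by the truncated series $\wti{F}_i(z)$, and that the two quotients agree (Lemma \ref{Lem:coincidence_of_ideals}), before the pairing with the function space applies at all.
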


\begin{Rem}\label{Remark_for_current}\normalfont
 (a) For an $I$-tuple of partitions $\bm{\mu}= (\mu^{(1)},\ldots,\mu^{(n)})$ with 
  $\mu^{(i)} = (\mu_1^{(i)}\geq \ldots\geq \mu_{p_i}^{(i)})$, denote by $W(\bm{\mu})$ the fusion product 
  ${\text{{\LARGE{\lower0.2ex\hbox{$*$}}}}}_{i,k}W^{i,\mu_k^{(i)}}$.
  Then Theorem \ref{Thm:current} implies the following fact: if two $I$-tuples of partitions $\bm{\mu}=(\mu^{(1)},\ldots,\mu^{(n)})$ and 
  $\bm{\nu}=(\nu^{(1)},\ldots,\nu^{(n)})$ satisfy $|\mu^{(i)}| = |\nu^{(i)}|$ and $\mu^{(i)} \leq \nu^{(i)}$ (with respect to 
  the dominance order) for all $i \in I$, 
  then there exists a surjective $\fg[t]$-module homomorphism $W(\bm{\mu}) \twoheadrightarrow W(\bm{\nu})$.
  Indeed for every $i \in I$ and $r >0$, setting $k_r= \max\{\, k \mid \mu_k^{(i)} > r\}$, the assumption implies that
  \[ \sum_{k \ge 1} \min\{r,\mu_k^{(i)}\} = rk_r + \sum_{k > k_r}\mu^{(i)}_k \geq rk_r+ \sum_{k > k_r}\nu^{(i)}_k \geq 
     \sum_{k \geq 1} \min\{r,\nu_k^{(i)}\},
  \]
  and hence a surjection $W(\bm{\mu}) \twoheadrightarrow W(\bm{\nu})$ exists by Theorem \ref{Thm:current}.
  This surjection can be viewed as an extension of the Schur positivity of KR-modules proved in \cite{MR3226992} to the current 
  algebra setting (see also \cite{Naoi:Schur}).\\
 (b) In \cite{MR3296163}, the authors have introduced a collection of $\fg[t]$-modules $V(\bm{\xi})$ indexed by an 
  $R^+$-tuple of partitions
  $\bm{\xi} = (\xi^{(\ga)})_{\ga \in R^+}$ satisfying $|\xi^{(\ga)}| = \langle h_\ga, \mu\rangle$ for some $\mu \in P^+$.
  In their terminology, Theorem \ref{Thm:current} says that the module $W^{i_1,\ell_1} * \cdots * W^{i_p,\ell_p}$
  is isomorphic to $V(\bm{\xi})$, 
  where $\bm{\xi} = (\xi^{(\ga)})_{\ga \in R^+}$ is defined by 
   \[ \xi^{(\ga)} = \begin{cases} \mathrm{part}\,\{\ell_k \mid k\in S_i\} & \text{if} \ \ga = \ga_i,\\
                                  \big(1^{\langle h_\ga,\gl\rangle}\big) & 
                                       \text{if} \ \ga \ \text{is not simple}.
                    \end{cases}
   \]
   Here $\mathrm{part}\,T$ for a multiset of positive integers $T$ denotes the partition obtained by ordering 
   the elements of $T$.
\end{Rem}

We will prove the following proposition in Section \ref{section:proof_of_Uqsl2}.
Though this is just a special case of Theorem \ref{Thm:Main} (i) for $\fg = \mathfrak{sl}_2$,
we write the precise statement here for later reference
(we write $W_q^{\ell}(a)$ for $W_q^{1,\ell}(a)$ and $W^{\ell}$ for $W^{1,\ell}$).

\begin{Prop}\label{Prop:Uqsl2}
 Assume that $\fg = \mathfrak{sl}_2$. 
 Let $\ell_1,\ldots,\ell_p \in \Z_{>0}$ and $a_1,\ldots,a_p \in \mathcal{A}^\times$ be sequences such that 
 $a_1(1) = \cdots =a_p(1) = c \in \C^\times$ and $W_q^{\ell_1}(a_1) \otimes \cdots \otimes W_q^{\ell_p}(a_p)$ is $\ell$-highest weight.
 Then we have
 \[ \ol{W_q^{\ell_1}(a_1) \otimes \cdots \otimes W_q^{\ell_p}(a_p)} \cong \varphi_c^*\big(W^{\ell_1}*\cdots *W^{\ell_p}\big)
 \]
 as $\sl_2[t]$-modules.
\end{Prop}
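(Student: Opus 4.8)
The plan is to present the classical limit as a quotient of the pulled-back fusion product and to close the argument by a dimension count. Write $V = W_q^{\ell_1}(a_1)\otimes\cdots\otimes W_q^{\ell_p}(a_p)$, let $v = v_1\otimes\cdots\otimes v_p$ be the tensor product of $\ell$-highest weight vectors, and set $L = U_{\mathcal{A}}(\bL\mathfrak{sl}_2)v$. By hypothesis $V$ is $\ell$-highest weight, so Lemma \ref{Lem:A-lattice} shows that $L$ is an $\mathcal{A}$-lattice and that $\ol V := L_\C$ is the classical limit, cyclic on $\ol v$ with $\dim_\C \ol V = \dim_{\C(q)} V = \prod_{k}(\ell_k+1)$. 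The fusion product $W^{\ell_1}*\cdots*W^{\ell_p}$ has the same dimension $\prod_k(\ell_k+1)$, since a fusion product and its underlying tensor product have equal dimension and $\dim W^{\ell_k} = \ell_k+1$. Hence it suffices to construct a surjection $\varphi_c^*\big(W^{\ell_1}*\cdots*W^{\ell_p}\big)\twoheadrightarrow \ol V$ of $\mathfrak{sl}_2[t]$-modules carrying the cyclic generator to $\ol v$; the dimension equality then forces it to be an isomorphism. Since $c\neq 0$, the limit is supported at $t=c$, so $\fg\otimes(t-c)^N\C[t]$ acts trivially for $N\gg 0$; by (\ref{eq:Laurent_extension}) this makes $\ol V$ cyclic over $\mathfrak{sl}_2[t]$ and lets us pass freely between the $\bL\mathfrak{sl}_2$- and $\mathfrak{sl}_2[t]$-module structures.

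By Theorem \ref{Thm:MR1988973} the fusion product is presented by explicit relations, so constructing the surjection amounts to verifying those relations on $\ol v$. Twisting by $\varphi_{-c}$ and writing $\ol V' = \varphi_{-c}^*\ol V$, I must check on $\ol v\in\ol V'$ that (i) $(e\otimes\C[t])\ol v = 0$, (ii) $(h\otimes f(t))\ol v = \big(\textstyle\sum_k \ell_k\big)f(0)\ol v$, and (iii) $\big(F(z)^r\big)_s\ol v = 0$ for $r>0$ and $s<-\sum_k\min\{r,\ell_k\}$. Relations (i) and (ii) are immediate: (i) holds because $e\otimes t^r = \ol{x_r^+}$ annihilates the $\ell$-highest weight vector, while (ii) follows from the $h_{i,m}$-eigenvalues recorded in (\ref{eq:l-highest}) for $\bm{\pi} = \prod_k \bm{\pi}_{1,\ell_k,a_k}$: specializing $q=1$ and using $\ol{a_k} = c$ gives the eigenvalue $\sum_k\ell_k c^m$ for $h\otimes t^m$ on $\ol v\in\ol V$, which after the shift by $-c$ becomes the required scalar $\big(\sum_k\ell_k\big)f(0)$.

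The heart of the matter is relation (iii). Translating the twist, it is equivalent to $\big(F(z+c)^r\big)_s\ol v = 0$ in the untwisted $\ol V$ for $s<-\sum_k\min\{r,\ell_k\}$. The plan is to exhibit, for each such $(r,s)$, an element $\Theta_{r,s}\in U_{\mathcal{A}}(\bL\mathfrak{sl}_2)$ with $\Theta_{r,s}v = 0$ in $V$ and $\ol{\Theta_{r,s}} = \big(F(z+c)^r\big)_s$ up to a unit; then $\ol{\Theta_{r,s}}\,\ol v = \ol{\Theta_{r,s}v} = 0$ yields (iii). To build $\Theta_{r,s}$ I will use the single-factor structure: as a $U_q(\mathfrak{sl}_2)$-module each $W_q^{\ell_k}(a_k)$ is the simple module $V_q(\ell_k\varpi_1)$ equipped with an evaluation action, so its $\ell$-highest weight vector satisfies $(x_0^-)^{\ell_k+1}v_k = 0$ together with evaluation-type relations expressing $x_r^- v_k$ through $x_0^- v_k$ with a parameter $b_k\in\mathcal{A}^\times$ satisfying $\ol{b_k} = c$. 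Assembling these factorwise through the comultiplication, the vanishing $\Theta_{r,s}v = 0$ will reflect that at most $\min\{r,\ell_k\}$ of the $r$ negative Drinfeld generators can act nontrivially on the $k$-th factor before $(x_0^-)^{\ell_k+1}$ kills it; summing the contributions over $k$ produces exactly the threshold $s<-\sum_k\min\{r,\ell_k\}$.

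The main obstacle is the construction and specialization of $\Theta_{r,s}$. The standard Hopf structure is not simple on the Drinfeld generators $x_r^\pm$, so computing the action of a product $x_{k_1}^-\cdots x_{k_r}^-$ on $v_1\otimes\cdots\otimes v_p$ and extracting its leading behaviour as $q\to 1$ requires genuine care; moreover the quantum monomials do not commute, whereas the classical factors $f\otimes t^k$ constituting $\big(F(z+c)^r\big)_s$ do, so one must identify the correct $q$-symmetrized combination that both annihilates $v$ and specializes to the symmetric expression $\big(F(z+c)^r\big)_s$. Once this matching is established, the surjection $\varphi_c^*\big(W^{\ell_1}*\cdots*W^{\ell_p}\big)\twoheadrightarrow\ol V$ follows and, by the dimension equality of the first step, is the desired isomorphism of $\mathfrak{sl}_2[t]$-modules.
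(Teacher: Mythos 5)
Your overall architecture (dimension count plus a surjection between the two sides, with the isomorphism forced by equality of dimensions) is sound, and relations (i) and (ii) are indeed immediate from the $\ell$-weight data. But the proof has a genuine gap at exactly the point you flag yourself: relation (iii), i.e.\ $\big(F(z)^r\big)_s\,\ol v=0$ for $s<-\sum_k\min\{r,\ell_k\}$, is never established. You propose to find $\Theta_{r,s}\in U_{\mA}(\bL\sl_2)$ with $\Theta_{r,s}v=0$ and $\ol{\Theta_{r,s}}=\big(F(z+c)^r\big)_s$ by ``assembling factorwise through the comultiplication,'' but this is not a routine verification: the Drinfeld generators $x^-_r$ ($r\neq 0$) are not primitive for the standard Hopf coproduct, so the action of a monomial $x^-_{k_1}\cdots x^-_{k_r}$ on $v_1\otimes\cdots\otimes v_p$ does not decompose factorwise, and controlling the error terms (and their $q\to 1$ limits) for powers of the current is essentially equivalent to proving a quantum analogue of the Feigin--Feigin relations for tensor products of KR modules --- a statement not available off the shelf and not supplied here. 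Since the entire content of the proposition is concentrated in (iii), the argument as written does not close.

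For comparison, the paper avoids this issue entirely and argues in the opposite direction: after permuting the factors (using simplicity criteria for pairwise tensor products) so that $a_s/a_r\notin\tilde q^{2\Z_{>0}}$, it embeds $W_q^{\ell_1}(a_1)\otimes\cdots\otimes W_q^{\ell_p}(a_p)$ into a tensor product of fundamental modules $\bigotimes_j\bigotimes_{k\in M_j}W_q^1(\tilde q^{2j-2}a_k)$ (Lemma \ref{Lem:injection}). Because $U_{\mA}(\bL\fg)$ is a sub-coalgebra, this induces a map of classical limits; the limit of each column is known to be $\varphi_c^*D(m_j)$ by Chari--Pressley and Chari--Loktev (Proposition \ref{Prop:Chari_Loktev}), the image of the cyclic vector generates $\varphi_c^*D(m_1,\ldots,m_L)$, and Lemma \ref{Lem:another_realization} identifies $D(m_1,\ldots,m_L)$ with the fusion product via the Feigin--Loktev construction. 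This yields a surjection from the classical limit onto $\varphi_c^*(W^{\ell_1}*\cdots*W^{\ell_p})$, and the same dimension count finishes. If you want to salvage your route, you would either need to prove the quantum annihilation statement for $\Theta_{r,s}$ directly (a substantial undertaking), or replace step (iii) by the paper's embedding argument.
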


Now we deduce Theorem \ref{Thm:Main} in full generality, assuming Theorem \ref{Thm:current} and Proposition \ref{Prop:Uqsl2}.
First we show the assertion (i). 
Let $v_k$ ($1\leq k\leq p$) denote an $\ell$-highest weight vector of $W_q^{i_k,\ell_k}(a_k)$.
In this proof we use the following abbreviations:
\[ W_q = W_q^{i_1,\ell_1}(a_1)\otimes \cdots \otimes W_q^{i_p,\ell_p}(a_p), \ \ \ v=v_1 \otimes \cdots \otimes v_p \in W_q.
\]
Note that $v$ is an $\ell$-highest weight vector of $W_q$ by the assumption.
Fix $i \in I$ for a moment, and
let $k_1,\ldots,k_{p_i}$ be the subsequence of $1,\ldots,p$ such that 
\[ \{k_1,\ldots,k_{p_i}\} = \{1\leq k \leq p \mid i_k = i\}.
\]
By Lemma \ref{Lem:restriction}, the $U_{q,i}$-submodule $U_{q,i}v$ of $W_q$ is isomorphic, as a $U_{q}(\bL\mathfrak{sl}_2)_{d_i}$-module,
to 
\[ W_q^{\ell_{k_1}}(a_{k_1}) \otimes \cdots \otimes W_q^{\ell_{k_{p_i}}}(a_{k_{p_i}}).
\]
Obviously this is an $\ell$-highest weight $U_{q}(\bL\mathfrak{sl}_2)_{d_i}$-module.
Now we consider the classical limit $\ol{U_{q,i}v} = \C \otimes_\mA U_{\mA,i}v$, which is an $\bL\mathfrak{sl}_{2,i}$-module.
By Proposition \ref{Prop:Uqsl2}, $\ol{U_{q,i}v}$ is isomorphic to
$\varphi_{c}^*\big(W^{\ell_{k_1}} * \cdots * W^{\ell_{k_{p_i}}}\big)$ as an $\mathfrak{sl}_2[t]$-module, 
and hence it holds that
\[ \varphi_{-c}^*\ol{U_{q,i}v} \cong W^{\ell_{k_1}} * \cdots * W^{\ell_{k_{p_i}}}.
\]
Then by Theorem \ref{Thm:MR1988973}, the vector 
$\bar{v} = 1\otimes v \in \varphi_{-c}^*\ol{U_{q,i}v}$ satisfies the relations
\begin{equation}\label{eq:relations_for_i}
 \begin{split}
  \big(e_i\otimes \C[t]\big)\bar{v}&=0, \ \ \big(h_i \otimes f(t)\big)\bar{v}= f(0)\langle h_i,\gl\rangle
  \bar{v} \ \text{ for } f(t) \in \C[t],\\ 
  \Big(F_i(z)^r\Big)_s\bar{v}&=0 \ \text{ for } r >0, s<-\sum_{j= 1}^{p_i} \min\{r, \ell_{k_j}\},
 \end{split}
\end{equation}
where $\gl = \sum_{k=1}^{p} \ell_{k}\varpi_{i_k}$.
Moreover there is an $\mathfrak{sl}_{2,i}[t]$-module homomorphism $\ol{U_{q,i}v} \to \ol{W_q}$ since $U_{\mA,i} v \subseteq U_\mA(\bL\fg)v$,
which induces a homomorphism
$\varphi_{-c}^*\ol{U_{q,i}v} \to \varphi_{-c}^*\ol{W_q}$.
Hence $\bar{v} \in \varphi_{-c}^*\ol{W_q}$ also satisfies the relations (\ref{eq:relations_for_i}).
Now applying this argument to all $i \in I$,  it follows from Theorem \ref{Thm:current} that there exists a surjective $\fg[t]$-module 
homomorphism 
\[ W^{i_1,\ell_1} * \cdots * W^{i_p,\ell_p} \twoheadrightarrow \varphi_{-c}^* \ol{W_q}.
\]
It follows from Proposition \ref{Prop:classical_limit_of_W} 
that the dimensions of these modules are equal. 
Hence this is an isomorphism and the assertion (i) is proved.

Now the assertion (ii) of Theorem \ref{Thm:Main} is deduced from (i) as follows.
For each $c \in \mathcal{C}$, set 
\[ W_q^{(c)} = \bigotimes_{k;\, a_k(1)=c} W^{i_k,\ell_k}_q(a_k),
\]
where the factors are ordered so that $W^{i_r,\ell_r}_q(a_r)$ is left to $W_q^{i_s,\ell_s}(a_s)$ if $r<s$.
It follows from Lemma \ref{Lem:commutativity_of_tensor} (ii) that
\[ W_q^{i_1,\ell_1}\otimes \cdots \otimes W_q^{i_p,\ell_p} \cong \bigotimes_{c \in \mathcal{C}} W_q^{(c)},
\]
where the order of the right-hand side is arbitrary.
Hence each $W_q^{(c)}$ is $\ell$-highest weight since so is $W_q^{i_1,\ell} \otimes \cdots \otimes W_q^{i_p,\ell_p}$.
Let $v^{(c)}$ ($c \in \mathcal{C}$) be an $\ell$-highest weight vector of $W_q^{(c)}$.
Since $U_\mA(\bL\fg)$ is a sub-coalgebra, we have
\[ U_{\mA}(\bL\fg)\bigotimes_{c \in \mathcal{C}} v^{(c)} \subseteq \bigotimes_{c\in \mathcal{C}} U_{\mA}(\bL\fg)v^{(c)}
\]
in $\bigotimes_{c} W_q^{(c)}$, and hence we obtain by (i) an $\bL\fg$-module homomorphism
\[ \ol{\bigotimes_{c \in \mathcal{C}} W_q^{(c)}} \to \bigotimes_{c \in \mathcal{C}} \ol{ W_q^{(c)}} \cong \bigotimes_{c \in \mathcal{C}} 
   \varphi_c^*\,\Big(\underset{k;\, a_k(1) = c}{\text{\huge{\lower0.2ex\hbox{$*$}}}} W^{i_k,\ell_k}\Big).
\]
By \cite[Proposition 1.4]{MR1729359} the module $\bigotimes_{c} \varphi_{c}^*\big(${\LARGE{\lower0.2ex\hbox{$*$}}}\,$W^{i_k,\ell_k}\big)$
is generated by the image of $\ol{\bigotimes_{c}v^{(c)}}$,
and hence the homomorphism is surjective.
Then the comparison of the dimensions shows that this is an isomorphism, and (ii) is proved.

\section{Proof of Theorem \ref{Thm:current}}\label{Section:Proof1}

\subsection{Reduction to a study of a quotient space of \boldmath{$U(\bL\fn_-)$}}\label{Subsection:reduction}

This section is devoted to the proof of Theorem \ref{Thm:current}.
Fix sequences $i_1,\ldots,i_p$ of elements of $I$ and $\ell_1,\ldots,\ell_p$ of positive integers.
As in the theorem, set $\gl = \sum_k \ell_k \varpi_{i_k}$ and $S_i = \{1\leq k \leq p \mid i_k = i\}$.

Write $W = W^{i_1,\ell_1} * \cdots * W^{i_p,\ell_p}$, and let $M$ denote the $\fg[t]$-module generated by a vector $v$ with
relations (\ref{eq:Defining_rel.of_W(ell)}).
We begin with the following lemma.

\begin{Lem}\label{eq:exsitence_of_surjection}
 There exists a surjective $\fg[t]$-module homomorphism from $M$ to $W$.
\end{Lem}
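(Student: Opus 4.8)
The plan is to exhibit $W=W^{i_1,\ell_1}*\cdots*W^{i_p,\ell_p}$ as a cyclic $\fg[t]$-module generated by the image $\bar w$ of the tensor product of generators $v_1\otimes\cdots\otimes v_p$, and then verify that $\bar w$ satisfies all the defining relations \eqref{eq:Defining_rel.of_W(ell)} of $M$. Once this is done, the universal property of $M$ (it is the module generated by $v$ subject only to those relations) immediately furnishes a surjective $\fg[t]$-module homomorphism $M\twoheadrightarrow W$ sending $v\mapsto\bar w$.

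First I would recall that $W$ is by construction the associated graded of the tensor product $(W^{i_1,\ell_1})_{c_1}\otimes\cdots\otimes(W^{i_p,\ell_p})_{c_p}$ with respect to the filtration $F^{\leq k}$, and that by \cite{MR1729359} this tensor product is cyclic on $v_1\otimes\cdots\otimes v_p$; hence $W$ is cyclic on the symbol $\bar w$ of this vector. The weight and $\fn_+[t]$-annihilation relations in the first line of \eqref{eq:Defining_rel.of_W(ell)} are the easy part: each $v_k$ satisfies $\fn_+[t]v_k=0$ and is an $\fh$-weight vector of weight $\ell_k\varpi_{i_k}$ with $(h\otimes t^m)v_k=0$ for $m>0$ (after the shift, the degree-zero action gives $f(c_k)\langle h,\ell_k\varpi_{i_k}\rangle$, but passing to the associated graded kills the positive-degree and constant-shift contributions, leaving $f(0)\langle h,\gl\rangle$). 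These transfer to $\bar w$ by the Leibniz rule for the coproduct together with the definition of the grading on the fusion product.

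The substantive point is the third family of relations, $\bigl(F_i(z)^r\bigr)_s\bar w=0$ for $s<-\sum_{k\in S_i}\min\{r,\ell_k\}$. Here I would exploit that $F_i(z)^r$ involves only the root vectors $f_i\otimes t^k$, i.e. only the $\mathfrak{sl}_{2,i}[t]$-action, so the computation reduces to each factor separately. Restricting to $\mathfrak{sl}_{2,i}$, the factor $W^{i_k,\ell_k}$ contributes nothing to $F_i$ unless $i_k=i$, i.e. $k\in S_i$, and there it behaves like the $\sl_2$-module $W^{\ell_k}$; the relevant vanishing bound $\min\{r,\ell_k\}$ is exactly the one appearing in the $\sl_2$ statement, Theorem \ref{Thm:MR1988973}. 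The key estimate is that on a tensor product of graded modules, the lowest power of $z^{-1}$ (equivalently the most negative $s$) for which $\bigl(F_i(z)^r\bigr)_s$ can act nontrivially is additive over the factors contributing to $\fg_{-\ga_i}$, and on the graded limit each such factor $W^{\ell_k}$ kills degrees below $-\min\{r,\ell_k\}$. I expect this additivity-of-vanishing-orders argument—carefully tracking how $\bigl(F_i(z)^r\bigr)_s$ distributes across the coproduct and passes to the associated graded—to be the main obstacle, since one must check that the filtration degree and the $z$-degree interact as claimed; the remaining relations and the cyclicity are routine.
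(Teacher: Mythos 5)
Your overall strategy --- show that the cyclic generator $\bar w$ of $W$ satisfies the relations \eqref{eq:Defining_rel.of_W(ell)} and then invoke the universal property of $M$ --- is exactly the paper's, and your treatment of the cyclicity (via \cite{MR1729359}) and of the first line of relations is fine. The gap is in the third family of relations, which you yourself flag as ``the main obstacle'' and do not prove. Moreover, your key claim is not quite right as stated: on the tensor product $(W^{i_1,\ell_1})_{c_1}\otimes\cdots\otimes(W^{i_p,\ell_p})_{c_p}$ itself, the operator $\bigl(F_i(z)^r\bigr)_s$ does \emph{not} annihilate $v_1\otimes\cdots\otimes v_p$ for very negative $s$ (each factor with $k\in S_i$ and $c_k\neq 0$ contributes $f_i/(z-c_k)$, whose expansion in $z^{-1}$ has infinitely many nonzero coefficients, so no ``lowest power of $z^{-1}$ acting nontrivially'' is additive over factors). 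The vanishing holds only in the associated graded, i.e.\ one must show $\bigl(F_i(z)^r\bigr)_s(v_1\otimes\cdots\otimes v_p)\in F^{\le -s-r-1}$ for $s<-\sum_{k\in S_i}\min\{r,\ell_k\}$. That is a genuine estimate, and your proposal stops exactly where it would have to be made.

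The paper closes this step with no new computation by routing through Theorem~\ref{Thm:MR1988973}. Since $U(\sl_{2,i}[t])v_k\subseteq W^{i_k,\ell_k}$ is isomorphic to $W^{\ell_k}$ for $k\in S_i$ and trivial otherwise, the $\sl_{2,i}[t]$-submodule generated by $v_1\otimes\cdots\otimes v_p$ sits inside $\bigotimes_{k\in S_i}\varphi_{c_k}^*W^{\ell_k}$ (up to trivial tensor factors), and the filtration by $U(\sl_{2,i}[t])$-degree is dominated by the filtration by $U(\fg[t])$-degree defining $W$. Hence there is a generator-to-generator $\sl_{2,i}[t]$-module homomorphism from the $\sl_2$-fusion product ${*}_{k\in S_i}W^{\ell_k}$ to $W$, and Theorem~\ref{Thm:MR1988973} already asserts that the generator of the source satisfies $\bigl(F_i(z)^r\bigr)_s v=0$ for $s<-\sum_{k\in S_i}\min\{r,\ell_k\}$; these relations therefore pass to $\bar w$. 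If you insist on your direct route, it can be completed: every nonzero term of $F_i(z)^r(v_1\otimes\cdots\otimes v_p)$ is a rational function of $z$ whose denominator divides $\prod_{k\in S_i}(z-c_k)^{\min\{r,\ell_k\}}$, so for $s$ below the stated bound the coefficient $\bigl(F_i(z)^r\bigr)_s(v_1\otimes\cdots\otimes v_p)$ is a linear combination of coefficients with strictly larger index and hence lies in strictly lower filtration degree --- but this amounts to reproving the relevant half of Theorem~\ref{Thm:MR1988973} rather than using it.
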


\begin{proof}
 Let $v_k$ ($1\leq k \leq p$) denote the generator of $W^{i_k,\ell_k}$.
 For $i \in I$, it is clear from the defining relations that the $\sl_{2,i}[t]$-submodule $U(\sl_{2,i}[t]) v_k\subseteq W^{i_k,\ell_k}$ 
 is isomorphic to 
 $W^{\ell_k}$ if $k \in S_i$, and a trivial module otherwise.
 Hence, letting $c_1,\ldots,c_p$ be pairwise distinct complex numbers, there is an injective $\mathfrak{sl}_{2,i}[t]$-module homomorphism
 \[ \bigotimes_{k \in S_i} \varphi_{c_k}^*W^{\ell_k} \hookrightarrow \bigotimes_{k=1}^p \varphi_{c_k}^*W^{i,\ell_k}.
 \]
 By the definition of the fusion product, this induces, for every $i \in I$, an $\sl_{2,i}[t]$-module homomorphism from the fusion product 
 ${\text{{\LARGE{\lower0.2ex\hbox{$*$}}}}}_{k\in S_i}W^{\ell_k}$
 to $W$. 
 Then it follows from Theorem \ref{Thm:MR1988973} that the generator of $W$ satisfies the relations 
 (\ref{eq:Defining_rel.of_W(ell)}).
 Hence there is a $\fg[t]$-module homomorphism from $M$ to $W$, which is obviously surjective.
 The lemma is proved.
\end{proof}


Since both $M$ and $W$ are finite-dimensional $\fg$-modules, Lemma \ref{eq:exsitence_of_surjection}
implies that, for the proof of Theorem \ref{Thm:current}, it suffices to show the inequalities 
\[ \Big[M : V(\mu)\Big] \leq \Big[ W : V(\mu)\Big]
\]
of multiplicities
as $\fg$-modules for all $\mu \in P^+$.
To show this, we will give an upper bound of $\Big[M :V(\mu)\Big]$ below.

To begin with, note that the following equality follows from the finite-dimensional representation theory of $\fg$:
\begin{equation}\label{eq:inequality_of_M}
 \Big[M:V(\mu)\Big] = \dim \Big(M / \fn_-M\Big)_\mu.
\end{equation}
Here $M/\fn_-M$ is $P$-graded by the action of $\fh$.

To shorten the notation we write $U^-$ for $U(\fn_-[t])$ from now on.
Let $\mathcal{I}$ be the left $U^-$-ideal generated by the vectors
\[ \Big\{\Big(F_i(z)^r\Big)_s \Bigm| i\in I, r >0, s<-\sum_{k \in S_i} \min\{r, \ell_k\}\Big\}.
\]
The $\fn_-[t]$-module $U^-/\mathcal{I}$ is naturally graded by $-Q^+$,
and obviously there exists a surjective $\fn_-[t]$-module homomorphism from $U^-/\mathcal{I}$ to $M$,
which maps $\Big(U^-/\mathcal{I}\Big)_{-\ggg}$ for $\ggg \in Q^+$ onto $M_{\gl-\ggg}$.
Then this homomorphism yields the following surjective linear map
\[ \Big(U^-/\big(\fn_-U^- + \mathcal{I}\big)\Big)_{-\ggg} \twoheadrightarrow \Big(M/\fn_-M\Big)_{\gl -\ggg},
\]
which implies the inequality
\begin{equation}\label{eq:inequality_of_dim}
 \dim\Big(U^-/\big(\fn_-U^- + \mathcal{I}\big)\Big)_{-\ggg} \geq \Big[M : V(\gl-\ggg)\Big]
\end{equation}
by (\ref{eq:inequality_of_M}).

Next we will define a quotient space of $U(\bL\fn_-)$ which is linearly isomorphic to $U^-/\big(\fn_-U^- + \mathcal{I}\big)$.
In the sequel we write $U^-_{\bL} = U(\bL\fn_-)$ for ease of notation.
Fix a sufficiently large positive integer $N$,
and let $\wti{F}_i(z) \in U^-_{\bL}((z))$ be the formal Laurent series defined by
\[ \wti{F}_i(z) = \sum_{k=-\infty}^N (f_i \otimes t^k) z^{-k-1}.
\]
Denote by $\mathcal{J}$ the left $U^-_{\bL}$-ideal generated by the vectors
\[ \{f_i \otimes t^k \mid i \in I, k > N\} \cup\Big\{\,\Big(\wti{F}_i(z)^r\Big)_s \Bigm| i \in I, r >0, 
   s<-\sum_{k \in S_i} \min\{r, \ell_k\}\Big\}.
\]
The definition of $\mathcal{J}$ does not depend on the choice of $N$,
since 
\[ \wti{F}_i(z)_{-k-1} = f_i \otimes t^k \in \mathcal{J} \ \text{ for } \#\,S_i \leq k \leq N.
\] 


\begin{Lem}\label{Lem:coincidence_of_ideals}
 {\normalfont(i)} The left ideal $U^-_{\bL}\,\mathcal{I}$ coincides with $\mathcal{J}$.\\
 {\normalfont(ii)} There exists a linear isomorphism 
  \[ U^-_{\bL}\big/\big(\fn_-[t^{-1}]U^-_{\bL} + \mathcal{J} \big) \stackrel{\sim}{\to} U^-\big/\big(\fn_-U^-  +\mathcal{I}\big)
  \]
  preserving their $-Q^+$-gradings.
\end{Lem}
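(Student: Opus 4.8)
The plan is to establish (i) first and then deduce (ii) as a consequence by comparing the two quotients degree by degree. For part (i), the key observation is that both $\mathcal{I}$ and $\mathcal{J}$ are built from the \emph{same} generating relations $\bigl(F_i(z)^r\bigr)_s$ and $\bigl(\wti{F}_i(z)^r\bigr)_s$, the only difference being the ambient algebra ($U^-=U(\fn_-[t])$ versus $U^-_{\bL}=U(\bL\fn_-)$) and the auxiliary generators $f_i\otimes t^k$ for $k>N$ that appear in $\mathcal{J}$. First I would check the inclusion $U^-_{\bL}\mathcal{I}\subseteq \mathcal{J}$: since $\wti{F}_i(z)$ and $F_i(z)$ agree in the coefficients of $z^{-k-1}$ for $0\le k\le N$, and since $f_i\otimes t^k\in\mathcal{J}$ for $k>N$, one can match each generator $\bigl(F_i(z)^r\bigr)_s$ of $\mathcal{I}$ against the corresponding $\bigl(\wti{F}_i(z)^r\bigr)_s$ modulo terms that involve $f_i\otimes t^k$ with $k>N$, which already lie in $\mathcal{J}$. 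Conversely, for $\mathcal{J}\subseteq U^-_{\bL}\mathcal{I}$, the generators $f_i\otimes t^k$ ($k>N$) are handled using the remark preceding the lemma, namely $\wti{F}_i(z)_{-k-1}=f_i\otimes t^k$ together with the displayed fact that these already sit in $\mathcal{J}$; and the remaining $\bigl(\wti{F}_i(z)^r\bigr)_s$ are reduced to the $F_i$-versions modulo the high-degree generators. I expect this matching of negative-mode and positive-mode coefficients to be the most delicate bookkeeping, because $\wti{F}_i(z)$ has infinitely many negative powers of $t$ whereas $F_i(z)$ has none; the right framework is to track everything in the $-Q^+$-grading and in the $t$-degree simultaneously.

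Granting (i), I would prove (ii) by exhibiting the natural candidate map. The inclusion $\fn_-[t]\hookrightarrow \bL\fn_-$ induces $U^-\hookrightarrow U^-_{\bL}$, and I claim the composite
\[
 U^- \hookrightarrow U^-_{\bL} \twoheadrightarrow U^-_{\bL}\big/\big(\fn_-[t^{-1}]U^-_{\bL}+\mathcal{J}\big)
\]
descends to the desired isomorphism. The point is that $\fn_-U^-$ maps into the target (trivially, as $\fn_-\subseteq\fn_-[t]$) and $\mathcal{I}$ maps into $\mathcal{J}\subseteq\mathcal{J}+\fn_-[t^{-1}]U^-_{\bL}$ by part (i), so the map factors through $U^-/(\fn_-U^-+\mathcal{I})$ and manifestly respects the $-Q^+$-gradings.

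The main obstacle is surjectivity and injectivity of this induced map, i.e.\ showing that modulo $\fn_-[t^{-1}]U^-_{\bL}+\mathcal{J}$ every element of $U^-_{\bL}$ is represented by one of $U^-$, and that no nonzero class of $U^-/(\fn_-U^-+\mathcal{I})$ dies. For surjectivity I would argue that any monomial in $U^-_{\bL}$ involving a factor $f_\ga\otimes t^{k}$ with $k<0$ can be pushed to the left past the other factors using the commutation relations in $\bL\fn_-$, producing a leading term in $\fn_-[t^{-1}]U^-_{\bL}$ plus lower-length corrections, so that one reduces to monomials with only nonnegative powers of $t$; a length/PBW induction closes this. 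Injectivity is the genuinely hard part: it requires that the extra relations in $\mathcal{J}$ (the generators $f_i\otimes t^k$, $k>N$, and the negative-mode coefficients $\bigl(\wti{F}_i(z)^r\bigr)_s$) impose \emph{nothing new} on the nonnegative-$t$ part beyond what $\mathcal{I}$ already imposes. I expect to control this by a PBW-type filtration argument comparing associated graded spaces, showing that the extra generators of $\mathcal{J}$ only interact with the factor $\fn_-[t^{-1}]U^-_{\bL}$ and hence contribute trivially to the quotient in each fixed $-Q^+$-degree. Making this last comparison rigorous—rather than merely plausible—is where the real work lies, and I anticipate it being deferred to the Appendix along with the other functional-realization arguments.
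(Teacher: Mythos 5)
Your overall architecture (prove (i) by comparing generators, then deduce (ii) from a natural map between the quotients) matches the paper's, but both halves have a gap at exactly the point where the real content lies. In (i), your central claim --- that $\big(F_i(z)^r\big)_s$ and $\big(\wti{F}_i(z)^r\big)_s$ agree ``modulo terms that involve $f_i\otimes t^k$ with $k>N$'' --- is false. Writing $F_i'(z)=\sum_{k=0}^N(f_i\otimes t^k)z^{-k-1}$, the passage from $F_i(z)$ to $F_i'(z)$ does only cost terms with a factor $f_i\otimes t^k$, $k>N$ (and these can be slid to the right end, hence into the left ideal $\mathcal{J}$, because all the $f_i\otimes t^a$ commute, lying in the abelian subalgebra $\fg_{-\ga_i}\otimes\C[t,t^{-1}]$). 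But the discrepancy between $F_i'(z)^r$ and $\wti{F}_i(z)^r$ consists of cross terms containing factors $f_i\otimes t^a$ with $a<0$, and these are \emph{not} generators of $\mathcal{J}$. The missing idea is the one the paper's induction turns on: move such a factor to the left to get $(f_i\otimes t^a)\cdot\big(F_i'(z)^{k-1}\wti{F}_i(z)^{r-k}\big)_{s+a+1}$, and observe that the index set $Z_i=\{(r,s)\mid r>0,\ s<-\sum_{k\in S_i}\min\{r,\ell_k\}\}$ is stable under $(r,s)\mapsto(r-1,s+a+1)$ for $a<0$, since $a+1\le 0$ and $\min\{r-1,\ell_k\}\le\min\{r,\ell_k\}$. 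Without this stability the ``delicate bookkeeping'' you flag does not close, in either direction of the containment.

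In (ii), your candidate map is the inverse of the paper's (the paper projects $U^-_{\bL}$ onto $U^-$ along the PBW decomposition $U^-_{\bL}=(t^{-1}\fn_-[t^{-1}])U^-_{\bL}\oplus U^-$ and then passes to the quotient); the direction is immaterial, but you explicitly leave injectivity --- equivalently the identity $(t^{-1}\fn_-[t^{-1}])U^-_{\bL}+\fn_-U^-+\mathcal{I}=\fn_-[t^{-1}]U^-_{\bL}+\mathcal{J}$ of kernels --- as something you ``expect'' to control by an unspecified associated-graded argument deferred to the appendix. That identity is the entire content of (ii), and it is not deferred in the paper: it follows in a few lines from (i) together with two observations your sketch does not contain, namely that $\ad(\fn_-)$ preserves $t^{-1}\fn_-[t^{-1}]$ (whence $\fn_-[t^{-1}]U^-_{\bL}\subseteq(t^{-1}\fn_-[t^{-1}])U^-_{\bL}+\fn_-U^-$), and that $\mathcal{J}=U^-_{\bL}\,\mathcal{I}=\big((t^{-1}\fn_-[t^{-1}])U^-_{\bL}\oplus U^-\big)\mathcal{I}\subseteq(t^{-1}\fn_-[t^{-1}])U^-_{\bL}+\mathcal{I}$. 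As written, the proposal identifies the right statements to prove but supplies neither of the two arguments that actually prove them.
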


\begin{proof}
 (i) We show $U^-_{\bL}\, \mathcal{I} \subseteq \mathcal{J}$ (the other containment is similarly proved).
  For $i \in I$, set 
  \[ Z_i = \big\{(r,s)\bigm| r>0, s < -\sum_{k \in S_i} \min\{r,\ell_k\}\big\} \subseteq \Z_{>0} \times \Z,
  \]
  and $F_i'(z) = \sum_{k=0}^N (f_i \otimes t^k)z^{-k-1}$.
  Since $f_i \otimes t^k \in \mathcal{J}$ for $k>N$, it suffices to show that $\Big(F_i'(z)^r\Big)_s \in \mathcal{J}$ if $(r,s) \in Z_i$.
  We show by the induction on $k$ that
  \[ \Big(F_i'(z)^{k}\wti{F}_i(z)^{r-k}\Big)_s \in \mathcal{J} \ \ \text{if} \ \ (r,s) \in Z_i,
  \] 
  which with $k = r$ completes the proof.
  For $k=0$ there is nothing to prove.  When $k >0$, we have
  \begin{align*}
   \Big(F_i'&(z)^k\wti{F}_i(z)^{r-k}\Big)_s \\
   &= \Big(F_i'(z)^{k-1}\wti{F}_i(z)^{r-k+1} - \sum_{a< 0}z^{-a-1}(f_i\otimes t^{a})
     F_i'(z)^{k-1}\wti{F}_i(z)^{r-k}\Big)_s \\
   &= \Big(F_i'(z)^{k-1}\wti{F}_i(z)^{r-k+1}\Big)_s - \sum_{a<0}f_i\otimes t^{a}\Big(F_i'(z)^{k-1}\wti{F}_i(z)^{r-k}
    \Big)_{s+a+1}.
  \end{align*}
  Since $(r,s) \in Z_i$ implies $(r-1, s+a+1) \in Z_i$ for all $a < 0$, the right-hand side belongs to $\mathcal{J}$ by the induction 
  hypothesis. The proof is complete.\\
 (ii) By the Poincar\'{e}-Birkhoff-Witt theorem, we have 
   \[ U^-_{\bL} = (t^{-1}\fn_-[t^{-1}])U^-_{\bL} \oplus U^-.
   \]
   Consider the composition 
   \[ U^-_{\bL} \twoheadrightarrow U^- \twoheadrightarrow U^-\big/\big(\fn_-U^-  +\mathcal{I}\big),
   \]
   where the first map is the projection with respect to the above decomposition, and the second is the canonical one.
   Obviously the kernel of this linear map is 
   \[ (t^{-1}\fn_-[t^{-1}])U^-_{\bL} + \fn_- U^- + \mathcal{I},
   \]
   which we denote by $\mathcal{K}$.
   It suffices to show that
   \begin{equation*}\label{eq:equality_of_ideals}
    \mathcal{K} = \fn_-[t^{-1}]U^-_{\bL} + \mathcal{J},
   \end{equation*}
   and the containment $\subseteq$ is clear from (i).
   We show the other containment.
   Since $\ad(\fn_-)(t^{-1}\fn_-[t^{-1}]) \subseteq t^{-1}\fn_-[t^{-1}]$, 
   we have 
   \[ \fn_-U^-_{\bL} = \fn_-\Big((t^{-1}\fn_-[t^{-1}])U^-_{\bL} \oplus U^-\Big) \subseteq (t^{-}\fn_-[t^{-1}])U^-_{\bL} + \fn_-U^- \subseteq
      \mathcal{K},
   \] 
   and hence $\fn_-[t^{-1}]U^-_{\bL} \subseteq \mathcal{K}$ holds. 
   On the other hand, it also follows from (i) that
   \[ \mathcal{J} = U^-_{\bL} \, \mathcal{I} = \big((t^{-1}\fn_-[t^{-1}])U^-_{\bL} \oplus U^-\big)\mathcal{I}
      \subseteq (t^{-1}\fn_-[t^{-1}])U^-_{\bL} + \mathcal{I} \subseteq \mathcal{K}.
   \]
   Therefore $\mathcal{K} \supseteq \fn_-[t^{-1}]U^-_{\bL} + \mathcal{J}$ follows, as required.
\end{proof}

In conclusion, the proof of Theorem \ref{Thm:current} is now reduced to the following proposition by (\ref{eq:inequality_of_dim})
and Lemma \ref{Lem:coincidence_of_ideals} (ii), which will be proved in Subsection \ref{subsection:current_complete}.

\begin{Prop}\label{Prop:critical_inequality}
 For each $\ggg \in Q^+$ such that $\gl - \ggg \in P^+$, the inequality
 \[ \dim\Big(U^-_{\bL}\big/\big(\fn_-[t^{-1}]U^-_{\bL} + \mathcal{J} \big)\Big)_{-\ggg} \leq \Big[W : V(\gl - \ggg)\Big]
 \]
 holds.
\end{Prop}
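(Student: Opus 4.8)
The proposition requires an upper bound on the graded dimension of the quotient $U^-_{\bL}/(\fn_-[t^{-1}]U^-_{\bL} + \mathcal{J})$ by the multiplicity $[W : V(\gl-\ggg)]$. Since the left-hand side is built from $U(\bL\fn_-)$ modulo an explicit ideal $\mathcal{J}$ generated by coefficients of powers of the currents $\wti{F}_i(z)$, the natural strategy is to pass to the graded dual and work with functions rather than with the universal enveloping algebra directly. Concretely, I would realize the dual of $U(\bL\fn_-)$ (or rather of the relevant quotient) via the \emph{functional realization} mentioned in the introduction, following \cite{MR1275728,MR1934307,MR2215613,MR2290922}. In that language, a weight-$(-\ggg)$ component of $U(\bL\fn_-)$ is dual to a space of symmetric (Laurent) polynomials or rational functions in sets of variables indexed by the simple roots occurring in $\ggg$; the quotient by $\fn_-[t^{-1}]U^-_{\bL}$ corresponds to restricting to polynomials, and the ideal $\mathcal{J}$ dualizes to \emph{vanishing conditions} at coincidence points of the variables.

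The key translation is that the relations $(\wti{F}_i(z)^r)_s = 0$ for $s < -\sum_{k\in S_i}\min\{r,\ell_k\}$ become, after dualizing, prescribed orders of vanishing when $r$ of the variables of color $i$ are set equal. This is exactly the kind of ``wheel condition'' or divisibility constraint studied in the functional realization of fusion products of $W^{i,\ell}$'s. So the first step is to set up the pairing between the graded piece $\big(U^-_{\bL}/(\fn_-[t^{-1}]U^-_{\bL}+\mathcal{J})\big)_{-\ggg}$ and a concrete space of symmetric polynomials satisfying these vanishing conditions, and to show the pairing is nondegenerate, so that the dimension I want to bound equals the dimension of that polynomial space. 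The second step is to bound the dimension of the polynomial space. Here I expect to exhibit a spanning set indexed by combinatorial data (monomials subject to the vanishing constraints, organized by the partition data $\{\ell_k \mid k\in S_i\}$), and count it. The third step is to identify this count with $[W : V(\gl-\ggg)]$, using that $W$ is a fusion product of KR modules whose $\fg$-module structure (hence its decomposition multiplicities) is already known from \cite{MR1836791,MR2238884} and is independent of the fusion parameters.

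The main obstacle, I expect, is the passage through the functional realization and the verification that the vanishing conditions dual to $\mathcal{J}$ precisely cut out the right polynomial space — in particular matching the bound $\sum_{k\in S_i}\min\{r,\ell_k\}$ to the correct order of vanishing and checking that no further relations are needed. Establishing nondegeneracy of the pairing (so that one genuinely computes, not merely bounds, the dual dimension) and controlling the interaction between variables of different colors through the off-diagonal Cartan entries are the delicate points; the color-diagonal part should reduce, via the $\sl_2$ computation of Theorem \ref{Thm:MR1988973} and the combinatorics in Remark \ref{Remark_for_current}, to counting already understood in the rank-one case. Because these analytic/combinatorial verifications about the functional realization are technical, I would isolate them as separate lemmas and defer their proofs to Appendix \ref{Appendix}, exactly as the paper's plan indicates, keeping the main argument a clean chain: dualize, identify with a polynomial space, bound its dimension, and match against the known fusion-product character.
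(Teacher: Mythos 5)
Your plan is essentially the paper's own argument: dualize via the functional realization so that the quotient by $\fn_-[t^{-1}]U^-_{\bL}+\mathcal{J}$ becomes a space of symmetric rational functions with degree and pole-order constraints (the constraint being exactly order $\le \sum_{k\in S_i}\min\{r,\ell_k\}$ at $z=0$ when $r$ variables of color $i$ collide), bound its dimension by a combinatorial count via a filtration, and match that count with $[W:V(\gl-\ggg)]$, with the technical verifications deferred to the appendix. The one input you should name explicitly is that the resulting upper bound is a fermionic sum whose equality with the tensor-product multiplicity is the proven Kirillov--Reshetikhin/fermionic formula (\cite{MR1745263,MR1993360,MR2254805,MR2428305}); that identity, not just the known $\fg$-character of $W$, is what closes the argument.
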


\subsection{Functional realization of the dual space of \boldmath{$U^-_{\bL}$}}

Proposition \ref{Prop:critical_inequality} is proved using the functional realization of the dual space of 
$U^-_{\bL}$ introduced in \cite{MR1275728} (see also \cite{MR1934307}, \cite{MR2215613}, \cite{MR2290922}).
Here we give a detailed exposition of the realization. 

For $i, j \in I$, write $\hat{c}_{ij} = \max\{1-c_{ij}, 1\}$.
Let $\ggg = \sum_{i \in I} m_i \ga_i \in Q^+$,
and define $\mathcal{U}_\ggg$ by the subspace of the space of rational functions in the variables 
\[ \bm{x}_\ggg =\big\{x_r^{(i)} \bigm| i\in I, 1\leq r \leq m_i\big\},
\]
consisting of the functions $g(\bm{x}_\ggg)$ of the form
\[ g(\bm{x}_\ggg)=\frac{g'(\bm{x}_\ggg)}{\prod_{i<j} \prod_{\begin{smallmatrix} 1\leq r\leq m_i
   \\ 1\leq s \leq m_j\end{smallmatrix}}  (x_r^{(i)} - x_s^{(j)})},
\] 
where $g'(\bm{x}_\ggg) \in \C\big[\,(x_r^{(i)})^{\pm 1}\bigm| i,r\,\big]$ is a Laurent polynomial, 
symmetric under the exchange of variables $x_{r}^{(i)} \leftrightarrow x_s^{(i)}$
with the same superscript, and vanishes provided $x_1^{(i)} = x_2^{(i)} = \cdots = x_{\hat{c}_{ij}}^{(i)} = x_1^{(j)}$ for $i\neq j$.
We will write $\bm{x}_\ggg$ simply as $\bm{x}$ when $\ggg$ is obvious from the context.
Let $\mU = \bigoplus_{\ggg\in Q^+} \mU_\ggg$.

For $\ggg \in Q^+$, $i \in I$ and $k \in \Z$, define a map $R_{i,k}\colon \mathcal{U}_\ggg \to \mathcal{U}_{\ggg-\ga_i}$ by
\[ R_{i,k}\big(g(\bm{x})\big) = \Res_{x_1^{(i)}} (x_1^{(i)})^{k} g(\bm{x}),
\]
and extend it on $\mU$ linearly.
Here the residue $\Res_{x}\, g(\bm{x})$ for a variable $x$ is defined as follows: 
first regard $g(\bm{x})$ as a formal Laurent series in $x$ by expanding all $(x-x_r^{(i)})^{-1}$ in positive power of 
$x/x_r^{(i)}$, and then take the coefficient of $x^{-1}$. 

\begin{Rem}\normalfont
 Precisely to say, in order to define $R_{i,k}\big(g(\bm{x})\big)$ as a function in $\mathcal{U}_{\ggg-\ga_i}$,
 we need to reindex the variables $\{x_2^{(i)},\ldots,x_{m_i}^{(i)}\}$ to $\{x_1^{(i)},\ldots,x_{m_i-1}^{(i)}\}$ 
 (any reindex produces a same function because of the symmetry). 
 In the sequel, we always assume such an obvious reindexing is done, if necessary, without any mention.
\end{Rem}

If a rational function $g(x,y)$ in two variables $x$ and $y$ has no poles except at $x=0$, $y=0$ and $x=y$, 
it follows  from the Cauthy's residue theorem that
\begin{equation}\label{eq:Cauthy's_residue_thm}
 \big(\Res_{y} \Res_{x}- \Res_{x}\Res_{y}\big)g(x,y)= -\Res_{y}\Res_{x=y}\,g(x,y).
\end{equation}
From this, a useful formula is deduced as follows.
Suppose that $h \in \Z_{>0}$, $i_1,\ldots,i_h \in I$ and $k_1,\ldots,k_h \in \Z$ are given.
Set $\gb=\ga_{i_1}+\cdots +\ga_{i_h}$,
and rename the variables $\bm{x}_{\gb}$ into $\{x_1,\ldots,x_h\}$ by 
\begin{equation*}\label{eq:rename}
 x_r = x^{(i_r)}_{\#\{s \leq r \mid\, i_s=i_r\}} \ \text{ for } 1 \leq r \leq h.
\end{equation*}
Then in view of (\ref{eq:Cauthy's_residue_thm}), we have
\[ \begin{split}
 [R_{i_h,k_h},[&R_{i_{h-1},k_{h-1}},\ldots,[R_{i_{2},k_{2}},R_{i_1,k_1}]\!\ldots]]\,g(\bm{x})\\
 &=(-1)^{h-1}\Res_{x_h}\Big( x_h^{k_h}\Res_{x_{h-1}=x_h}\Big(x_{h-1}^{k_{h-1}}\cdots \Res_{x_1=x_2} \Big(x_1^{k_1} g(\bm{x})\Big)\!\cdots
 \!\Big)\!\Big)
\end{split}\]
for $g(\bm{x}) \in \mU$.
Moreover for each $2\leq r \leq h-1$, it can be proved that the function 
\[ x_r^{k_r}\Res_{x_{r-1}=x_r} \Big(x_{r-1}^{k_{r-1}}\cdots \Res_{x_1=x_2}\Big( x_1^{k_1}g(\bm{x})\Big)\!\cdots\!\Big)
\]
has at most a simple pole at $x_r=x_{r+1}$ (we will give a proof of this fact 
in Appendix \ref{subsection:simplicity} since we have not found one in the literature).
Hence the above formula is rewritten in the following simpler form:
\begin{equation}\label{eq:convenient}
  \begin{split}
   [R_{i_h,k_h},[R_{i_{h-1},k_{h-1}}&,\ldots,[R_{i_2,k_2},R_{i_1,k_1}]\!\ldots]]\,g(\bm{x}) \\
   &=\Res_{x_h} \Big\{\prod_{r=2}^{h} (x_{r}-x_{r-1}) g(\bm{x})\Big|_{ x_1=x_2=\cdots=x_h}\cdot (x_h)^{k_1+\cdots+k_h}\Big\}.
 \end{split}
\end{equation}

In the sequel, we write $f_{\ga,k} = f_\ga \otimes t^k$ and $f_{i,k} = f_i \otimes t^k$ to shorten the notation.
Define a bilinear map $\langle \ , \ \rangle \colon U^-_{\bL} \times \mU \to \mU$ by 
\[ \Big\langle f_{i_1,k_1}f_{i_2,k_2}\cdots f_{i_N,k_N}, g(\bm{x})\Big\rangle = R_{i_1,k_1}R_{i_2,k_2}\cdots R_{i_N,k_N}\,g(\bm{x}),
\]
where $i_1,\ldots,i_N \in I$ and $k_1,\ldots,k_N \in \Z$.

\begin{Prop}\label{Prop:bilinear_well}
 The bilinear map $\langle \ , \ \rangle$ is well-defined. 
\end{Prop}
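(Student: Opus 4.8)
The plan is to read the proposition representation-theoretically. The prescription $\langle f_{i_1,k_1}\cdots f_{i_N,k_N},\,g\rangle = R_{i_1,k_1}\cdots R_{i_N,k_N}\,g$ assigns an operator on $\mU$ to each \emph{word} in the generators $f_{i,k}$, and the pairing is well-defined exactly when this assignment factors through $U^-_{\bL}$, i.e.\ when $f_{i,k}\mapsto R_{i,k}$ extends to an algebra homomorphism $U^-_{\bL}\to\End(\mU)$ turning $\mU$ into a left $U^-_{\bL}$-module (left multiplication by $f_{j,m}$ corresponding to applying $R_{j,m}$). By the universal property of $U^-_{\bL}=U(\bL\fn_-)$, this amounts to checking that the operators $R_{i,k}$ satisfy the defining relations of the Lie algebra $\bL\fn_-$ under the commutator bracket. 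First I would record the preliminary point that each $R_{i,k}$ really maps $\mU_\ggg$ into $\mU_{\ggg-\ga_i}$: symmetry of the image is immediate from the definition of the residue, whereas preservation of the denominator shape and of the vanishing conditions is itself a residue computation belonging to the functional-realization analysis deferred to Appendix \ref{Appendix}. This is what guarantees that the iterated operators land in $\mU$, so that all compositions make sense.

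The engine of the proof is the residue identity (\ref{eq:convenient}), which I would use to evaluate every iterated commutator of the $R$'s at once: it expresses $[R_{i_h,k_h},[\ldots,[R_{i_2,k_2},R_{i_1,k_1}]\ldots]]\,g$ as a single residue of $\prod_{r=2}^{h}(x_r-x_{r-1})\,g(\bm{x})$ along the diagonal $x_1=\cdots=x_h$, paired against $x_h^{\,k_1+\cdots+k_h}$. Two consequences are immediate. The resulting operator depends on the exponents only through their sum $k_1+\cdots+k_h$, which is precisely the behaviour of the bracket $[x\otimes t^a,\,y\otimes t^b]=[x,y]\otimes t^{a+b}$ in $\bL\fn_-$; in particular the ``degree-shift'' relations $[f_{i,k},f_{j,l}]=[f_{i,k-1},f_{j,l+1}]$ (which are genuinely part of the presentation of $\bL\fn_-$ and do \emph{not} follow from the abelian and Serre relations alone) hold automatically for the $R$'s. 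It therefore remains only to verify the two families of vanishing relations.

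For the abelian relations $[f_{i,k},f_{i,l}]=0$ I would apply (\ref{eq:convenient}) with $h=2$ and $i_1=i_2=i$: functions in $\mU$ have no poles along $x_1^{(i)}=x_2^{(i)}$, since the denominator only carries cross factors $x_r^{(i)}-x_s^{(j)}$ with $i\neq j$, so $g$ is regular on that diagonal and the prefactor $(x_2-x_1)$ kills the residue, giving $[R_{i,k},R_{i,l}]=0$. For the Serre relations with $i\neq j$ and $s=1-c_{ij}=\hat{c}_{ij}$, the relevant element of $\bL\fn_-$ is the fully nested bracket $[f_{i,k_1},[\ldots,[f_{i,k_s},f_{j,l}]\ldots]]=\big((\ad f_i)^{s}f_j\big)\otimes t^{k_1+\cdots+k_s+l}=0$. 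Via (\ref{eq:convenient}) the corresponding operator is a residue of $\prod(x_r-x_{r-1})\,g$ restricted to the diagonal of the $s$ variables $x_1^{(i)},\ldots,x_s^{(i)}$ together with the single $x_1^{(j)}$; but this diagonal is exactly the locus $x_1^{(i)}=\cdots=x_{\hat{c}_{ij}}^{(i)}=x_1^{(j)}$ on which the numerator $g'$ vanishes by the very definition of $\mU_\ggg$, and combined with the vanishing of the factors $x_r-x_{r-1}$ this forces the residue to be zero. (The case $c_{ij}=0$ is the instance $s=1$ of this argument.)

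I expect the main obstacle to be not the shape of these checks but the pole/zero bookkeeping underlying them. Because $g$ carries honest simple poles along the cross-diagonals $x_r^{(i)}=x_s^{(j)}$, the restrictions $g|_{x_1=\cdots=x_h}$ appearing above are \emph{a priori} singular, and one must control the exact order of the poles produced after each successive residue before one may conclude that the vanishing of $g'$ wins against them. This control is precisely the assertion that the intermediate functions acquire \emph{at most simple poles} at $x_r=x_{r+1}$, which is what legitimizes (\ref{eq:convenient}) in the first place and which the text proves in Appendix \ref{subsection:simplicity}. Accordingly, I would treat (\ref{eq:convenient}) together with the preservation $R_{i,k}(\mU_\ggg)\subseteq\mU_{\ggg-\ga_i}$ as the technical inputs to be imported from the appendix, and present the three relation checks above as the clean conceptual skeleton that reduces well-definedness of $\langle\ ,\ \rangle$ to them.
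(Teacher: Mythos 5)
Your proposal is correct and follows essentially the same route as the paper: reduce well-definedness to verifying that the operators $R_{i,k}$ satisfy the presentation of $U^-_{\bL}$ by the iterated Serre/abelian relations and the degree-shift relations, and check all of these at once via the residue formula (\ref{eq:convenient}) together with the symmetry and vanishing conditions built into the definition of $\mU$. The paper states these checks in one line ("easily deduced"), so your spelled-out pole/zero counting on the diagonals is simply a more explicit rendering of the same argument.
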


\begin{proof}
 Since $U^-_{\bL}$ is generated by $\{f_{i,k}\mid i \in I, k \in \Z\}$ with relations 
 \begin{align*}
  [f_{i,k_{\hat{c}_{ij}}},[f_{i,k_{\hat{c}_{ij}-1}},\ldots,[f_{i,k_1},f_{j,l}]\!\ldots]] &= 0 \ \ (i,j \in I, k_r,l \in \Z), \ \text{and}\\
  [f_{i_h,k_h},[f_{i_{h-1},k_{h-1}},\ldots,[f_{i_2,k_2},f_{i_1,k_1}]\!\ldots]] &= [f_{i_h,l_h},[f_{i_{h-1},l_{h-1}},\ldots,
  [f_{i_2,l_2},f_{i_1,l_1}]\!\ldots]]\\
  (h \in \Z_{>0}, i_r \in I, k_r, l_r& \in \Z \text{ such that } k_1+\cdots+k_h = l_1+\cdots + l_h),  
 \end{align*}
 it suffices to show for arbitrary $g \in \mathcal{U}$ that
 \begin{align*}
  [R_{i,k_{\hat{c}_{ij}}},[R_{i,k_{\hat{c}_{ij}-1}},\ldots,[R_{i,k_1}, R_{j,l}]\!\ldots]]\,g &= 0, \ \text{and}\\
  [R_{i_h,k_h},[R_{i_{h-1},k_{h-1}},\ldots,[R_{i_2,k_2},R_{i_1,{k_1}}]\!\ldots]]\,g\!
   &=\! [R_{i_h,l_h},[R_{i_{h-1},l_{h-1}},\ldots,[R_{i_2,l_2},R_{i_1,l_1}]\!\ldots]]\, g.
 \end{align*}
 These are easily deduced from (\ref{eq:convenient}) and the definition of $\mU$, and hence the assertion is proved.
\end{proof}

For $\ggg\in Q^+$, $\langle \ , \ \rangle$ defines a pairing $\big(U^-_{\bL}\big)_{-\ggg} \times \mU_\ggg \to \C$.
Note that $\big(U^-_{\bL}\big)_{-\ggg}$ is naturally $\Z$-graded
by the degree of polynomials, and so is $\mU_\ggg$ by the total degree.
It is easily checked that, 
for $F \in \big(U^-_{\bL}\big)_{-\ggg}^k$ and $g \in 
\mathcal{U}_\ggg^l$, $\langle F,g\rangle = 0$ unless $k + l=-\het(\ggg)$.

Let $\ol{\mathcal{U}}_\ggg$ be the subspace of $\mathcal{U}_\ggg$ consisting of $g(\bm{x})$ such that 
\[ \langle f_{i,k}\big(U^-_{\bL}\big)_{-\ggg+\ga_i}, g(\bm{x}) \rangle=0 \ \ \ \text{for all} \ i\in I, k \in \Z_{\le 0}.
\]
By the definition of the pairing, this is equivalent to that, if we regard $g(\bm{x})$ as a formal Laurent series in $(x_1^{(i)})^{-1}$
by expanding all $(x_1^{(i)}-x_s^{(j)})^{-1}$ in positive power of $x_s^{(j)}/x_1^{(i)}$, then the coefficient of 
$(x_1^{(i)})^{-k-1}$ is $0$ for all $k\in \Z_{\leq 0}$.
Hence the following lemma holds.

\begin{Lem}\label{Lem:description_of_barU}
  We have
  \[ \ol{\mathcal{U}}_\ggg = \big\{ g(\bm{x}) \in \mathcal{U}_\ggg \bigm| \deg_{x_1^{(i)}}  g(\bm{x}) \leq -2 \text{ for all } i \in I \big\},
  \]
 where $\deg_{x_1^{(i)}}$ is defined by setting $\deg_{x_1^{(i)}} x_r^{(j)} = \gd_{(i,1),(j,r)}$ and 
 \[ \deg_{x_1^{(i)}} (x_r^{(j)} - x_s^{(j')})^{-1} = \begin{cases} -1 & \text{ if } (j,r) = (i,1) \text{ or } (j',s)=(i,1),\\
                                                                  0  & \text{ otherwise}.
                                 \end{cases}
 \]
\end{Lem}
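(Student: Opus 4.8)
The plan is to take as established the reformulation stated in the paragraph immediately preceding the lemma: a function $g(\bm{x}) \in \mathcal{U}_\ggg$ lies in $\ol{\mathcal{U}}_\ggg$ if and only if, for every $i \in I$, upon expanding $g(\bm{x})$ as a Laurent series in decreasing powers of $x_1^{(i)}$ (expanding each factor $(x_1^{(i)} - x_s^{(j)})^{-1}$ as $\sum_{n \ge 0}(x_s^{(j)})^n (x_1^{(i)})^{-n-1}$), the coefficient of $(x_1^{(i)})^{-k-1}$ vanishes for all $k \in \Z_{\le 0}$. Since $-k-1$ runs over all integers $\ge -1$ as $k$ runs over $\Z_{\le 0}$, this condition says precisely that in this expansion every power $(x_1^{(i)})^m$ with $m \ge -1$ has vanishing coefficient. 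The remaining work is then purely to match this vanishing condition with the degree condition in the statement.

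First I would identify the top exponent of that expansion with $\deg_{x_1^{(i)}}g(\bm{x})$. Writing $g(\bm{x}) = g'(\bm{x})/D$ with $D = \prod_{j<j'}\prod_{r,s}(x_r^{(j)} - x_s^{(j')})$ the fixed denominator from the definition of $\mathcal{U}_\ggg$, I note that $g' = gD$ is uniquely determined by $g$, so $\deg_{x_1^{(i)}}g$ is well-defined through the additive rules of the lemma, namely $\deg_{x_1^{(i)}}g = \deg_{x_1^{(i)}}g' + \deg_{x_1^{(i)}}D^{-1}$. The factors of $D$ involving $x_1^{(i)}$ are exactly those of the form $(x_1^{(i)} - x_s^{(j')})$ with $j' > i$ and $(x_r^{(j)} - x_1^{(i)})$ with $j < i$; each contributes a leading term $(x_1^{(i)})^{-1}$ with coefficient $\pm 1$ upon expansion, in agreement with the rule $\deg_{x_1^{(i)}}(\,\cdot\,)^{-1} = -1$. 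Hence the Laurent expansion of $g$ in decreasing powers of $x_1^{(i)}$ has top exponent exactly $d := \deg_{x_1^{(i)}}g$, and its leading coefficient is the leading coefficient of $g'$ in $x_1^{(i)}$ divided by the product of those factors of $D$ not involving $x_1^{(i)}$, which is a nonzero rational function in the remaining variables.

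It then follows that the coefficient of every power $(x_1^{(i)})^m$ with $m \ge -1$ vanishes if and only if $d \le -2$: if $d \ge -1$ the nonzero leading coefficient sits at $m = d \ge -1$ and violates the condition, whereas if $d \le -2$ every nonzero term already has exponent $\le -2$. Thus for each fixed $i$ the defining orthogonality condition of $\ol{\mathcal{U}}_\ggg$ is equivalent to $\deg_{x_1^{(i)}}g \le -2$, and intersecting over all $i \in I$ yields the asserted description.

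I expect the only point needing genuine care to be the identification in the second paragraph: one must be sure the combinatorially defined $\deg_{x_1^{(i)}}$ really equals the leading exponent of the Laurent expansion, i.e.\ that no cancellation kills the leading coefficient. This is safe because $\deg_{x_1^{(i)}}g'$ is by definition the true top degree of the Laurent polynomial $g'$ in $x_1^{(i)}$ (so its leading coefficient is nonzero), and each relevant factor of $D$ contributes a clean leading term $(x_1^{(i)})^{-1}$ with unit coefficient $\pm 1$; the symmetry and vanishing constraints built into $\mathcal{U}_\ggg$ play no role in this degree count.
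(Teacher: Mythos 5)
Your proposal is correct and follows essentially the same route as the paper: the paper likewise derives the lemma directly from the reformulation in the preceding paragraph (vanishing of the coefficients of $(x_1^{(i)})^{-k-1}$ for $k\in\Z_{\le 0}$ in the Laurent expansion) and treats the translation into the condition $\deg_{x_1^{(i)}}g\le -2$ as immediate. Your second paragraph simply makes explicit the no-cancellation argument identifying the combinatorial degree with the top exponent of the expansion, which the paper leaves unstated.
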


Set $\ol{U}^-_{\bL} = U^-_{\bL}/(\fn_-[t^{-1}]U^-_{\bL})$.
Obviously $\langle \ , \ \rangle$ induces a pairing between $(\ol{U}^-_{\bL})_{-\ggg}$ and $\ol{\mathcal{U}}_\ggg$,
and moreover the following proposition holds. 

\begin{Prop}\label{Prop:dual_isom}
 The pairing induces a linear isomorphism $\ol{\mathcal{U}}_\ggg \stackrel{\sim}{\to} (\ol{U}^-_{\bL})_{-\ggg}^\vee$,
 where $(\ol{U}^-_{\bL})_{-\ggg}^\vee$ denotes the restricted dual $\bigoplus_{k \in \Z} \big((\ol{U}^-_{\bL})_{-\ggg}^k\big)^*$.
\end{Prop}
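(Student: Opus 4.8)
The plan is to deduce the isomorphism from the perfect‑pairing statement for the \emph{full} spaces, namely that $\langle\ ,\ \rangle$ identifies $\mU_\ggg$ with the full restricted dual $(U^-_{\bL})^\vee_{-\ggg}$, and then to recognize $\ol{\mU}_\ggg$ and $(\ol{U}^-_{\bL})_{-\ggg}$ as a dual pair of subspace and quotient. First I would invoke the grading compatibility recorded before Lemma \ref{Lem:description_of_barU}: since $\langle F,g\rangle=0$ unless the degrees satisfy $k+l=-\het(\ggg)$, and each $(U^-_{\bL})^k_{-\ggg}$ is finite dimensional, it suffices to show that the induced pairing between $(U^-_{\bL})^k_{-\ggg}$ and $\mU^{-\het(\ggg)-k}_\ggg$ is perfect for every $k$.

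For this I would treat the two nondegeneracies separately. Nondegeneracy in the $\mU$-slot is the softer direction: rewriting the pairing against iterated brackets of the $f_{i,k}$ by means of (\ref{eq:convenient}), one checks that the numbers $\langle F,g\rangle$ recover the coefficients of the Laurent expansion of $g$, so a function orthogonal to all of $U^-_{\bL}$ must vanish as a rational function. Nondegeneracy in the $U^-_{\bL}$-slot (equivalently, surjectivity onto the dual) is the substantive point: one has to verify that the three conditions defining $\mU_\ggg$ encode \emph{precisely} the defining relations of $U(\bL\fn_-)$ listed in the proof of Proposition \ref{Prop:bilinear_well} --- symmetry within a superscript matching $[f_{i,k},f_{i,l}]=0$, the simple poles along $x_r^{(i)}=x_s^{(j)}$ matching the mixed relations, and the $\hat{c}_{ij}$-vanishing matching the Serre relations --- so that, dually to a PBW basis of $(U^-_{\bL})_{-\ggg}$, one can produce functions in $\mU_\ggg$ realizing the corresponding functionals. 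This is where I expect the main obstacle to lie, since it amounts to the completeness of the relations in Proposition \ref{Prop:bilinear_well} and to the functional side being large enough to separate $U(\bL\fn_-)$.

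Granting the full duality, the passage to the bar version is formal and rests on the ideal identification
\[
 \sum_{i\in I,\ k\le 0} f_{i,k}\,(U^-_{\bL})_{-\ggg+\ga_i}=\big(\fn_-[t^{-1}]U^-_{\bL}\big)_{-\ggg}.
\]
The inclusion $\subseteq$ is clear because $f_{i,k}\in\fn_-[t^{-1}]$ for $k\le 0$. For $\supseteq$ I would write each generator $f_\ga\otimes t^{-m}$ (with $\ga\in R^+$, $m\ge 0$) as an iterated bracket $[f_{i_1,-m},[f_{i_2,0},\ldots,[f_{i_{h-1},0},f_{i_h,0}]\ldots]]$ of simple-root generators whose $t$-powers are all $\le 0$; expanding this bracket into ordered monomials, every term begins with some $f_{i,k}$ with $k\le 0$, so $f_\ga\otimes t^{-m}$ --- and hence $f_\ga\otimes t^{-m}\cdot u$ for any $u$ --- lies in the left-hand side. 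By the definition of $\ol{\mU}_\ggg$, this identification says exactly that $\ol{\mU}_\ggg$ is the annihilator of $(\fn_-[t^{-1}]U^-_{\bL})_{-\ggg}$ inside $\mU_\ggg\cong(U^-_{\bL})^\vee_{-\ggg}$. Since the annihilator of a graded subspace is canonically the restricted dual of the quotient, this annihilator is $(\ol{U}^-_{\bL})^\vee_{-\ggg}$, and the pairing therefore induces the asserted isomorphism $\ol{\mU}_\ggg\stackrel{\sim}{\to}(\ol{U}^-_{\bL})^\vee_{-\ggg}$.
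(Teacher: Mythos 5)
Your overall architecture is reasonable, and the formal half of your argument is sound: the identification $\sum_{i,\,k\le 0} f_{i,k}(U^-_{\bL})_{-\ggg+\ga_i}=(\fn_-[t^{-1}]U^-_{\bL})_{-\ggg}$ is correct (and is exactly what makes the induced pairing on the quotient well defined), and the injectivity via Laurent coefficients is the same argument the paper uses. But there is a genuine gap at the heart of the proposition: nondegeneracy in the $U^-_{\bL}$-slot. You write that one must check that the conditions defining $\mU_\ggg$ ``encode precisely the defining relations'' of $U(\bL\fn_-)$ so that ``one can produce functions in $\mU_\ggg$ realizing the corresponding functionals,'' and you flag this as the main obstacle --- but you never produce those functions. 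Moreover, completeness of the relations in Proposition \ref{Prop:bilinear_well} would not by itself suffice: even granting that $U^-_{\bL}$ is presented by those relations, the annihilator of the relation ideal inside the full graded dual of the tensor algebra could a priori be strictly larger than the specific space of rational functions with the prescribed pole structure that defines $\mU_\ggg$. Surjectivity genuinely requires exhibiting enough elements of $\mU_\ggg$ (in fact of $\ol{\mU}_\ggg$), and that is precisely the content the proposal omits.

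What the paper does at this point is an explicit construction: for each positive root $\ggg\in R^+$ and each $k>0$ it writes down a concrete symmetrized rational function $g_{\ggg,k}(\bm{x})\in\ol{\mU}_\ggg$ (with separate formulas outside and inside type $G_2$), and verifies via (\ref{eq:convenient}) the triangularity property (\ref{eq:compatibility}): $\langle f_{\ggg,k},g_{\ggg,k}\rangle\in\C^\times$ while $\langle f_{\ggg',k'},g_{\ggg,k}\rangle=0$ whenever $\ggg-\ggg'\notin Q^+$ or $\ggg=\ggg'$, $k\ne k'$. It then reduces $F$ to an element of $U(t\fn_-[t])$ by the Poincar\'e--Birkhoff--Witt decomposition $U^-_{\bL}=\fn_-[t^{-1}]U^-_{\bL}\oplus U(t\fn_-[t])$, picks the minimal PBW monomial $b_0=f_{\gb_N,k_N}\cdots f_{\gb_1,k_1}$ occurring in $F$, and pairs $F$ against a symmetrized product of shifted copies of the $g_{\gb_a,k_a}$; the triangularity forces $\langle b',g\rangle=0$ for all larger monomials $b'$ and $\langle b_0,g\rangle\ne 0$. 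Note also that working directly with $\ol{U}^-_{\bL}\cong U(t\fn_-[t])$ lets the paper get away with only the functions $g_{\ggg,k}$ for $k>0$ (which is where Lemma \ref{Lem:description_of_barU} guarantees membership in $\ol{\mU}_\ggg$); your plan of first proving the unbarred duality and then passing to annihilators would need dual functions for all $k\in\Z$ before the formal quotient step. Either way, without the explicit construction and the triangularity argument, the proof is incomplete.
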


This proposition is proved in Appendix \ref{subsection:pf_of_dual}.

\subsection{Proof of Proposition \ref{Prop:critical_inequality}}\label{subsection:current_complete}

Now we return to the setting of Subsection \ref{Subsection:reduction}.
Through the isomorphism $\ol{\mathcal{U}}_\ggg\stackrel{\sim}{\to} (\ol{U}_{\bL}^-)_{-\ggg}^\vee$, 
the dual space of $\Big(U^-_{\bL}\big/\big(\fn_-[t^{-1}]U^-_{\bL} + \mathcal{J} \big)\Big)_{-\ggg}$ is regarded as a subspace of 
$\ol{\mathcal{U}}_\ggg$, consisting of the functions $g(\bm{x})$ satisfying $\langle \mathcal{J}, g(\bm{x})\rangle = 0$.
Set $\hat{F}_i(z) = \sum_{k=-\infty}^\infty f_{i,k}z^{-k-1}$. 
Then for $r>0$, we have
\[ \langle \hat{F}_i(z)^r, g(\bm{x})\rangle = g(\bm{x})\big|_{x_1^{(i)} =  \cdots = x_r^{(i)}=z}.
\]
(Here the coefficients of $\hat{F}_i(z)^r$ belong to a completion of $U^-_{\bL}$, but the pairing is still well-defined.)
Let $\mathcal{V}_\ggg$ denote the subspace of $\ol{\mathcal{U}}_\ggg$ consisting of the functions 
$g(\bm{x})$ such that, for every $i \in I$ and $r>0$, the order of the pole of $g(\bm{x})\big|_{x_1^{(i)} =  \cdots 
= x_r^{(i)}=z}$ at $z=0$ is at most $\sum_{k \in S_i}\min\{r,\ell_k\}$.
Then it follows from the definition of $\mJ$ that
\begin{align}\label{eq:isom_of_V} 
 \Big(U^-_{\bL}\big/\big(\fn_-[t^{-1}]U^-_{\bL} + \mathcal{J} \big)\Big)_{-\ggg}^*\cong \mV_\ggg.
\end{align}
Moreover, an upper bound of the dimension of $\mV_\ggg$ is given as follows.

\begin{Lem}\label{Lem:fermionic_form}
 Let $\gl = \sum_k \ell_k \varpi_{i_k}$. For $\ggg \in Q^+$ such that $\gl - \ggg \in P^+$, we have
 \begin{equation}\label{eq:fermionic_form}
  \dim \mV_\ggg \leq \sum_{\{m_a^{(i)}\}}\prod_{\begin{smallmatrix}i \in I\\ a>0\end{smallmatrix}} \begin{pmatrix} 
  p_a^{(i)} + m_a^{(i)} \\ m_a^{(i)}
  \end{pmatrix},
 \end{equation}
 where
 \[ p_a^{(i)} = \sum_{k \in S_i} \min\{a,\ell_k\} + \sum_{\begin{smallmatrix}j \in I\setminus\{i\}\\ b >0\end{smallmatrix}} 
    \min\{|c_{ji}|a,|c_{ij}|b\}m_b^{(j)} -2\sum_{b>0} \min\{a,b\}m_b^{(i)},
 \]
 and the sum $\sum_{\{m_a^{(i)}\}}$ is taken over $\{m_a^{(i)}\in \Z_{\geq 0}\mid i\in I, a>0\}$ satisfying $p_a^{(i)} \geq 0$ for all $i,a$, 
 and $\ggg= \sum_{i,a} am_a^{(i)}\varpi_i$.
\end{Lem}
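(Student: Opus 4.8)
The plan is to prove the inequality (\ref{eq:fermionic_form}) by producing a spanning set of $\mV_\ggg$ indexed by \emph{cluster data} together with a choice of momenta, so that each piece of cluster data contributes exactly one binomial factor. Since only an upper bound is needed, it suffices to exhibit such a spanning set (equivalently a surjection from a space of the asserted dimension) rather than an honest basis. First I would filter $\mV_\ggg$ according to the way the variables $\{x_r^{(i)}\}$ of each fixed color $i$ collide: to $g(\bm{x})\in\mV_\ggg$ I attach, for each $i$, the set partition of $\{1,\dots,m_i\}$ read off from the leading coincidence pattern of $g$, and encode it by the multiplicities $m_a^{(i)}=\#\{\text{blocks of size }a\}$. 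This is a finite filtration indexed by $I$-tuples of partitions $\lambda^{(i)}\vdash m_i$ (where $\ggg=\sum_i m_i\ga_i$), equivalently by collections $\{m_a^{(i)}\}$ with $\sum_a a\,m_a^{(i)}=m_i$, which is precisely the index set of the sum on the right of (\ref{eq:fermionic_form}).

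On the associated graded piece attached to fixed cluster data, I would show that a function factorizes: after specializing the variables in each block to a common center-of-mass variable $z^{(i)}_{a,\nu}$ (the $\nu$-th block of size $a$ in color $i$), the leading term becomes a product over all blocks of single-variable factors. The membership $g\in\ol{\mU}_\ggg$, i.e.\ $\deg_{x_1^{(i)}}g\le -2$ from Lemma \ref{Lem:description_of_barU}, together with the defining pole condition of $\mV_\ggg$, pins the degree of each such factor in its variable to a bounded interval, and counting admissible degrees produces the three contributions to $p_a^{(i)}$. Concretely, the order of the pole of $g|_{x_1^{(i)}=\cdots=x_a^{(i)}=z}$ at $z=0$ being at most $\sum_{k\in S_i}\min\{a,\ell_k\}$ gives the first summand; the vanishing (wheel) condition imposed when a size-$a$ block of color $i$ meets a size-$b$ block of color $j\ne i$ gives the interaction term $\min\{|c_{ji}|a,|c_{ij}|b\}\,m_b^{(j)}$; and the denominator $\prod_{i<j}\prod(x_r^{(i)}-x_s^{(j)})$ combined with the degree bound $\le -2$, applied when two same-color blocks of sizes $a$ and $b$ collide, gives $-2\min\{a,b\}\,m_b^{(i)}$. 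Finally, symmetry under exchange of the $m_a^{(i)}$ equal-size blocks forces their momenta to form a partition inside an $m_a^{(i)}\times p_a^{(i)}$ rectangle; the number of such partitions is $\binom{p_a^{(i)}+m_a^{(i)}}{m_a^{(i)}}$. Taking the product over $(i,a)$ and summing over the filtration yields (\ref{eq:fermionic_form}), with configurations having some $p_a^{(i)}<0$ contributing nothing.

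The main obstacle is to make this cluster factorization rigorous and, in particular, to establish that the admissible degree interval for each block has length exactly $p_a^{(i)}$, i.e.\ that the three constraints above combine without overcounting. The delicate point is the cross-color term $\min\{|c_{ji}|a,|c_{ij}|b\}$: I would have to track how the vanishing condition $x_1^{(i)}=\cdots=x_{\hat{c}_{ij}}^{(i)}=x_1^{(j)}$ forces zeros of the numerator when an entire size-$a$ block of color $i$ approaches a size-$b$ block of color $j$, and show that the order of vanishing it imposes is governed by $\min\{|c_{ji}|a,|c_{ij}|b\}$ rather than by a cruder estimate. Handling several blocks of different sizes and colors colliding simultaneously is exactly what forces the associated-graded formalism, so that at each stage only the leading coincidence behavior must be analyzed; the residue manipulations recorded in (\ref{eq:convenient}) and the explicit description of $\ol{\mU}_\ggg$ in Lemma \ref{Lem:description_of_barU} are the principal computational inputs. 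I would carry out the block-by-block estimate by induction on the number of blocks, peeling off one block at a time and reducing $\ggg$, thereby reducing the general bound to a single-cluster estimate.
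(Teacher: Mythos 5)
Your proposal follows essentially the same route as the paper's Appendix~\ref{subsection:fermionic_form}: filter $\mV_\ggg$ by the collision pattern $\gG_{\bm{\mu}}$ of \cite{MR2290922}, specialize each block to a single variable via $\varphi_{\bm{\mu}}$, use Lemma~\ref{Lem:description_of_zero-pole} together with the degree bound of Lemma~\ref{Lem:description_of_barU} and the pole condition defining $\mV_\ggg$ to pin $\deg_{y^{(i)}_{a,u}}$ of the symmetric polynomial factor to at most $p_a^{(i)}$, and count such polynomials by partitions in an $m_a^{(i)}\times p_a^{(i)}$ rectangle. The "delicate point" you flag (the zero/pole orders $2\min\{a,b\}$ and $\min\{|c_{ji}|a,|c_{ij}|b\}$ under block collisions) is exactly the content of Lemma~\ref{Lem:description_of_zero-pole}, which the paper imports from \cite[Appendix A]{MR2290922} rather than reproving.
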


This lemma is proved by a similar argument given in \cite{MR2290922}.
For the reader's convenience, we reproduce it in Appendix \ref{subsection:fermionic_form}.

By combining several results in \cite{MR1745263,MR1993360,MR2254805,MR2428305} (see \cite[Subsections 2.3 and 2.4]{MR2428305} for the
details), it is shown that the right-hand side of 
(\ref{eq:fermionic_form}) is equal to the $U_q(\fg)$-module multiplicity 
\[ \Big[W^{i_1,\ell_1}_q(a_1) \otimes \cdots \otimes W^{i_p,\ell_p}_q(a_p): V_q(\gl-\ggg)\Big].
\]
Since this multiplicity coincides with the $\fg$-module multiplicity $\Big[W: V(\gl-\ggg)\Big]$,
Proposition \ref{Prop:critical_inequality} now follows from Lemma \ref{Lem:fermionic_form} and (\ref{eq:isom_of_V}).
This completes the proof of Theorem \ref{Thm:current}, as explained above.

\section{Proof of Proposition \ref{Prop:Uqsl2}}\label{section:proof_of_Uqsl2}


\subsection{Quantum loop algebra of type \boldmath{$A_1^{(1)}$}}\label{Subsection:quantum_sl2}

Throughout this section we focus on the case $\fg = \sl_2$ only.
In what follows, we fix a positive integer $d$, and denote by $U_q(\bL\sl_2)$ the quantum loop algebra
associated with $\hat{C} = \begin{pmatrix} 2 & -2 \\ -2 & 2 \end{pmatrix}$ and $\hat{D} = \begin{pmatrix}
d & 0 \\ 0 & d\end{pmatrix}$ (which implies $q_0=q_1=q^d$). 
Let $\tilde{q} = q^d$.

Here we collect several results concerning finite-dimensional modules over $U_q(\bL\mathfrak{sl}_2)$
(recall that we write $W^{\ell}_q(a)$ for $W^{1,\ell}_q(a)$).

\begin{Lem}\label{Lem:Uqsl2}
 {\normalfont (i)} $W^{\ell}_q(a)$ is simple as a $U_q(\sl_2)$-module and of dimension $\ell+1$.\\
 {\normalfont (ii)} The $U_q(\bL\sl_2)$-submodule of $W^{\ell_1}_q(a_1)\otimes W^{\ell_2}_q(a_2)$ generated by the tensor product of 
  $\ell$-highest weight vectors is proper and simple if
  \[ a_2/a_1 \in \{\tilde{q}^{2k}\mid \max\{\ell_1-\ell_2,0\}< k \leq \ell_1\}.
  \]
 {\normalfont (iii)} $W^{\ell_1}_q(a_1) \otimes W^{\ell_2}_q(a_2)$ is simple if 
  \[ a_2/a_1 \notin \{\tilde{q}^{2k} \mid -\ell_2 \leq k< \min\{\ell_1-\ell_2,0\}, \,\max\{\ell_1-\ell_2,0\}<k \leq \ell_1\}.
  \]
  In particular if this condition holds, then it follows from Lemma \ref{Lem:commutativity_of_tensor} (i) that 
  \[ W^{\ell_1}_q(a_1) \otimes W^{\ell_2}_q(a_2) \cong W^{\ell_2}_q(a_2) \otimes W^{\ell_1}_q(a_1).
  \]
 {\normalfont(iv)} 
  If $\ell_1+\cdots +\ell_p = \ell$, then the $U_q(\bL\sl_2)$-submodule of 
  \[ W^{\ell_1}_q(a) \otimes W^{\ell_2}_q(\tilde{q}^{2\ell_1}a) \otimes \cdots \otimes W^{\ell_p}_q\big(\tilde{q}^{2(\ell_1+\cdots
     + \ell_{p-1})}a\big)
  \]
  generated by the tensor product of $\ell$-highest weight vectors is isomorphic to $W^{\ell}_q(a)$.
\end{Lem}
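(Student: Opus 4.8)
The plan is to realize every $W^{\ell}_q(a)$ as an evaluation module and then read off assertions (ii)--(iv) from the combinatorics of the $q$-segments attached to Drinfeld polynomials. For (i) I would recall the evaluation homomorphism $\ev_a\colon U_q(\bL\sl_2)\to U_q(\sl_2)$, which restricts to the identity on $U_q(\sl_2)$. Pulling back the simple $(\ell+1)$-dimensional module $V_q(\ell)$ along $\ev_a$ produces an $\ell$-highest weight module whose $\ell$-highest weight is $\bm{\pi}_{1,\ell,a}$, hence isomorphic to $W^{\ell}_q(a)$; since restriction along $\ev_a$ recovers $V_q(\ell)$, both the dimension count and the $U_q(\sl_2)$-simplicity in (i) are immediate (cf.\ \cite{MR1300632}).

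For (ii) and (iii) the essential input is the Chari--Pressley structure theory of tensor products of evaluation modules over $U_q(\bL\sl_2)$ \cite{MR1300632}. To $W^{\ell}_q(a)$ I attach the $q$-segment $\{a,a\tilde{q}^2,\ldots,a\tilde{q}^{2(\ell-1)}\}$, the inverse roots of $\bm{\pi}_{1,\ell,a}$; writing $a_2/a_1=\tilde{q}^{2k}$, the segments of the two factors occupy the exponent ranges $[0,\ell_1-1]$ and $[k,k+\ell_2-1]$. I would first check that the complement of the set in (iii) is exactly the set of $k$ for which these two segments are in general position (neither contained in the other, and union not a single contiguous segment); there the Chari--Pressley criterion gives simplicity of $W^{\ell_1}_q(a_1)\otimes W^{\ell_2}_q(a_2)$, proving (iii), and the commutativity statement then follows from Lemma \ref{Lem:commutativity_of_tensor}(i). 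For (ii) the range $\max\{\ell_1-\ell_2,0\}<k\le\ell_1$ is one of the two reducible configurations, in which the segments link into a single contiguous segment without either containing the other (the other reducible range $-\ell_2\le k<\min\{\ell_1-\ell_2,0\}$ being its image under interchanging the tensor factors). Here I would show that $W^{\ell_1}_q(a_1)\otimes W^{\ell_2}_q(a_2)$ has simple socle isomorphic to $L_q\big(\bm{\pi}_{1,\ell_1,a_1}\bm{\pi}_{1,\ell_2,a_2}\big)$, passing to the opposite order via Lemma \ref{Lem:dual} if convenient. Since the top weight space is one-dimensional and spanned by $v_1\otimes v_2$, this vector lies in the socle, so the submodule it generates is contained in the socle; together with Lemma \ref{Lem:commutativity_of_tensor}(i), which presents $L_q\big(\bm{\pi}_{1,\ell_1,a_1}\bm{\pi}_{1,\ell_2,a_2}\big)$ as a quotient of that submodule, this forces the submodule to equal the simple socle, and it is proper because the linking makes the tensor product reducible.

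Finally, (iv) I would deduce from (i) and (ii) by induction on $p$. The product of the Drinfeld polynomials of the prescribed factors telescopes: the consecutive segments occupy the exponent ranges $[0,\ell_1-1],[\ell_1,\ell_1+\ell_2-1],\ldots$, which concatenate to $[0,\ell-1]$, so the product equals $\bm{\pi}_{1,\ell,a}$ and $L_q$ of it is $W^{\ell}_q(a)$ by (i). For the inductive step, set $L=\ell_1+\cdots+\ell_{p-1}$; by hypothesis the submodule $W'$ generated by $v_1\otimes\cdots\otimes v_{p-1}$ is isomorphic to $W^{L}_q(a)$, and since $W'$ is a submodule the coproduct confines the submodule generated by $v_1\otimes\cdots\otimes v_p$ to $W'\otimes W^{\ell_p}_q(\tilde{q}^{2L}a)\cong W^{L}_q(a)\otimes W^{\ell_p}_q(\tilde{q}^{2L}a)$. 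Here the ratio is $\tilde{q}^{2L}$, i.e.\ $k=L$ with $\max\{L-\ell_p,0\}<L\le L$, so (ii) applies and identifies this submodule with $W^{L+\ell_p}_q(a)=W^{\ell}_q(a)$.

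The main obstacle is the analysis underlying (ii) and (iii): establishing genuine simplicity, rather than only the existence of the simple quotient, and pinning down the exact reducibility ranges, including the boundary value $k=\ell_1$ where the segments are adjacent but disjoint. I expect the bookkeeping of $q$-segment linking and the identification of the socle to be the delicate points, and this is where I would rely most heavily on the Chari--Pressley analysis of the normalized $R$-matrix and the location of its poles.
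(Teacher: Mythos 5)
Your proposal is essentially the paper's own proof: the paper disposes of (i)--(iii) by citing Chari--Pressley \cite{MR1137064} for the classification of tensor products of evaluation modules of $U_q(\bL\sl_2)$, and deduces (iv) from (ii) exactly as you do; your segment bookkeeping for the reducibility ranges is correct, and your telescoping induction for (iv) is the intended argument. Two small cautions: your parenthetical gloss of ``general position'' as ``neither contained in the other, and union not a single contiguous segment'' is not the right definition (containment of one segment in the other is also general position, hence irreducible), although the criterion you actually apply afterwards is the correct one; and the one genuinely delicate point you defer to \cite{MR1137064} --- whether in the linked range the top vector generates a proper submodule rather than the whole module --- depends on the coproduct convention, which in \cite{MR1137064} differs from the one used here (the paper explicitly flags this), so the direction of the inequality $\max\{\ell_1-\ell_2,0\}<k\le\ell_1$ in (ii) must be rechecked against the present comultiplication.
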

\begin{proof}
 The proofs of (i)--(iii) are found in \cite{MR1137064} (note that the coproduct in the paper is different from ours).
 Then (iv) follows from (ii).
\end{proof}

The modules $W^1_q(a)$ are called \textit{fundamental modules}.
The following lemma is deduced from (the proof of) \cite[Lemma 4.10]{MR1137064} (see also \cite[Theorem 2.6]{MR1883181}).

\begin{Lem}\label{Lem:l_h.w}
 The tensor product of fundamental modules $W_q^1(a_1) \otimes \cdots \otimes W_q^1(a_p)$ is $\ell$-highest weight if 
 \[ a_s/a_r \neq \tilde{q}^2 \ \text{ for all } \ 1\leq r < s \leq p.
 \]

\end{Lem}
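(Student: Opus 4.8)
The defining data of an $\ell$-highest weight module separate into two parts, and the first is immediate. Writing $v=v_1\otimes\cdots\otimes v_p$ for the tensor product of the $\ell$-highest weight vectors, the vector $v$ is automatically a joint eigenvector for $U_q(\bL\fh)$ and is annihilated by every $x_{1,r}^+$; this follows from the triangularity of the comultiplication with respect to the Drinfeld generators, just as in the two-factor situation recorded in Lemma \ref{Lem:Uqsl2}. Thus the only point requiring proof is \emph{cyclicity}, namely that $U_q(\bL\sl_2)v$ is all of $W^1_q(a_1)\otimes\cdots\otimes W^1_q(a_p)$, and I would take this as the sole goal.

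For two factors this is exactly the content of Lemma \ref{Lem:Uqsl2}(ii): the submodule generated by $v_1\otimes v_2$ is proper if and only if $a_2/a_1=\tilde{q}^2$, so under the hypothesis $a_2/a_1\neq\tilde{q}^2$ the product is cyclic. Concretely this can be seen using the two lowering operators whose comultiplication is of primitive type, namely $x_{1,0}^-=X_1^-$ and, via the $\sl_2$ specialisation of (\ref{eq:sl2-isom}) together with $k_0=k_1^{-1}$, the generator $X_0^+=-o(1)k_1^{-1}x_{1,1}^-$. On each $W^1_q(a)$ both carry $v_+$ to a multiple of $v_-$ and kill $v_-$, but the scalar for $X_1^-$ is independent of $a$ while that for $X_0^+$ is proportional to $a$; iterating $\gD$ and applying these to $v_+\otimes v_+$ produces two vectors in the middle weight space whose linear independence is governed by a $2\times2$ determinant that vanishes precisely when $a_2/a_1=\tilde{q}^2$.

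To reach general $p$ I would first observe that the hypothesis supplies cyclicity of \emph{every} ordered pair $W^1_q(a_r)\otimes W^1_q(a_s)$ with $r<s$, and then propagate this to the full product by induction on $p$, transporting the remaining information through the diagonal action of $U_q(\bL\fh)$ on the weight spaces. Equivalently, and more economically, the statement is the $\sl_2$ specialisation of the general ordered-cyclicity theorem for tensor products of fundamental modules, which I would invoke from Chari--Pressley \cite{MR1137064} (the proof of Lemma 4.10) and Kashiwara \cite{MR1883181}. The main obstacle is precisely the step hidden in this propagation: the operators $x_{1,r}^\pm$ with $|r|\ge2$ act on a tensor product through a genuinely complicated comultiplication, so one cannot read off their action factor by factor, and passing from pairwise to $p$-fold cyclicity requires the $R$-matrix (or ordered-reduction) argument of \cite{MR1137064}. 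It is here that the asymmetry of the condition---$a_s/a_r\neq\tilde{q}^2$ only for $r<s$---is essential.
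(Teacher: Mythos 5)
Your proposal is correct and follows essentially the same route as the paper, which gives no independent argument for this lemma but simply deduces it from the proof of \cite[Lemma 4.10]{MR1137064} together with the ordered-cyclicity result of \cite[Theorem 2.6]{MR1883181} --- exactly the two sources you end up invoking after correctly isolating cyclicity as the only nontrivial condition. One small correction: the reference \cite{MR1883181} is V.~Chari's \emph{Braid group actions and tensor products}, not a paper of Kashiwara.
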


\subsection{Realization of fusion products}\label{Subsection:Demazure}

Write $e,f,h$ for $e_1,f_1,h_1 \in \sl_2$, and $W^{\ell}$ for the $\sl_2[t]$-module $W^{1,\ell}$.
Here we will recall a certain realization of a fusion product of $W^{\ell}$'s, and for that we prepare some notation.

Let $\wh{\sl_2} = \sl_2 \otimes \C[t,t^{-1}] \oplus \C K$ be the affine Lie algebra of type $A_1^{(1)}$ 
(without a degree operator).
Here $K$ is the canonical central element.
Note that $\sl_2$ and the current algebra $\sl_2[t]$ are naturally regarded as Lie subalgebras of $\wh{\sl_2}$.
Let $\hfh = \C h \oplus \C K$,
and $\gL_0,\gL_1 \in \hfh^*$ be the fundamental weights defined by 
\[ \langle h,\gL_0\rangle =0, \ \ \langle h,\gL_1\rangle = 1,\ \ \langle K,\gL_0\rangle = \langle K,\gL_1 \rangle= 1.
\]
Define a Borel subalgebra $\hfb \subseteq \wh{\sl_2}$ by
\[ \hfb = \hfh \oplus \C e \oplus t\sl_2[t].
\]
Let  $\hfp_1$ and $\hfp_0$ be the Lie subalgebras of $\wh{\sl_2}$ defined respectively by
\[ \hfp_1 = \C f \oplus \hfb =\sl_2[t] \oplus \C K, \ \ \ \hfp_0 = \C (e\otimes t^{-1}) \oplus \hfb,
\]
which are minimal parabolic subalgebras.
Let $\tau$ be the Lie algebra automorphism on $\wh{\sl_2}$ induced from the unique nontrivial Dynkin diagram automorphism.
Explicitly, $\tau$ is defined as follows:
\[ \tau(e \otimes t^k) = f \otimes t^{k+1}, \ \ \tau(h \otimes t^k) = -h \otimes t^k + \gd_{k0} K, \ \ \tau(K)=K, \ \ \tau^2 = \id.
\]

Given a finite-dimensional $\hfp_1$-module $D$ which is $\hfh$-semisimple,
we define a new $\hfp_1$-module $F(D)$ as follows.
Let $\tau^* D$ be the pull-back with respect to $\tau$, which is a $\hfp_0$-module since $\tau(\hfp_0) = \hfp_1$.
We consider $\tau^*D$ as a $\hfb$-module by restriction, and then 
$F(D)$ is defined by the unique maximal finite-dimensional $\hfp_1$-module quotient of the induced module 
$U(\hfp_1) \otimes_{U(\hfb)} \tau^*(D)$ (which exists by \cite[Proposition 2.1]{MR1729359}).

For $\ell \in \C$, denote by $\C_{\ell \gL_0}$ the $1$-dimensional $\hfp_1$-module on which $K$ acts as a scalar multiplication by $\ell$
and $\sl_2[t]$ acts trivially.
Now the following lemma is a reformulation of \cite[Theorem 2.5]{MR1729359}
(for the present formulation, see \cite[Theorem 6.1]{MR2964614}).


\begin{Prop}\label{Prop:Feigin_Loktev}
 Given a partition $(\ell_1\geq \cdots \geq \ell_{p-1} \geq \ell_p)$, it follows that
 \[ W^{\ell_1} * \cdots * W^{\ell_{p-1}}* W^{\ell_p} \cong F\Big(\C_{(\ell_1-\ell_2)\gL_0} \otimes 
    \cdots \otimes F\big(\C_{(\ell_{p-1}-\ell_p)\gL_0} \otimes F(\C_{\ell_p\gL_0})\big)\cdots\Big)
 \]
 as $\sl_2[t]$-modules.
\end{Prop}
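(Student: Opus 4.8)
The plan is to argue by induction on $p$, exploiting the nested shape of the right-hand side. Writing $a_p=\ell_p$ and $a_j=\ell_j-\ell_{j+1}$ for $j<p$, set $D_p=F(\C_{a_p\gL_0})$ and $D_j=F(\C_{a_j\gL_0}\otimes D_{j+1})$, so that the assertion becomes $D_1\cong W^{\ell_1}*\cdots*W^{\ell_p}$. Since the fusion product is commutative and associative up to isomorphism, the inductive step amounts to the single statement that $F(\C_{a_j\gL_0}\otimes D_{j+1})\cong W^{\ell_j}*D_{j+1}$ whenever $D_{j+1}\cong W^{\ell_{j+1}}*\cdots*W^{\ell_p}$. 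Conceptually, the iterated application of $F$ constructs, inside the level-$\ell_1$ integrable highest-weight $\wh{\sl_2}$-module, a chain of Demazure modules whose shape is recorded by the partition $(\ell_1\ge\cdots\ge\ell_p)$, with $F$ playing the role of the one-step Demazure functor (parabolic induction along $\hfp_1$ followed by passage to the maximal finite-dimensional quotient); the desired isomorphism is then exactly the theorem of Feigin and Loktev \cite{MR1729359}, in the formulation of \cite{MR2964614}.

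For the base case $j=p$ I would compute $F(\C_{\ell_p\gL_0})$ by hand. Using $\tau(h)=-h+K$, $\tau(e)=f\otimes t$ and $\tau(K)=K$, one checks that the pull-back $\tau^*\C_{\ell_p\gL_0}$ is the one-dimensional $\hfb$-module of $\hfh$-weight $\ell_p\gL_1$ on which $e$ and $t\sl_2[t]$ act by $0$. As $\hfp_1=\C f\oplus\hfb$, the Poincar\'{e}-Birkhoff-Witt theorem identifies the induced module $U(\hfp_1)\otimes_{U(\hfb)}\tau^*\C_{\ell_p\gL_0}$ with $\C[f]$ as a vector space, and its maximal finite-dimensional quotient is obtained by imposing $f^{\ell_p+1}=0$. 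A direct inspection of the action (in particular that $t\sl_2[t]$ annihilates the generator) identifies this quotient with $W^{\ell_p}$.

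The inductive step is where I expect the real difficulty to lie. The delicate point is to show that the maximal finite-dimensional quotient defining $F$ is neither too small nor too large, i.e. that $F(\C_{a_j\gL_0}\otimes D_{j+1})$ is precisely the next Demazure module in the chain, so that its restriction to $\sl_2[t]$ is $W^{\ell_j}*D_{j+1}$. I would approach this in two halves. First, to obtain a surjection $W^{\ell_j}*\cdots*W^{\ell_p}\twoheadrightarrow D_j$, I would verify that $D_j$ is generated as an $\sl_2[t]$-module by a single highest-weight vector satisfying the defining relations of the fusion product recorded in Theorem \ref{Thm:MR1988973}; the $\fn_+[t]$-nilpotency and the $\fh$-weight are immediate from the construction of $F$, while the vanishing of the relevant coefficients of $F(z)^r$ must be extracted from the parabolic structure. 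Second, I would compare graded dimensions: the left-hand side has the known graded character of a fusion product, and the right-hand side is computed either from the Demazure character or inductively from the level-raising step $\C_{a_j\gL_0}\otimes(-)$, and one checks the two agree, forcing the surjection to be an isomorphism. The crux is thus the precise identification of $F$ with the level-raising Demazure operator, which is exactly the content imported from \cite{MR1729359,MR2964614}.
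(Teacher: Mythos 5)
The paper gives no proof of this proposition at all: it is presented as a reformulation of \cite[Theorem 2.5]{MR1729359}, in the formulation of \cite[Theorem 6.1]{MR2964614}. Your argument --- a correct hand computation of the base case $F(\C_{\ell_p\gL_0})\cong W^{\ell_p}$ together with an inductive step whose crux you explicitly import from those same two references --- is therefore correct and amounts to essentially the same approach, namely citing Feigin--Loktev.
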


We need a slightly alternative realization. 
Let $\hat{V}_0$ (resp.\ $\hat{V}_1$) be the simple highest weight $\wh{\sl_2}$-module with highest weight $\gL_0$ (resp.\ $\gL_1$).
Let $m \in \Z_{\ge 0}$. If $m$ is even (resp.\ odd), let $v_m$ denote an extremal weight vector of $\hat{V}_0$ (resp.\ $\hat{V}_1$)
with weight $m \gL_1 - (m-1)\gL_0$.
Note that $\tau^*(V_0) \cong V_1$ and $\tau^*(V_1) \cong V_0$ hold,
and these isomorphisms map $\tau^*(v_m)$ to an extremal weight vector with weight $m \gL_0 - (m-1)\gL_1$,
which we denote by $v_m^{-}$.
It is easily checked that 
\begin{equation}\label{eq:extremal}
  (f\otimes t)v_m^{-} = 0, \ \ (e\otimes t^{-1})^{m}v_m^{-} \in \C^\times v_{m+1}, \ \ (e\otimes t^{-1})^{m+1}v_m^{-} = 0.
\end{equation}

For a sequence $m_1,\ldots,m_p$ of positive integers, define an $\sl_2[t]$-module $D(m_1,\ldots,m_p)$ by
\[ D(m_1,\ldots,m_p) = U(\sl_2[t])(v_{m_1} \otimes \cdots \otimes v_{m_p}) \subseteq \hat{V}_{\bar{m}_1} \otimes \cdots \otimes 
   \hat{V}_{\bar{m}_p},
\]
where $\bar{m}$ is $0$ if $m$ is even, and $1$ if $m$ is odd.

\begin{Lem}\label{Lem:another_realization}
 For a sequence $\bm{\ell} = (\ell_1,\ldots,\ell_p)$ of positive integers, set
 \[  L =\max\{\ell_1,\ldots,\ell_p\} \ \text{ and } \
     m_j = \#\,\{ 1\leq k \leq p \mid \ell_k \geq j\} \, \text{ for } \, 1 \leq j \leq L.
 \]
 Then we have 
 \[ W^{\ell_1} * \cdots * W^{\ell_p} \cong D(m_1,\ldots,m_L)
 \]
 as $\sl_2[t]$-modules.
\end{Lem}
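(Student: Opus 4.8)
The plan is to deduce the statement from the $F$-realization of the fusion product in Proposition~\ref{Prop:Feigin_Loktev}. Since the fusion product does not depend on the order of its factors, I may assume $\ell_1\ge\cdots\ge\ell_p$, so that $\bm{\ell}=(\ell_1,\ldots,\ell_p)$ is a partition; then $(m_1,\ldots,m_L)$ is precisely its conjugate partition, with $m_1=p$, $L=\ell_1$, and $m_j=\#\{k\mid \ell_k\ge j\}$ equal to the length of the $j$-th column of the Young diagram of $\bm{\ell}$. The argument will be an induction on the number of parts $p$, peeling off the largest part $\ell_1$ at each step via the recursion
\[
 W^{\ell_1}*\cdots*W^{\ell_p}\cong F\big(\C_{(\ell_1-\ell_2)\gL_0}\otimes(W^{\ell_2}*\cdots*W^{\ell_p})\big)
\]
contained in Proposition~\ref{Prop:Feigin_Loktev}, together with the inductive hypothesis $W^{\ell_2}*\cdots*W^{\ell_p}\cong D(n_1,\ldots,n_s)$, where $(n_1,\ldots,n_s)$ is the conjugate of $(\ell_2,\ldots,\ell_p)$, so that $s=\ell_2$ and $n_j=m_j-1$ for $1\le j\le \ell_2$ (here each $D$ is regarded as a $\hfp_1$-module with $K$ acting by its level $s$).

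The crux is therefore a compatibility between $F$ and the construction $D$: for $d\ge 0$ and a weakly decreasing sequence $(n_1,\ldots,n_s)$,
\[
 F\big(\C_{d\gL_0}\otimes D(n_1,\ldots,n_s)\big)\cong D(n_1+1,\ldots,n_s+1,\underbrace{1,\ldots,1}_{d}),
\]
whose right-hand side is exactly $D(m_1,\ldots,m_L)$ when $d=\ell_1-\ell_2$, since adding a new first row of length $\ell_1$ lengthens the first $\ell_2$ columns by one and creates $\ell_1-\ell_2$ new columns of height one. To prove this \emph{key lemma} I would unwind the definition of $F$: by the PBW decomposition $\hfp_1=\C f\oplus\hfb$ the induced module $U(\hfp_1)\otimes_{U(\hfb)}\tau^*(\C_{d\gL_0}\otimes D(n_1,\ldots,n_s))$ is cyclic, generated by the image of $u_d\otimes v_{n_1}^-\otimes\cdots\otimes v_{n_s}^-$, where $u_d$ generates $\C_{d\gL_0}$ and the $v_{n_j}^-$ are the $\tau$-twisted extremal vectors. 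The relations (\ref{eq:extremal})—in particular $(e\otimes t^{-1})^{n}v_n^-\in\C^\times v_{n+1}$, which raises the index by one, and $v_0^-\in\C^\times v_1$, which accounts for the appended $v_1$'s—should, after applying $\tau$, identify the maximal finite-dimensional $\hfp_1$-quotient with the $\sl_2[t]$-submodule generated by $v_{n_1+1}\otimes\cdots\otimes v_{n_s+1}\otimes v_1^{\otimes d}$.

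Concretely I would produce the isomorphism in two steps. First, using the universal property of $F$ as the maximal finite-dimensional quotient, I would construct a surjection onto $D(n_1+1,\ldots,n_s+1,1^{d})$ by checking that the generator of the latter satisfies the defining relations of the induced module (this is where (\ref{eq:extremal}) and the $\sl_2$-highest-weight property $e\,v_n=0$ of the extremal vectors enter). Second, I would match dimensions: both modules decompose identically as $\sl_2$-modules and the fusion-product side has known character, so equality of dimensions upgrades the surjection to an isomorphism. The main obstacle I anticipate is exactly this key lemma—reconciling the abstract description of $F$ (a maximal finite-dimensional quotient of an induced module) with the concrete submodule $D$ cut out by extremal weight vectors, and in particular verifying that passing to the maximal finite-dimensional quotient does not collapse $D$ further, i.e.\ that the surjection is not proper. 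As a cleaner alternative to be pursued in parallel, one can bypass the recursion and verify directly, using (\ref{eq:extremal}) and the identity $\sum_{k}\min\{r,\ell_k\}=\sum_{j=1}^{\min\{r,L\}}m_j$, that $v_{m_1}\otimes\cdots\otimes v_{m_L}$ satisfies the defining relations of Theorem~\ref{Thm:MR1988973}; this yields a surjection from the fusion product onto $D(m_1,\ldots,m_L)$, after which the same dimension count completes the proof.
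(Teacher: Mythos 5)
Your proposal is correct and follows essentially the same route as the paper: reduce to a partition, apply Proposition \ref{Prop:Feigin_Loktev}, induct on $p$ by peeling off the largest part, use (\ref{eq:extremal}) to identify $\tau^*\big(\C_{d\gL_0}\otimes D(n_1,\ldots,n_s)\big)$ with the $\hfb$-module generated by $v_1^{\otimes d}\otimes v_{n_1+1}\otimes\cdots\otimes v_{n_s+1}$, deduce a surjection from $F$ onto $D({}^t\bm{\ell})$, and conclude by comparing dimensions. The only point left implicit in your write-up is the source of the dimension equality, for which the paper invokes external results (standard monomial theory for Bott--Samelson varieties and a Demazure-module computation) rather than a direct character argument.
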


\begin{proof}
 Without loss of generality we may assume that $\bm{\ell}$ is a partition,
 and then by Proposition \ref{Prop:Feigin_Loktev} it suffices to show that
 \begin{equation}\label{eq:isomorphism}
  F\Big(\C_{(\ell_1-\ell_2)\gL_0} \otimes 
  \cdots \otimes F\big(\C_{(\ell_{p-1}-\ell_p)\gL_0} \otimes F(\C_{\ell_p\gL_0})\big)\cdots\Big) \cong D({}^t\bm{\ell}),
 \end{equation}
 where ${}^t\bm{\ell}$ is the transposed partition.
 We will show this by the induction on $p$. The case $p=1$ is easily checked.
 Assume $p >1$, and set $\bm{\ell}' = (\ell_2, \ldots,\ell_p)$.
 Since 
 \[ D({}^t\bm{\ell}') = U(\hfp_1)(v_{m_1-1} \otimes \cdots \otimes v_{m_{\ell_2}-1}),
 \]
 we have
 \begin{align*}
  \tau^*\big(\C_{(\ell_1 -\ell_2)\gL_0}\otimes D({}^t\bm{\ell}')\big) &\cong 
  \tau^*\Big(U(\hfp_1)(v_0^{\otimes (\ell_1 - \ell_2)} \otimes v_{m_1-1} \otimes \cdots \otimes v_{m_{\ell_2}-1})\Big)\\
  &\cong U(\hfp_0)(v_1^{\otimes (\ell_1-\ell_2)} \otimes 
  v_{m_1-1}^- \otimes \cdots \otimes v_{m_{\ell_2}-1}^-)\\
  &= U(\hfb)\big(v_1^{\otimes(\ell_1-\ell_2)} \otimes v_{m_1} \otimes \cdots \otimes v_{m_{\ell_2}}\big),
 \end{align*}
 where  the equality follows from (\ref{eq:extremal}).
 Hence by the definition of $F$, there exists a surjective $\hfp_1$-module homomorphism
 \[ F\big(\C_{(\ell_1 -\ell_2)\gL_0}\otimes D({}^t\bm{\ell}')\big) \twoheadrightarrow 
    U(\hfp_1)\big(v_1^{\otimes(\ell_1-\ell_2)} \otimes v_{m_1} \otimes \cdots \otimes v_{m_{\ell_2}}\big)\cong D({}^t\bm{\ell}),
 \]
 which induces a surjection 
 \[  F\Big(\C_{(\ell_1-\ell_2)\gL_0} \otimes 
     \cdots \otimes F\big(\C_{(\ell_{p-1}-\ell_p)\gL_0} \otimes F(\C_{\ell_p\gL_0})\big)\cdots\Big) \twoheadrightarrow D({}^t\bm{\ell})
 \]
 by the induction hypothesis.
 Since the dimensions of these modules coincide by \cite[Theorem 5]{MR1887117} and \cite[Corollary 6.2]{MR2964614},
 this is an isomorphism.
 Hence (\ref{eq:isomorphism}) is proved, as required.
\end{proof}

Finally we recall the following proposition.

\begin{Prop}\label{Prop:Chari_Loktev}
 Assume that $a_1,\ldots,a_p \in \mA^\times$ satisfy $a_1(1) = \cdots= a_p(1) = c \in \C^\times$, and 
 $W_q^1(a_1) \otimes \cdots \otimes W_q^1(a_p)$ is $\ell$-highest weight.
 Then we have
 \[ \ol{W_q^1(a_1) \otimes \cdots \otimes W_q^1(a_p)} \cong \varphi_c^* D(p)
 \]
 as $\sl_2[t]$-modules.
\end{Prop}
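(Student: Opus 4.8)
The statement is the $\ell_1=\cdots=\ell_p=1$ instance of Proposition \ref{Prop:Uqsl2}, and since $\varphi_{-c}^*$ inverts $\varphi_c^*$ it is equivalent to prove $\varphi_{-c}^*\ol W\cong D(p)$ as graded $\sl_2[t]$-modules, where $W=W_q^1(a_1)\otimes\cdots\otimes W_q^1(a_p)$. The plan is to realize both sides as quotients of one common cyclic module and then compare dimensions. First I would pin down the dimensions: each factor $W_q^1(a_k)$ is $2$-dimensional by Lemma \ref{Lem:Uqsl2}(i), so $\dim_{\C(q)}W=2^p$ and hence $\dim_\C\ol W=2^p$; on the other side, Lemma \ref{Lem:another_realization} applied to the constant sequence $(1,\dots,1)$ gives $D(p)\cong W^1*\cdots*W^1$, whose underlying vector space is $(W^1)^{\otimes p}$, so $\dim D(p)=2^p$ as well.

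Next I would verify that the generator $\ol v$ of $\ol W$, the image of the $\ell$-highest weight vector $v=v_1\otimes\cdots\otimes v_p$, satisfies three relations. Because $v$ is $\ell$-highest weight, $x_{1,r}^+v=0$ for all $r$, whence $\fn_+[t]\ol v=0$. The $\ell$-highest weight of $W$ is the product $\prod_k(1-a_ku)$ of the Drinfeld polynomials of the factors, so $h_{1,m}v=[m]_qd_{1,m}v$ with $md_{1,m}=\sum_k a_k^m$; letting $q\to1$ (so $[m]_q\to m$ and $a_k\to c$) gives $(h\otimes t^m)\ol v=pc^m\ol v$, that is $(h\otimes f(t))\ol v=pf(c)\ol v$, which becomes $(h\otimes f(t))\ol v=pf(0)\ol v$ after applying $\varphi_{-c}$. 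Finally $W\cong V_q(\varpi_1)^{\otimes p}$ as a $U_q(\sl_2)$-module has all weights in the interval $[-p,p]$, so $(x_{1,0}^-)^{p+1}v=0$ and hence $f^{p+1}\ol v=0$. Since $c\neq0$ the module $\ol W$ is supported at $t=c$, so it is already generated by $\ol v$ over $\sl_2[t]$.

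Let $M$ be the cyclic graded $\sl_2[t]$-module generated by $w$ subject to $\fn_+[t]w=0$, $(h\otimes f(t))w=pf(0)w$ and $f^{p+1}w=0$; this is the local Weyl module, and $\dim M\le 2^p$ by the $\sl_2$ Weyl-module bound of \cite{MR1850556}. By the previous paragraph $w\mapsto\ol v$ defines a surjection $M\twoheadrightarrow\varphi_{-c}^*\ol W$. On the other hand the generator of $D(p)$ satisfies the same three relations: the $\fn_+$- and $\fh$-relations are part of the presentation in Theorem \ref{Thm:MR1988973} (with $\gl=p\varpi_1$), while $f^{p+1}=\big(F(z)^{p+1}\big)_{-(p+1)}$ is one of its relations because $-(p+1)<-p=-\sum_k\min\{p+1,1\}$; this gives a surjection $M\twoheadrightarrow D(p)$. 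Both targets have dimension $2^p$, forcing $\dim M=2^p$ and making both surjections isomorphisms, so $\varphi_{-c}^*\ol W\cong M\cong D(p)$, which is the claim.

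The one genuinely nontrivial input is the inequality $\dim M\le 2^p$: that the single relation $f^{p+1}w=0$, on top of the triangular and Cartan relations, already collapses the module to dimension $2^p$. Everything else is bookkeeping. I would expect this to be the main obstacle, and I would dispatch it by citing the $\sl_2$ local Weyl-module dimension of \cite{MR1850556} (equivalently, the identification of $M$ with the Demazure module $D(p)$). An alternative that avoids the Weyl module would check the full system $\big(F(z)^r\big)_s\ol v=0$ for $s<-p$ of Theorem \ref{Thm:MR1988973} directly on $\ol v$, i.e.\ bound by $p$ the order of the pole at $t=c$ of the current $\sum_{k\ge0}(f\otimes t^k)$ acting on $\ol v$; this can be carried out at the quantum level from the fact that the Drinfeld polynomial of $W$ has degree $p$ and then specialized, but the computation is heavier and I would avoid it.
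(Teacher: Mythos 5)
Your argument is correct and follows essentially the same route as the paper, whose proof is simply to cite \cite[Theorem 5]{MR1850556} (classical limits of tensor products of fundamental modules are local Weyl modules) together with \cite[Corollary 1.5.1]{MR2271991} (the local Weyl module is the Demazure module $D(p)$): your intermediate module $M$ is exactly that local Weyl module, and the key input $\dim M\le 2^p$ is the same Weyl-module dimension bound the cited references supply. The only cosmetic difference is that you re-derive the identification $M\cong D(p)$ from Theorem \ref{Thm:MR1988973} and Lemma \ref{Lem:another_realization} instead of quoting it.
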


\begin{proof}
 This follows from \cite[Theorem 5]{MR1850556} and \cite[Corollary 1.5.1]{MR2271991}.
\end{proof}

\subsection{Proof of Proposition \ref{Prop:Uqsl2}}\label{subsection:proof_of_Uqsl2}

As in Proposition \ref{Prop:Uqsl2}, let $\ell_1,\ldots,\ell_p \in \Z_{>0}$ and $a_1,\ldots,a_p \in \mA^\times$ be such that 
$a_1(1) = \cdots = a_p(1) = c \in \C^\times$
and $W^{\ell_1}_q(a_1)\otimes \cdots \otimes W_q^{\ell_p}(a_p)$ is $\ell$-highest weight.

\begin{Lem}
 There exists a permutation $\gs$ on the set $\{1,\ldots,p\}$ satisfying
 \[ a_{\gs(s)}/a_{\gs(r)} \notin \tilde{q}^{2\Z_{> 0}} \ \text{ for all } \ 1\leq r < s \leq p,
 \]
 and
 \[ W^{\ell_1}_q(a_1) \otimes \cdots \otimes W^{\ell_p}_q(a_p) \cong W^{\ell_{\gs(1)}}_q(a_{\gs(1)}) \otimes \cdots \otimes W_q^{\ell_{\gs(p)}}
    (a_{\gs(p)}).
 \]
\end{Lem}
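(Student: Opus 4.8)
The plan is to produce a reordering of the tensor factors that respects the "no-forward $\tilde q^{2\mathbb Z_{>0}}$-ratio" condition, using as a tool the commutativity of tensor factors from Lemma \ref{Lem:commutativity_of_tensor} (ii), specialized to $\sl_2$ via Lemma \ref{Lem:Uqsl2} (iii). Since the whole tensor product $W^{\ell_1}_q(a_1)\otimes\cdots\otimes W^{\ell_p}_q(a_p)$ is assumed $\ell$-highest weight (hence simple, by the duality argument in the proof of Lemma \ref{Lem:commutativity_of_tensor}), any permutation of the factors yields an isomorphic module, so the second displayed isomorphism will be automatic once I have chosen $\gs$. The real content is purely combinatorial: I must find an ordering of the indices so that whenever $r<s$, the ratio $a_{\gs(s)}/a_{\gs(r)}$ is \emph{not} a positive-even power of $\tilde q$.

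The key construction is to define a relation on $\{1,\dots,p\}$ by declaring $k \prec k'$ whenever $a_{k'}/a_k \in \tilde q^{2\mathbb Z_{>0}}$, i.e. $a_{k'}=\tilde q^{2m}a_k$ for some $m>0$. First I would verify that this relation is a \emph{strict partial order}: it is irreflexive (one cannot have $a_k/a_k=\tilde q^{2m}$ with $m>0$, as $\tilde q=q^d$ is transcendental over $\C$ hence not a root of unity), and it is transitive (if $a_{k'}=\tilde q^{2m}a_k$ and $a_{k''}=\tilde q^{2m'}a_{k'}$ with $m,m'>0$ then $a_{k''}=\tilde q^{2(m+m')}a_{k}$ with $m+m'>0$). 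Antisymmetry then follows from irreflexivity plus transitivity. Given a strict partial order on a finite set, a standard fact (topological sorting) guarantees a linear extension: a bijection $\gs\colon\{1,\dots,p\}\to\{1,\dots,p\}$ such that $k\prec k'$ forces $\gs^{-1}(k)<\gs^{-1}(k')$. Reindexing, the ordering $\gs(1),\dots,\gs(p)$ lists the factors so that $a_{\gs(s)}/a_{\gs(r)}\in\tilde q^{2\mathbb Z_{>0}}$ can never happen for $r<s$ — because that would mean $\gs(r)\prec\gs(s)$, forcing $r<s$ in the wrong direction only if the extension were violated. This gives exactly the required condition $a_{\gs(s)}/a_{\gs(r)}\notin\tilde q^{2\mathbb Z_{>0}}$ for all $1\le r<s\le p$.

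For the isomorphism statement, I would argue that rearranging the factors according to $\gs$ preserves the isomorphism class. Since $W^{\ell_1}_q(a_1)\otimes\cdots\otimes W^{\ell_p}_q(a_p)$ is $\ell$-highest weight, it is the simple module $L_q(\bm\pi)$ with $\bm\pi=\prod_k \bm\pi_{1,\ell_k,a_k}$ by Lemma \ref{Lem:commutativity_of_tensor} (i) and the simplicity criterion used in its proof; any other ordering of the same set of tensor factors has the same $\ell$-highest weight $\bm\pi$ and, being a quotient of a module with simple $\ell$-highest weight constituent of the correct dimension, is isomorphic to $L_q(\bm\pi)$ as well. Concretely, one can realize the permutation as a composition of adjacent transpositions and apply Lemma \ref{Lem:Uqsl2} (iii) factor-by-factor; the hypothesis that the \emph{whole} product is simple guarantees each intermediate binary tensor product is simple, so each adjacent swap is an isomorphism by Lemma \ref{Lem:commutativity_of_tensor} (i).

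The only genuine subtlety — and the step I would treat most carefully — is the antisymmetry/acyclicity of $\prec$, which rests on $\tilde q$ not being a root of unity. Because $q$ is an indeterminate and $\tilde q=q^d$, the powers $\tilde q^{2m}$ for distinct $m$ are genuinely distinct elements of $\C(q)^\times$, so no cycle $a_{k_1}\prec a_{k_2}\prec\cdots\prec a_{k_1}$ can close up. This is what makes the partial order well-defined and hence the linear extension available; everything else is bookkeeping. I would state the transcendence observation explicitly, since it is the load-bearing hypothesis, and then invoke the existence of a linear extension of a finite poset as a standard combinatorial fact.
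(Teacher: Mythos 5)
There is a genuine gap, and it is in the step you treat as automatic rather than the one you flag as subtle. You claim the whole tensor product $W^{\ell_1}_q(a_1)\otimes\cdots\otimes W^{\ell_p}_q(a_p)$ is simple ``by the duality argument in the proof of Lemma \ref{Lem:commutativity_of_tensor}'' because it is $\ell$-highest weight, and you then conclude that any permutation of the factors gives an isomorphic module. But that duality argument requires \emph{both} the module \emph{and its dual} to be $\ell$-highest weight; here only the module itself is assumed to be, and the paper explicitly allows the tensor product to be reducible. Already for $p=2$ this fails: with $a_2/a_1=\tilde{q}^{-2}$ and $\ell_1=\ell_2=1$, the product $W^1_q(a_1)\otimes W^1_q(a_2)$ is $\ell$-highest weight but not simple, while the reverse order $W^1_q(a_2)\otimes W^1_q(a_1)$ (ratio $\tilde{q}^{2}$) is not even $\ell$-highest weight by Lemma \ref{Lem:Uqsl2}~(ii) --- so ``any permutation yields an isomorphic module'' is false in general, and your fallback claim that simplicity of the whole forces simplicity of each intermediate adjacent pair rests on the same unavailable premise. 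The actual work of the lemma is to show that each swap one performs is legitimate. The paper does this by induction on $p$: the only swap ever made is of the last two factors, and before making it one rules out the bad set of ratios in Lemma \ref{Lem:Uqsl2}~(ii) by observing that a proper cyclic submodule of $W^{\ell_{p-1}}_q(a_{p-1})\otimes W^{\ell_p}_q(a_p)$ would contradict cyclicity of the full $p$-fold product; combined with $a_p/a_{p-1}\notin\tilde{q}^{2\Z_{\leq 0}}$ this puts the pair in the simple regime of Lemma \ref{Lem:Uqsl2}~(iii), so Lemma \ref{Lem:commutativity_of_tensor}~(i) applies to that pair alone. Your global topological-sort strategy skips exactly this verification.

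Two smaller points. First, your linear extension is oriented backwards: with your convention, $k\prec k'$ (i.e.\ $a_{k'}/a_k\in\tilde{q}^{2\Z_{>0}}$) forces $k$ to appear \emph{before} $k'$, which places the pair in precisely the forbidden configuration $r<s$ with $a_{\gs(s)}/a_{\gs(r)}\in\tilde{q}^{2\Z_{>0}}$; you need a linear extension of the reverse relation. This is trivially repaired, but as written the combinatorial conclusion does not follow from the construction. Second, even with the orientation fixed, an arbitrary linear extension decomposed into arbitrary adjacent transpositions may at some intermediate stage ask you to swap a pair whose binary product you cannot certify to be simple; the inductive bookkeeping in the paper (swap only the last pair, only when forced, then recurse on $p-1$ factors) is what keeps every swap inside the reach of Lemma \ref{Lem:Uqsl2}.
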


\begin{proof}
 We show the assertion by the induction on $p$. There is nothing to prove when $p =1$.
 Assume that $p>1$.
 By the induction hypothesis, we may assume that 
 \[ a_{s}/a_{r} \notin \tilde{q}^{2\Z_{> 0}} \ \text{ for } \ 1\leq r < s \leq p-1.
 \]
 If $a_p/a_{p-1} \in \tilde{q}^{2\Z_{\leq 0}}$, then the assertion holds with $\gs = \id$, since $a_p/a_{p-1} \in \tilde{q}^{2\Z_{\leq 0}}$
 and $a_{p-1}/ a_r \notin \tilde{q}^{2\Z_{>0}}$ imply $a_p/a_r \notin \tilde{q}^{2\Z_{>0}}$.
 Assume that $a_p/a_{p-1} \notin \tilde{q}^{2\Z_{\leq 0}}$.
 If 
 \[ a_p/a_{p-1} \in \big\{\tilde{q}^{2k} \bigm| \max\{\ell_{p-1} -\ell_p,0\} < k \leq \ell_p\big\},
 \]
 then the submodule of $W^{\ell_{p-1}}_q(a_{p-1})
 \otimes W^{\ell_p}_q(a_p)$ generated by the tensor product of $\ell$-highest weight vectors is proper by Lemma \ref{Lem:Uqsl2} (ii),
 which contradicts that $W^{\ell_1}_q(a_1) \otimes \cdots \otimes W^{\ell_p}_q(a_p)$ is $\ell$-highest weight.
 Hence $W^{\ell_{p-1}}_q(a_{p-1}) \otimes W^{\ell_p}_q(a_p)$ is simple by Lemma \ref{Lem:Uqsl2} (iii), and we have
 \[ W^{\ell_1}_q(a_1) \otimes \cdots \otimes W^{\ell_{p-1}}_q(a_{p-1}) \otimes W^{\ell_p}_q(a_p) \cong W^{\ell_1}_q(a_1) \otimes \cdots
    \otimes W^{\ell_p}_q(a_p) \otimes W^{\ell_{p-1}}_q(a_{p-1}).
 \]
 Now by applying the induction hypothesis to $W_q^{\ell_1}(a_1) \otimes \cdots \otimes W_q^{\ell_{p-2}}(a_{p-2}) \otimes W_q^{\ell_p}(a_p)$,
 we obtain the required result.
\end{proof}

By this lemma, we may (and do) assume that 
\begin{equation}\label{eq:assumption}
 a_s/a_r \notin \tilde{q}^{2\Z_{>0}} \ \text{ for all } \ 1 \leq r<s \leq p.
\end{equation}
Put $L = \max\{\ell_k \mid 1\leq k \leq p\}$, and
\[ M_j = \{1\leq k \leq p \mid \ell_k \geq j\} \ \text{ for } \ 1\leq j \leq L.
\]

\begin{Lem}\label{Lem:injection}
 There exists an injective $U_q(\bL\sl_2)$-module homomorphism 
 \[ \iota\colon W^{\ell_1}_q(a_1)\otimes \cdots \otimes W^{\ell_p}_q(a_p) 
    \hookrightarrow \bigotimes_{k \in M_1}W_q^1(a_k) \otimes \bigotimes_{k \in M_2}
    W_q^1(\tilde{q}^2a_k) \otimes \cdots \otimes \bigotimes_{k \in M_{L}} W_q^1(\tilde{q}^{2L-2}a_k)
 \]
 mapping an $\ell$-highest weight vector to a tensor product of $\ell$-highest weight vectors.
 Here each $\bigotimes_k W_q^1(\tilde{q}^{2j}a_k)$ are ordered so that $W_q^1(\tilde{q}^{2j}a_r)$ is left to $W_q^1(\tilde{q}^{2j}a_s)$
 if $r<s$.
\end{Lem}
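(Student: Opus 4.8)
The plan is to construct the injection $\iota$ by iterating the fundamental-module decomposition of KR modules provided by Lemma \ref{Lem:Uqsl2} (iv). First I would observe that each single factor $W^{\ell_k}_q(a_k)$ can be embedded into a tensor product of $\ell_k$ fundamental modules: by Lemma \ref{Lem:Uqsl2} (iv), the submodule of $W^1_q(a_k) \otimes W^1_q(\tilde{q}^2 a_k) \otimes \cdots \otimes W^1_q(\tilde{q}^{2\ell_k-2}a_k)$ generated by the tensor product of $\ell$-highest weight vectors is isomorphic to $W^{\ell_k}_q(a_k)$. Dualizing this statement (using Lemma \ref{Lem:dual} and the commutation properties of duals) should turn the \emph{quotient}-type statement of (iv) into an \emph{injection} of $W^{\ell_k}_q(a_k)$ into the reversed tensor product of fundamentals, sending the $\ell$-highest weight vector to the tensor product of $\ell$-highest weight vectors; alternatively one checks directly that the required map exists because the tensor product of fundamentals is $\ell$-highest weight under the spectral condition, so its $\ell$-highest weight submodule surjects onto the simple KR module and, comparing $\ell$-highest weights, the KR module embeds.

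Next I would tensor these individual embeddings together. Each $W^{\ell_k}_q(a_k)$ contributes fundamental factors with spectral parameters $a_k, \tilde{q}^2 a_k, \ldots, \tilde{q}^{2\ell_k - 2}a_k$, i.e.\ one factor $W^1_q(\tilde{q}^{2j-2}a_k)$ for each $j$ with $1 \le j \le \ell_k$, equivalently for each $j$ with $k \in M_j$. Collecting all factors across $k$ and grouping them by the index $j$ reproduces exactly the target $\bigotimes_{k \in M_1} W^1_q(a_k) \otimes \bigotimes_{k \in M_2} W^1_q(\tilde{q}^2 a_k) \otimes \cdots \otimes \bigotimes_{k \in M_L} W^1_q(\tilde{q}^{2L-2}a_k)$, once the factors within each $M_j$-block are ordered by increasing $k$ as prescribed. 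The tensor product of the individual injections is an injection $\bigotimes_k W^{\ell_k}_q(a_k) \hookrightarrow \bigotimes_k \big(W^1_q(a_k)\otimes\cdots\otimes W^1_q(\tilde{q}^{2\ell_k-2}a_k)\big)$, and reordering the factors on the right into the claimed grouped form requires commuting fundamental modules past one another.

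The main obstacle, and the step I would spend the most care on, is justifying that the reordering of fundamental factors into the grouped target is an isomorphism of $U_q(\bL\sl_2)$-modules preserving $\ell$-highest weight vectors. This is where the hypothesis (\ref{eq:assumption}) that $a_s/a_r \notin \tilde{q}^{2\Z_{>0}}$ for $r<s$ is essential: I would use Lemma \ref{Lem:Uqsl2} (iii) and Lemma \ref{Lem:commutativity_of_tensor} (i) to swap two fundamental modules $W^1_q(b)\otimes W^1_q(b') \cong W^1_q(b')\otimes W^1_q(b)$ whenever $b'/b \notin \tilde{q}^{\pm 2}$, i.e.\ whenever the pair is not in the single dangerous spectral position. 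Concretely, the full tensor product of all fundamentals is $\ell$-highest weight by Lemma \ref{Lem:l_h.w} together with the spectral separation, so the only obstruction to commuting two factors would be a ratio lying in $\tilde{q}^{2\Z}$; I would check that moving each factor from its position in the ungrouped product to its slot in the grouped product only ever transposes factors whose spectral ratio avoids $\tilde{q}^{\pm 2}$, using the specific values $\tilde{q}^{2j-2}a_k$ and the constraint (\ref{eq:assumption}). Since every intermediate transposition is an isomorphism fixing the $\ell$-highest weight vector, the composite reordering is an isomorphism, and composing it with the tensored embedding yields the desired $\iota$.
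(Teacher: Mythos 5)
Your first two steps are fine (note that Lemma \ref{Lem:Uqsl2}~(iv) already produces a \emph{submodule}, hence an injection $W_q^{\ell_k}(a_k)\hookrightarrow W_q^1(a_k)\otimes\cdots\otimes W_q^1(\tilde{q}^{2\ell_k-2}a_k)$ directly, with no dualization needed), but the third step --- regrouping the fundamental factors by a chain of adjacent transpositions, each an isomorphism --- has a genuine gap: the verification you propose to carry out is false. The pairs whose relative order must be reversed are exactly the $\big(W_q^1(\tilde{q}^{2j-2}a_k),\,W_q^1(\tilde{q}^{2j'-2}a_{k'})\big)$ with $k<k'$ and $j'<j$, and their spectral ratio is $\tilde{q}^{2(j'-j)}a_{k'}/a_k$. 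Assumption (\ref{eq:assumption}) rules out $a_{k'}/a_k\in\tilde{q}^{2\Z_{>0}}$, but it permits $a_{k'}=a_k$, in which case the pair with $j'=j-1$ has ratio exactly $\tilde{q}^{-2}$ and the transposition is \emph{not} an isomorphism. Concretely, take $p=2$, $\ell_1=2$, $\ell_2=1$, $a_1=a_2=a$ (allowed by (\ref{eq:assumption}), and the typical situation in Theorem \ref{Thm:Main}~(i)): your intermediate module is $W_q^1(a)\otimes W_q^1(\tilde{q}^2a)\otimes W_q^1(a)$ while the target is $W_q^1(a)\otimes W_q^1(a)\otimes W_q^1(\tilde{q}^2a)$, and the required swap of the last two factors is forbidden --- by Lemma \ref{Lem:Uqsl2}~(ii) the module $W_q^1(a)\otimes W_q^1(\tilde{q}^2a)$ has a proper submodule generated by the highest weight line while $W_q^1(\tilde{q}^2a)\otimes W_q^1(a)$ is $\ell$-highest weight, so the two orderings are non-isomorphic indecomposables. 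Coincident (or $\tilde{q}^{2\Z_{\le 0}}$-related) parameters are precisely the interesting case, so this is not a removable corner case.

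This is exactly the difficulty the paper's proof is designed to avoid. Instead of fully decomposing every $W_q^{\ell_k}(a_k)$ into fundamentals and then reordering, the paper inducts on $p$ and peels fundamental factors off $W_q^{\ell_1}(a_1)$ one at a time via the injections (\ref{eq:injections}): at stage $j$ the new factor $W_q^1(\tilde{q}^{2j-2}a_1)$ is inserted into the $j$-th block at the moment that block is assembled (the factors with $a_k=a_1$ inside that block are handled by the two-factor case of Lemma \ref{Lem:Uqsl2}~(iv), not by commuting $W_q^1(b)$ past $W_q^1(\tilde{q}^2b)$), and only the remaining tail $W_q^{\ell_1-j}(\tilde{q}^{2j}a_1)$ is commuted past the factors with $a_k\neq a_1$, for which the spectral ratios genuinely avoid the resonant set. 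To repair your argument you would need to replace the na\"ive reordering with some such interleaved construction; as written, the reordering step fails.
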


\begin{proof}
 We show the assertion by the induction on $p$. If $p=1$, it follows from Lemma \ref{Lem:Uqsl2} (iv).
 Assume that $p>1$.
 We claim that, for each $1\leq j \leq \ell_1-1$, there exists an injective homomorphism
 \begin{equation}\label{eq:injections}
   W_q^{\ell_1+1-j}(\tilde{q}^{2j-2}a_1) \otimes \bigotimes_{k \in M_j\setminus\{1\}} W_q^1(\tilde{q}^{2j-2}a_k)
   \hookrightarrow \bigotimes_{k \in M_j} W_q^1(\tilde{q}^{2j-2}a_k) \otimes W_q^{\ell_1-j} (\tilde{q}^{2j}a_1).
 \end{equation}
 Indeed, putting $T= \{ k \in M_j \mid a_k = a_1\}$, it follows from (\ref{eq:assumption}) and Lemma \ref{Lem:Uqsl2} that
 \begin{align*} 
  W_q^{\ell_1+1-j}(&\tilde{q}^{2j-2}a_1) \otimes \bigotimes_{k \in M_j\setminus\{1\}} W_q^1(\tilde{q}^{2j-2}a_k)\\ 
  &\stackrel{\sim}{\to}
  W_q^1(\tilde{q}^{2j-2}a_1)^{\otimes(\# T-1)} \otimes W_q^{\ell_1+1-j}(\tilde{q}^{2j-2}a_1) \otimes \bigotimes_{k \in M_j\setminus T}
  W_q^1(\tilde{q}^{2j-2}a_k)\\
  &\hookrightarrow W_q^1(\tilde{q}^{2j-2}a_1)^{\otimes \# T} \otimes W_q^{\ell_1-j}(\tilde{q}^{2j}a_1) \otimes 
  \bigotimes_{k\in M_j\setminus T} W_q^1(\tilde{q}^{2j-2}a_k)\\
  &\stackrel{\sim}{\to} \bigotimes_{k \in M_j} W_q^1(\tilde{q}^{2j-2}a_k) \otimes W_q^{\ell_1-j} (\tilde{q}^{2j}a_1),
 \end{align*}
 and hence the claim is proved.

 By composing the homomorphisms induced from (\ref{eq:injections}), we obtain an injective homomorphism
 \begin{align*}
  W_q^{\ell_1}(a_1) \otimes \bigotimes_{k \in M_1\setminus\{1\}}W_q^1(a_k) \otimes &\cdots \otimes \bigotimes_{k \in M_{L}
  \setminus\{1\}} W_q^1(\tilde{q}^{2L-2}a_k) \\ &\hookrightarrow
  \bigotimes_{k \in M_1}W_q^1(a_k) \otimes \cdots \otimes \bigotimes_{k \in M_{L}} W_q^1(\tilde{q}^{2L-2}
  a_k).
 \end{align*}
 Since there is an injective homomorphism from $W^{\ell_1}_q(a_1)\otimes \cdots \otimes W^{\ell_p}_q(a_p)$ to the
 left-hand side by the induction hypothesis, the assertion is proved.
\end{proof}

Note that, by Lemma \ref{Lem:l_h.w} and (\ref{eq:assumption}), $\bigotimes_{k\in M_j} W_q^1(\tilde{q}^{2j-2}a_k)$ are $\ell$-highest weight
for all $j$.
Let $v \in W^{\ell_1}_q(a_1)\otimes \cdots \otimes W^{\ell_p}_q(a_p)$ and $v_j \in \bigotimes_{k \in M_j}W_q(\tilde{q}^{2j-2}a_k)$ 
be $\ell$-highest weight vectors satisfying $\iota(v) = v_1\otimes \cdots \otimes v_{\ell_{L}}$.
By Lemma \ref{Lem:A-form} we have
\[ \iota(U_{\mA}(\bL\sl_2)v) \subseteq U_{\mA}(\bL\sl_2)v_1 \otimes \cdots \otimes U_{\mA}(\bL\sl_2)v_{\ell_{L}},
\]
and hence $\iota$ induces an $\bL\sl_2$-module homomorphism 
\[ \ol{\iota}\colon \ol{W_q^{\ell_1}(a_1)\otimes \cdots \otimes W_q^{\ell_p}(a_p)} \to \ol{\bigotimes_{k \in M_1}W_q^1(a_k)} \otimes 
   \cdots \otimes \ol{\bigotimes_{k \in M_{\ell_{L}}}W_q^1(\tilde{q}^{2\ell_{L}-2}a_k)}.
\]
Set $m_j = \#\,M_j$ ($1\le j \le L$). 
By Proposition \ref{Prop:Chari_Loktev}, the right-hand side is isomorphic to
\[ \varphi_c^*D(m_1) \otimes \cdots \otimes \varphi_c^* D(m_L) =
   \varphi_c^*\big(D(m_1) \otimes \cdots \otimes D(m_L)\big),
\]
and the image of the composition of this isomorphism with $\ol{\iota}$ is $\varphi_c^*D(m_1, \ldots, m_L)$,
which is isomorphic to $\varphi_c^*(W^{\ell_1} * \cdots * W^{\ell_p})$ by Lemma \ref{Lem:another_realization}.
Hence we obtain a surjective homomorphism 
\[ \ol{W_q^{\ell_1}(a_1)\otimes \cdots \otimes W_q^{\ell_p}(a_p)} \twoheadrightarrow \varphi_c^*(W^{\ell_1} * \cdots * W^{\ell_p}),
\]
and it is easy to see that the dimensions of the two modules are equal. 
Hence Proposition \ref{Prop:Uqsl2} is proved.

\appendix
\section{}\label{Appendix}

In this appendix, we will give proofs of the results mentioned in Section \ref{Section:Proof1}.
We use freely the notations introduced in the section.
For $\ggg = \sum_i m_i \ga_i \in Q^+$ we write
\[ \gD_\ggg = \prod_{i<j} \prod_{\begin{smallmatrix} 1\leq r\leq m_i\\ 1\leq s \leq m_j
   \end{smallmatrix}}(x_r^{(i)}-x_s^{(j)})
\]
for ease of notation.

\subsection{Filtration on {\boldmath$ \mU_\ggg$}}

Let $\ggg = \sum_i m_i \ga_i \in Q^+$.
Following \cite{MR2290922}, we will define a filtration on $\mU_\ggg$, which plays an important roll in the next subsections.

Let $\bm{\mu} = (\mu^{(i)})_{i \in I}$ be an $I$-tuple of partitions satisfying $|\mu^{(i)}| = m_i$,
and denote by $m_a^{(i)}$ the number of rows of length $a$ in $\mu^{(i)}$
(here, as usual, we identify partitions with Young diagrams).
Let $\bm{y}_{\bm{\mu}}$ be the set of variables indexed by the rows of $\mu^{(i)}$'s:
\[ \bm{y}_{\bm{\mu}}=\{ y_{a,u}^{(i)} \mid i \in I, a>0, 1\leq u \leq m_a^{(i)}\}.
\]
We will define a specialization map $\varphi_{\bm{\mu}}$ from $\mU_\ggg$ to the space of rational functions $\C(\bm{y}_{\bm{\mu}})$ 
in $\bm{y}_{\bm{\mu}}$. 
For each $i \in I$, reindex (arbitrarily) the variables $\{x_r^{(i)} \mid 1\leq r \leq m_i\}$ into 
\[ \{x_{a,u,v}^{(i)}\mid a>0, 1\leq u \leq m_a^{(i)}, 1\leq v \leq a\},
\]
which are parametrized by the boxes of $\mu^{(i)}$.
Then let $\varphi_{\bm{\mu}}\colon \mU_{\ggg} \to \C(\bm{y}_{\bm{\mu}})$ be the linear homomorphism naturally defined from the map
\[ \bm{x}_\ggg \to \bm{y}_{\bm{\mu}} \colon x_{a,u,v}^{(i)} \mapsto y_{a,u}^{(i)},
\]
which does not depend on the reindexing due to the symmetry.


Define a lexicographic ordering $\leq$ on the set of $I$-tuples of partitions by $\bm{\mu} < \bm{\nu}$ if and only if
there exists $i \in I$ such that $\mu^{(j)} = \nu^{(j)}$ for $j < i$ and $\mu^{(i)} < \nu^{(i)}$.
Here the ordering on partitions are the usual lexicographic one.
Let
\[ \gG_{\bm{\mu}} = \bigcap_{\bm{\nu} > \bm{\mu}} \mathrm{ker}\, \varphi_{\bm{\nu}}\subseteq \mathcal{U}_\ggg, 
\]
which defines a filtration $\mU_\ggg = \bigcup_{\bm{\mu}} \gG_{\bm{\mu}}$.
We also define $\gG_{\bm{\mu}}' = \bigcap_{\bm{\nu} \geq \bm{\mu}} \mathrm{ker} \,\varphi_{\bm{\nu}} \subseteq \mU_{\ggg}$.
The zeros and poles of the functions in the image $\varphi_{\bm{\mu}} (\gG_{\bm{\mu}}) \cong \gG_{\bm{\mu}} /\gG_{\bm{\mu}}'$ 
are described by the 
following lemma.
For the proof, see \cite[Appendix A]{MR2290922}.

\begin{Lem}\label{Lem:description_of_zero-pole}
 Assume that $g(\bm{x}) \in \gG_{\bm{\mu}}$. \\
 {\normalfont(i)} 
  The function $\varphi_{\bm{\mu}}(g(\bm{x}))$ has a zero of order at least $2\min\{a,b\}$ whenever $y_{a,u}^{(i)} = y_{b,v}^{(i)}$.\\
 {\normalfont(ii)}
  For $i, j \in I$ with $i \neq j$,
  $\varphi_{\bm{\mu}}(g(\bm{x}))$ has a pole of order at most $\mathrm{min}\{|c_{ji}|a, |c_{ij}|b\}$
  whenever $y_{a,u}^{(i)} = y_{b,v}^{(j)}$.
\end{Lem}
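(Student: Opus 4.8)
The plan is to analyze the specialized function $\varphi_{\bm{\mu}}(g)$ head-on, writing $g = g'/\gD_\ggg$ and using that $\varphi_{\bm{\mu}}$ is a multiplicative specialization, so that $\varphi_{\bm{\mu}}(g) = \varphi_{\bm{\mu}}(g')/\varphi_{\bm{\mu}}(\gD_\ggg)$. The zero and pole orders at a coincidence of two $\bm{y}$-variables are then read off from two inputs: (a) the contribution of the denominator $\gD_\ggg$, which is completely explicit, and (b) a bound on the order of vanishing of the numerator $g'$. The latter is forced by the two structural properties of functions in $\mU_\ggg$ — symmetry in same-superscript variables, and the defining vanishing when $\hat{c}_{ij}$ color-$i$ variables meet a color-$j$ variable — together with the filtration hypothesis $g \in \gG_{\bm{\mu}}$, i.e. $\varphi_{\bm{\nu}}(g)=0$ for every $\bm{\nu} > \bm{\mu}$.

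For part (i), I would first note that $\gD_\ggg$ contains only inter-color factors $x_r^{(i)}-x_s^{(j)}$ with $i \ne j$, so it neither vanishes nor blows up when the two same-color variables $y_{a,u}^{(i)}$ and $y_{b,v}^{(i)}$ collide; hence the zero comes entirely from $\varphi_{\bm{\mu}}(g')$, and $g'$ (equivalently $g$) is symmetric in the $a+b$ color-$i$ variables of the two rows. The key input is the filtration: for each $I$-tuple $\bm{\nu}$ obtained from $\bm{\mu}$ by transferring $k \ge 1$ boxes from the shorter row to the longer row in color $i$ (so the two row-lengths become $a-k$ and $b+k$), one has $\bm{\nu} > \bm{\mu}$ and thus $\varphi_{\bm{\nu}}(g)=0$. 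Through the symmetry of $g$, these identities say precisely that $g$, viewed as a symmetric function of the $a+b$ variables with all others frozen, vanishes on the partial diagonals obtained by collapsing those variables onto two common values in the patterns $(a-k,b+k)$. I would then upgrade this diagonal vanishing to the sharp order $2\min\{a,b\}$ at $y_{a,u}^{(i)}=y_{b,v}^{(i)}$ by a Taylor expansion along the diagonal, organized as an induction on $\min\{a,b\}$; when $a=b$ the extra symmetry exchanging the two equal-length rows shows the expansion in $(y_{a,u}^{(i)}-y_{b,v}^{(i)})$ is even, which helps pin the order down.

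For part (ii), I would observe that applying $\varphi_{\bm{\mu}}$ makes the denominator acquire exactly the factor $(y_{a,u}^{(i)}-y_{b,v}^{(j)})^{ab}$, from the $a\cdot b$ inter-color factors between the two rows, so the asserted pole bound is equivalent to the statement that $\varphi_{\bm{\mu}}(g')$ vanishes at $y_{a,u}^{(i)}=y_{b,v}^{(j)}$ to order at least $ab-\min\{|c_{ji}|a,|c_{ij}|b\}$. This lower bound I would extract from the defining vanishing of $g'$ — it vanishes whenever $1+|c_{ij}|$ color-$i$ variables coincide with one color-$j$ variable, and, applying the same condition with $i$ and $j$ interchanged, whenever $1+|c_{ji}|$ color-$j$ variables coincide with one color-$i$ variable — applied repeatedly as the color-$i$ $a$-row and the color-$j$ $b$-row are driven together, using the symmetry in each color to deploy the condition across all clusters. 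Counting the resulting order then yields the bound $\min\{|c_{ji}|a,|c_{ij}|b\}$ on the pole.

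The main obstacle in both parts is this final order count, which is a genuinely delicate piece of symmetric-function combinatorics rather than a formal manipulation: in (i), converting the partial-diagonal vanishing coming from the filtration into the exact even order $2\min\{a,b\}$, and in (ii), showing that the single defining condition forces the numerator to vanish to order as high as $ab-\min\{|c_{ji}|a,|c_{ij}|b\}$ when two full rows of different colors collide. I would model this bookkeeping on the argument of Ardonne and Kedem \cite{MR2290922}, whose filtration $\gG_{\bm{\mu}}$ and specialization maps $\varphi_{\bm{\mu}}$ we have adopted, carrying out their clustering estimates in the present generality.
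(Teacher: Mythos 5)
Your proposal is correct and follows essentially the same route as the paper, which in fact offers no proof of this lemma beyond the pointer to \cite[Appendix A]{MR2290922}. Your preliminary reductions --- the observation that $\gD_\ggg$ contributes nothing at a same-colour collision and exactly $(y_{a,u}^{(i)}-y_{b,v}^{(j)})^{ab}$ at a mixed one, the identification of the tuples $\bm{\nu}>\bm{\mu}$ obtained by redistributing boxes between the two colour-$i$ rows, and the translation of the pole bound into a vanishing bound of order $ab-\min\{|c_{ji}|a,|c_{ij}|b\}$ for the specialized numerator --- are all sound, and the clustering order-counts you defer to Ardonne--Kedem are precisely the content of the cited appendix.
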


By this lemma, we see that $\varphi_{\bm{\mu}}(g(\bm{x}))$ for $g(\bm{x}) \in \gG_{\bm{\mu}}$ is of the form
\begin{equation}\label{eq:description_of_im}
 \frac{\prod_{i\in I}\prod_{(a,u)<(b,v)} (y_{a,u}^{(i)}-y_{b,v}^{(i)})^{2\min\{a,b\}}}
 {\prod_{i<j}\prod_{(a,u),(b,v)} (y_{a,u}^{(i)}-y_{b,v}^{(j)})^{\min\{|c_{ji}|a,|c_{ij}|b\}}}\cdot h_0(\bm{y}_{\bm{\mu}}),
\end{equation}
where $h_0(\bm{y}_{\bm{\mu}})$ is a Laurent polynomial in $\bm{y}_{\bm{\mu}}$ and 
symmetric under the exchange of variables $y_{a,u}^{(i)} \leftrightarrow y_{a,v}^{(i)}$.

\subsection{Simplicity of poles}\label{subsection:simplicity}

The purpose of this subsection is to prove the following lemma.

\begin{Lem}\label{Lem:simplicity_of_poles}
 Let $\ggg \in Q^+$, and $i_1,i_2,\ldots$ be a sequence of elements of $I$.
 Define a sequence of variables $x_1,x_2,\ldots$ by $x_r = x^{(i_r)}_{\#\{s \leq r\mid\, i_s=i_r\}}$.
 Take $g_1(\bm{x}) \in \mU_\ggg$ arbitrarily, and define functions $g_r(\bm{x})$ $(r=1,2,\ldots)$ inductively by
 \[ g_r(\bm{x}) = \Res_{x_{r-1}=x_r}\, g_{r-1}(\bm{x}).
 \]
 Then each $g_r(\bm{x})$ has at most a simple pole at $x_r=x_{r+1}$.
\end{Lem}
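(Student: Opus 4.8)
I would argue by induction on $r$, keeping track of the order of \emph{every} pole of $g_r$ between the surviving variables $x_r,x_{r+1},\ldots$, not merely the one at $x_r=x_{r+1}$. The input about $g_1\in\mU_\ggg$ that I would record is threefold. Since the denominator $\gD_\ggg$ carries each factor $x_a-x_b$ only to the first power, $g_1$ has at most simple poles between distinct variables; such a pole is actually present only when the nodes $i_a,i_b$ are adjacent, because for non-adjacent $i\neq j$ one has $\hat{c}_{ij}=1$ and the defining vanishing of the numerator at $x_1^{(i)}=x_1^{(j)}$ cancels the corresponding factor of $\gD_\ggg$; and for adjacent nodes the numerator still vanishes on the multi-diagonals $x_1^{(i)}=\cdots=x_{\hat{c}_{ij}}^{(i)}=x_1^{(j)}$ prescribed in the definition of $\mU_\ggg$ (with the orders made explicit, on the associated graded of the filtration, in Lemma \ref{Lem:description_of_zero-pole} and (\ref{eq:description_of_im})). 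These are the data I would propagate.

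The one-step analysis is the following. The residue $g_r=\Res_{x_{r-1}=x_r}g_{r-1}$ is nonzero only if $g_{r-1}$ has a pole along $x_{r-1}=x_r$ (simple, by the inductive hypothesis), in which case $g_r$ is obtained by multiplying by $x_{r-1}-x_r$ and restricting to $x_{r-1}=x_r$. Poles of $g_r$ along $x_a=x_b$ with $a,b>r$ are inherited from $g_{r-1}$ and remain simple, so the only place where the order can a priori grow is along $x_r=x_b$: there the residue superposes the pole already present along $x_r=x_b$ with the pole along $x_{r-1}=x_b$, which the substitution $x_{r-1}\mapsto x_r$ transfers onto $x_r=x_b$. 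Everything thus reduces to showing that the coefficient of $(x_r-x_b)^{-2}$ vanishes; unwinding the nested residue, this coefficient is the value, on the total collision diagonal $x_1=\cdots=x_r=x_b$ of the original variables, of the numerator left after the consecutive simple poles $x_1=x_2,\ldots,x_{r-1}=x_r$ have been cleared—exactly the computation packaged in (\ref{eq:convenient}).

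I would dispose of this by tracing the two offending poles back to the adjacencies that created them, obtaining a closed walk in the Dynkin diagram joining the nodes $i_1,\ldots,i_r,i_b$. If the walk does not backtrack it is a genuine cycle, which is impossible because the Dynkin diagram of a finite-dimensional simple Lie algebra is a tree; hence one of the two poles must be absent and no double pole occurs. If instead the walk backtracks, then some node $i$ occurs at least twice among $i_1,\ldots,i_r,i_b$, so the collision diagonal places coincident node-$i$ variables alongside a variable of an adjacent node $j$, i.e. it lies on one of the multi-diagonals on which the numerator vanishes by the defining conditions of $\mU_\ggg$; the coefficient of $(x_r-x_b)^{-2}$ is then zero. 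In either case $g_r$ has at most a simple pole along $x_r=x_{r+1}$, which closes the induction and proves the lemma.

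The hard part will be the backtracking regime in non-simply-laced types: a single repeated node supplies only two coincident variables, whereas the vanishing of $\mU_\ggg$ is triggered by $\hat{c}_{ij}=\max\{1-c_{ij},1\}$ coincident node-$i$ variables, so for a multiple bond one must show either that the backtracking forces enough coincidences, or that the needed first-order vanishing can be extracted from the finer orders in (\ref{eq:description_of_im})—zeros of order $2\min\{a,b\}$ across equal nodes set against poles of order at most $\min\{|c_{ji}|a,|c_{ij}|b\}$ across adjacent nodes. Making this count uniform in the type, and verifying that the numerator-vanishing data is genuinely preserved by each individual residue (so that it is still available at step $r$), is the technical heart of the argument; the tree property disposes of everything else essentially for free.
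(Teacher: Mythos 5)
Your setup is right as far as it goes: you correctly observe that the residue at $x_{r-1}=x_r$ can only increase the pole order along $x_r=x_b$, by superposing the pole of $g_{r-1}$ at $x_{r-1}=x_b$ onto the one at $x_r=x_b$, so everything reduces to showing that the coefficient of $(x_r-x_b)^{-2}$ vanishes. But the mechanism you offer for that vanishing does not close, and you concede as much. First, the ``closed walk'' dichotomy is not justified: the sequence $i_1,\ldots,i_r,i_b$ need not be a walk in the Dynkin diagram, and the pole of $g_{r-1}$ at $x_{r-1}=x_b$ is an accumulation of contributions transferred from every $x_a=x_b$ with $a\le r-1$ and $i_a$ adjacent to $i_b$ (after the residues, poles even migrate onto pairs of variables of the \emph{same} colour), so the tree property alone rules out nothing and the whole burden falls on numerator vanishing. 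Second, in the regime where all the content lies, a repeated node supplies only two coincident variables of one colour, whereas the defining vanishing of $\mU_\ggg$ is triggered by $\hat{c}_{ij}=\max\{1-c_{ij},1\}$ of them --- up to four in type $G_2$ --- so the vanishing you invoke is simply not available from a single backtrack in non-simply-laced types. Third, the numerator of $g_{r-1}$ is not the original numerator $g_1'$, and the claim that the vanishing conditions survive the earlier residues is asserted, not proved; appealing to (\ref{eq:description_of_im}) cannot repair this, since that formula describes only the associated graded of the filtration $\gG_{\bm{\mu}}$, not the coefficient of a specific power of $(x_r-x_b)$ in $g_r$ itself. What you call ``the technical heart of the argument'' is in fact the entire lemma, and it is left open.

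The paper avoids this local bookkeeping altogether. It reduces to $\ggg=\gb=\ga_{i_1}+\cdots+\ga_{i_{r+1}}$ and to homogeneous $g_1\in\mU_\gb'$, expands the numerator as $\sum_k x_{r+1}^{N-k}q_k(\bm{x})$ with each $q_k$ in the subalgebra generated by the differences $x_k-x_l$, and shows that every nonzero $q_k/\gD_\gb$ has total degree at least $-r$. This rests on the global bound $\deg g>-\het(\gb)$ for homogeneous $0\neq g\in\mU_\gb'$ (Lemma \ref{Lem:lower_bound_for_height}), whose proof via the combinatorial inequality of Lemma \ref{Lem:degree_lower_bound} is exactly where finiteness of the type enters, uniformly in all types including $G_2$. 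After specializing $x_1=\cdots=x_r$, each term is a Laurent polynomial in $x_r-x_{r+1}$ of degree at least $-1$, which is the assertion. To complete your approach you would in effect have to reprove this degree inequality colour by colour while tracking which cancellations have already occurred; I recommend switching to the global degree count instead.
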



Let $\C[\bm{x}_\ggg]$ denote the polynomial algebra in $\bm{x}_\ggg$, and define a subspace $\mU_\ggg' \subseteq \mU_\ggg$ by
\[ \mU_\ggg' = \mU_\ggg \cap \big\{ g'(\bm{x})/\gD_\ggg\bigm| g'(\bm{x})\in \C[\bm{x}_\ggg]\big\}.
\]
Obviously $\mU_\ggg'$ is a $\Z$-graded subspace with respect to the total degree.

\begin{Lem}\label{Lem:lower_bound_for_height}
 Assume that $\ggg \neq 0$ and $g(\bm{x}) \in \mU_\ggg'$ is homogeneous. Then we have
 \[ \deg g(\bm{x}) > -\het(\ggg).
 \]
\end{Lem}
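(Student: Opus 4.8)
The plan is to run the argument through the filtration $\mU_\ggg=\bigcup_{\bm{\mu}}\gG_{\bm{\mu}}$ of the previous subsection and to bottom out in the positive-definiteness of the Cartan matrix. Since $g(\bm{x})\neq 0$, I would first choose $\bm{\mu}$ maximal in the lexicographic order with $\varphi_{\bm{\mu}}(g(\bm{x}))\neq 0$; then $\varphi_{\bm{\nu}}(g(\bm{x}))=0$ for all $\bm{\nu}>\bm{\mu}$, so $g(\bm{x})\in\gG_{\bm{\mu}}$. Because $\varphi_{\bm{\mu}}$ is a specialization of variables, it preserves total degree on nonzero homogeneous elements, hence $\deg g(\bm{x})=\deg\varphi_{\bm{\mu}}(g(\bm{x}))$, and by (\ref{eq:description_of_im}) the latter is an explicit ratio times a symmetric Laurent polynomial $h_0(\bm{y}_{\bm{\mu}})$. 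Thus everything reduces to estimating the degree of this explicit expression.

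The role of the hypothesis $g(\bm{x})\in\mU_\ggg'$ (numerator an honest polynomial) is to pin down $h_0$. Indeed $g(\bm{x})=g'(\bm{x})/\gD_\ggg$ has poles only along the cross-type hyperplanes $x_r^{(i)}=x_s^{(j)}$ ($i\neq j$), so $\varphi_{\bm{\mu}}(g(\bm{x}))$ is regular at a generic point of each $\{y_{a,u}^{(i)}=0\}$; comparing with (\ref{eq:description_of_im}), whose explicit numerator and denominator are regular and nonzero there, I conclude that $h_0$ has no negative powers in any variable, i.e.\ $h_0$ is a genuine (nonzero) polynomial and $\deg h_0\ge 0$. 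Therefore
\[
 \deg g(\bm{x})\ \ge\ \sum_{i}\sum_{(a,u)<(b,v)}2\min\{a,b\}\ -\ \sum_{i<j}\sum_{(a,u),(b,v)}\min\{|c_{ji}|a,|c_{ij}|b\},
\]
the difference of the degrees of the numerator and denominator in (\ref{eq:description_of_im}).

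Next I would rewrite these degrees in terms of the row-length multiplicities $m_a^{(i)}$ of $\bm{\mu}$. Using that the diagonal term $\min\{a,a\}=a$ contributes $\sum_a a\,m_a^{(i)}=m_i$, the numerator degree equals $\sum_i\sum_{a,b}\min\{a,b\}\,m_a^{(i)}m_b^{(i)}-\het(\ggg)$; this is exactly where the term $-\het(\ggg)$ is produced. Consequently the desired bound $\deg g(\bm{x})>-\het(\ggg)$ becomes the purely combinatorial inequality
\[
 \sum_{i}\sum_{a,b}\min\{a,b\}\,m_a^{(i)}m_b^{(i)}\ >\ \sum_{i<j}\sum_{a,b}\min\{|c_{ji}|a,|c_{ij}|b\}\,m_a^{(i)}m_b^{(j)},
\]
valid whenever $\ggg\neq 0$ (equivalently, whenever not all $m_a^{(i)}$ vanish).

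To prove this I would use the elementary identity $\min\{a,b\}=\#\{k\ge 1\mid k\le a,\ k\le b\}$. Setting $M_k^{(i)}=\sum_{a\ge k}m_a^{(i)}$, the left-hand side becomes $\sum_i\sum_k (M_k^{(i)})^2$, and for simply-laced $\fg$ the right-hand side is $\sum_{i<j\text{ adjacent}}\sum_k M_k^{(i)}M_k^{(j)}$, so the difference equals $\sum_k \tfrac12\sum_{i,j}c_{ij}M_k^{(i)}M_k^{(j)}$, which is strictly positive because the finite-type Cartan matrix $C$ is positive definite and some $M_k^{(i)}\neq 0$. I expect the main obstacle to be the non-simply-laced case: there $|c_{ij}|$ and $|c_{ji}|$ differ, so the two sides do not align under a single counting index $k$, and one must rescale the index (absorbing the $d_i$) before grouping the sum back into a sum of positive-definite quadratic forms in the $M_k^{(i)}$. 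Handling that rescaling carefully, while still reducing to positive-definiteness of the symmetrized Cartan matrix, is the one step I would treat with extra care.
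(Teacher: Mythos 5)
Your first half is exactly the paper's argument: the paper likewise picks $\bm{\mu}$ with $g(\bm{x})\in\gG_{\bm{\mu}}\setminus\gG_{\bm{\mu}}'$, observes that $\varphi_{\bm{\mu}}$ preserves the total degree, uses $g(\bm{x})\in\mU_\ggg'$ to conclude that $h_0(\bm{y}_{\bm{\mu}})$ in (\ref{eq:description_of_im}) is an honest polynomial, and thereby bounds $\deg g(\bm{x})$ from below by the degree of the explicit prefactor. The divergence is in how the resulting combinatorial inequality is proved. The paper isolates it as Lemma \ref{Lem:degree_lower_bound} and proves it by induction on the total number of boxes, removing a box from a carefully chosen column of a $\mu^{(j)}$ with $\langle \ga_j^\vee,\gb\rangle>0$ and running a case analysis over $c_j\in\{1,2,3\}$ (long root; short root in types $BCF$; short root in $G_2$). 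Your route via the transposed-partition counts $M_k^{(i)}$ and positive definiteness of the invariant form is genuinely different and, for simply-laced $\fg$, complete and cleaner. But as written it proves the lemma only in the simply-laced case; for types $B$, $C$, $F_4$, $G_2$ you have named the obstacle without resolving it, so there is a gap there.

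The gap is repairable along exactly the lines you anticipate, and the fix is short enough to record. Normalize $(\ga_i,\ga_j)=\gee\, d_i c_{ij}$ for some fixed $\gee>0$ (the choice of $\gee$ is irrelevant for positivity). Since $d_ic_{ij}=d_jc_{ji}$, one checks
\begin{equation*}
 \min\{|c_{ji}|a,|c_{ij}|b\}=\frac{|(\ga_i,\ga_j)|}{\gee\, d_id_j}\min\{d_ia,d_jb\},
 \qquad
 \min\{a,b\}=\frac{(\ga_i,\ga_i)}{2\gee\, d_i^2}\min\{d_ia,d_ib\},
\end{equation*}
so the difference of the two sides of your inequality equals
\begin{equation*}
 \frac{1}{2\gee}\sum_{i,j}\frac{(\ga_i,\ga_j)}{d_id_j}\sum_{a,b}\min\{d_ia,d_jb\}\,m_a^{(i)}m_b^{(j)}
 =\frac{1}{2\gee}\sum_{k\ge 1}(\gb_k,\gb_k),
\end{equation*}
where in the second step one writes $\min\{d_ia,d_jb\}=\#\{k\ge 1\mid k\le d_ia,\ k\le d_jb\}$, sets $N_k^{(i)}=\sum_{a:\,d_ia\ge k}m_a^{(i)}=M^{(i)}_{\lceil k/d_i\rceil}$, and puts $\gb_k=\sum_i d_i^{-1}N_k^{(i)}\ga_i$. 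Each summand is nonnegative by positive definiteness of $(\ ,\ )$, and the $k=1$ term is strictly positive because $N_1^{(i)}$ is the number of rows of $\mu^{(i)}$, which is nonzero for some $i$ since $\ggg\neq 0$. With this step supplied, your proof is correct and arguably more transparent than the paper's box-removal induction; without it, the non-simply-laced cases are unproved.
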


\begin{proof}
 Take $\bm{\mu}=(\mu^{(1)},\ldots,\mu^{(n)})$ so that $g(\bm{x}) \in \gG_{\bm{\mu}}\setminus \gG_{\bm{\mu}}'$, 
 and let $m_a^{(i)}$ be the number of rows of length $a$ 
 in $\mu^{(i)}$.
 The image $\varphi_{\bm{\mu}}(\gG_{\bm{\mu}})$ is also $\Z$-graded by the total degree,
 and it is enough to show that
 \[ \deg \varphi_{\bm{\mu}}\big(g(\bm{x})\big) > -\het(\ggg)
 \]
 since $\varphi_{\bm{\mu}}$ preserves the degree.
 Since $g(\bm{x}) \in \mU'_\ggg$, it is clear that $\varphi_{\bm{\mu}}\big(g(\bm{x})\big)$ is of the form (\ref{eq:description_of_im})
 with $h_0(\bm{y}_{\bm{\mu}})$ being a polynomial.
 Hence we have
 \begin{align*}
  \deg \varphi_{\bm{\mu}}\big(g(\bm{x})\big)\geq 2\sum_{i \in I}\sum_{(a,u)<(b,v)} \min\{a,b\} - \sum_{i<j}\sum_{(a,u), (b,v)}
  \min\{|c_{ji}|a,|c_{ij}|b\},   
 \end{align*}
 and the right-hand side is larger than $-\het(\ggg)$ by Lemma \ref{Lem:degree_lower_bound} given below.
 Hence the assertion is proved.
\end{proof}



\begin{proof}[Proof of Lemma \ref{Lem:simplicity_of_poles}]
 We show the lemma by the induction on $r$. The case $r=1$ is trivial.
 Assume that $r>1$, and set $\gb = \ga_{i_1}+\ga_{i_2}+\cdots + \ga_{i_{r+1}}$.
 We may assume that $\ggg - \gb \in Q^+$, since otherwise $g_r(\bm{x})$ does not contain the variable $x_{r+1}$ and hence the assertion
 trivially holds.
 Then, since 
 \[ \mU_\ggg \subseteq \C[x, (x-x')^{-1}\mid x \in \bm{x}_\ggg\setminus \bm{x}_\gb,\, x'\in \bm{x}_\ggg]\cdot
    \mU_\gb, 
 \]
 it is enough to show the assertion in the case $\ggg = \gb$.
 Therefore we assume $\ggg = \gb$ in the sequel.

 Without loss of generality we may assume that $g_1(\bm{x})$ is homogeneous, 
 and since multiplying a monomial $(x_1x_2\ldots x_{r+1})^a$ preserves $\mU_\gb$ and 
 does not affect the orders of the poles we are considering,
 we may further assume that $g_1(\bm{x}) \in \mU'_\gb$.
 Write $g_1(\bm{x}) =  g_1'(\bm{x})/ \gD_\gb$, and set $N = \deg g_1'(\bm{x})$.

 Let $\C[\bm{x}_\gb]'$ denote the $\C$-subalgebra of $\C[\bm{x}_\gb]$ generated by the vectors $(x_k-x_l)$ 
 for $1\leq k < l \leq r+1$.
 Since $\C[\bm{x}_\gb] = \C[x_{r+1}]\cdot \C[\bm{x}_\gb]'$ holds, $g_1'(\bm{x})$ is uniquely written as
 \begin{equation*}\label{eq:linear_comb}
  g_1'(\bm{x}) = \sum_{k =k_0}^N x_{r+1}^{N-k} q_k(\bm{x}),
 \end{equation*}
 where $q_k(\bm{x})$ is a homogeneous polynomial in $\C[\bm{x}_\gb]'$ with degree $k$, and $q_{k_0}(\bm{x}) \neq 0$.
 We claim that $q_{k_0}(\bm{x})/\gD_\gb \in \mU_\gb'$.
 Indeed, it is easy to check that $g_1'(\bm{x})\Big|_{x_1^{(i)}=\cdots=x_{\hat{c}_{ij}}^{(i)}=x_1^{(j)}} = 0$ implies
 $q_{k}(\bm{x})\Big|_{x_1^{(i)}=\cdots=x_{\hat{c}_{ij}}^{(i)}=x_1^{(j)}} = 0$ for all $k$.
 Moreover,
 the symmetry of $g_1'(\bm{x})$ under the exchange of variables $x_s^{(i)} \leftrightarrow x_{s'}^{(i)}$
 implies the same symmetry on $q_{k_0}(\bm{x})$, since
 \[ x_s^{N-k} =\big(x_{r+1}-(x_{r+1}-x_s)\big)^{N-k}\in x_{r+1}^{N-k} + \C[x_{r+1}]\cdot \big(\C[\bm{x}_\gb]'\big)^{>0}.
 \]
 Hence the claim is proved.

 Therefore we have $\deg \big(q_{k_0}(\bm{x})/\gD_\gb\big) \geq -r$ by Lemma \ref{Lem:lower_bound_for_height}, which implies
 \begin{equation} \label{eq:degree_inequality}
  \deg \big(q_k(\bm{x})/\gD_\gb\big) \geq -r \ \ \text{ for all } k \text{ such that } q_k(\bm{x}) \neq 0.
 \end{equation} 
 Note that, by the induction hypothesis, we have 
 \begin{equation*}\label{eq:def_of_gr}
  g_r(\bm{x}) = \prod_{s=2}^r(x_{s-1}-x_{s}) g_1(\bm{x})\Big|_{x_1=\cdots = x_{r}}= \sum_{k= k_0}^N x_{r+1}^{N-k}\Big(\prod_{s=2}^r(x_{s-1}-x_s) 
  q_k(\bm{x})/\gD_\gb\Big)\Big|_{x_1=\cdots = x_{r}}.
 \end{equation*}
 Since $q_k(\bm{x}) \in \C[\bm{x}_\gb]'$, we have
 \[ \Big(\prod_{s=2}^r(x_{s-1}-x_s) q_k(\bm{x})/\gD_\gb\Big)\Big|_{x_1=\cdots = x_{r}} \in \C[(x_r-x_{r+1})^{\pm 1}]
 \]
 for all $k$, and its degree is equal to or more than $-1$ by (\ref{eq:degree_inequality}). 
 Hence the assertion is proved.
\end{proof}

It remains to prove the following.

\begin{Lem}\label{Lem:degree_lower_bound}
 Let $\bm{\mu} = (\mu^{(1)},\ldots,\mu^{(n)})$ be an $I$-tuple of partitions such that $\mu^{(i)}$ has $m_a^{(i)}$ rows of length $a$,
 and assume that at least one of the partitions is nonzero. 
 Set $R(\mu^{(i)}) = \big\{(a,u) \bigm| a>0, 1\leq u \leq m_a^{(i)}\big\}$, and define
 \begin{align*}
  P(\bm{\mu})= 2\sum_{i \in I}\sum_{\begin{smallmatrix}(a,u),(b,v) \in R(\mu^{(i)}); \\ (a,u)<(b,v) 
  \end{smallmatrix}} \min\{a,b\} - \sum_{i<j}\sum_{\begin{smallmatrix} (a,u) \in R(\mu^{(i)}) \\ (b,v) \in R(\mu^{(j)})\end{smallmatrix}}
  \min\{|c_{ji}|a,|c_{ij}|b\}.   
 \end{align*}
 Then we have $P(\bm{\mu}) > -\sum_i |\mu^{(i)}|$.
\end{Lem}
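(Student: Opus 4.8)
The plan is to reformulate the inequality as the positive-definiteness of an explicit quadratic form, and then to prove that positivity by combining a functional (integral) realization of the kernel $\min\{a,b\}$ with the positive-definiteness of the symmetrized Cartan matrix. First I would rewrite $P(\bm{\mu})$ in terms of the row-multiplicities $m_a^{(i)}$. Collecting the rows of each $\mu^{(i)}$ by length and using $\min\{a,a\}=a$ on the diagonal, the first double sum becomes
\[
 2\sum_{i}\sum_{(a,u)<(b,v)}\min\{a,b\} = \sum_i\Big(\sum_{a,b}\min\{a,b\}\,m_a^{(i)}m_b^{(i)}\Big) - \sum_i|\mu^{(i)}|,
\]
while the second is exactly $\sum_{i<j}\sum_{a,b}\min\{|c_{ji}|a,|c_{ij}|b\}\,m_a^{(i)}m_b^{(j)}$. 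Hence $P(\bm{\mu}) = Q(\bm{\mu})-\sum_i|\mu^{(i)}|$, where
\[
 Q(\bm{\mu}) = \sum_i\sum_{a,b}\min\{a,b\}\,m_a^{(i)}m_b^{(i)} - \sum_{i<j}\sum_{a,b}\min\{|c_{ji}|a,|c_{ij}|b\}\,m_a^{(i)}m_b^{(j)},
\]
and the desired estimate is equivalent to $Q(\bm{\mu})>0$. It is convenient to prove the stronger statement that $Q$ is positive-definite as a quadratic form in all the variables $m_a^{(i)}\in\R$, from which $Q(\bm{\mu})>0$ for $\bm{\mu}\neq0$ follows at once.

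The device I would use is the elementary identity $\min\{cx,c'y\}=\int_0^\infty \mathbf{1}[s<cx]\,\mathbf{1}[s<c'y]\,ds$ for $c,c',x,y>0$. Associating to each node $i$ and scale $c>0$ the step function $\Phi^{c}_{m^{(i)}}(s)=\sum_a m_a^{(i)}\mathbf{1}[s<ca]$ in $L^2(0,\infty)$, this turns every term of $Q$ into an $L^2$ inner product:
\[
 \sum_{a,b}\min\{a,b\}\,m_a^{(i)}m_b^{(i)} = \big\|\Phi^{1}_{m^{(i)}}\big\|^2 =: D^{(i)}, \qquad \sum_{a,b}\min\{|c_{ji}|a,|c_{ij}|b\}\,m_a^{(i)}m_b^{(j)} = \big\langle \Phi^{|c_{ji}|}_{m^{(i)}},\,\Phi^{|c_{ij}|}_{m^{(j)}}\big\rangle.
\]
A one-line scaling computation gives $\big\|\Phi^{c}_{f}\big\|^2 = c\sum_{a,b}\min\{a,b\}\,f_af_b$, so that $\big\|\Phi^{|c_{ji}|}_{m^{(i)}}\big\|^2=|c_{ji}|\,D^{(i)}$ and similarly for $j$; moreover each cross term is manifestly non-negative.

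To finish I would apply the Cauchy--Schwarz inequality to each cross term,
\[
 \big\langle \Phi^{|c_{ji}|}_{m^{(i)}},\,\Phi^{|c_{ij}|}_{m^{(j)}}\big\rangle \le \sqrt{|c_{ji}|\,D^{(i)}}\,\sqrt{|c_{ij}|\,D^{(j)}} = \sqrt{|c_{ij}c_{ji}|}\,r_ir_j, \qquad r_i:=\sqrt{D^{(i)}},
\]
which reduces the problem to the rank-$n$ inequality
\[
 Q(\bm{\mu}) \ge \sum_i r_i^2 - \sum_{i<j}\sqrt{|c_{ij}c_{ji}|}\,r_ir_j = \tfrac12\, r^{\mathsf t}\widehat{A}\, r,
\]
where $\widehat{A}$ is the symmetric matrix with $\widehat{A}_{ii}=2$ and $\widehat{A}_{ij}=-\sqrt{|c_{ij}c_{ji}|}$ $(i\neq j)$. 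Using the symmetrizability relation $d_i|c_{ij}|=d_j|c_{ji}|$ one checks $\widehat{A} = D^{1/2}CD^{-1/2}$ with $D=\mathrm{diag}(d_1,\dots,d_n)$; being similar to the finite-type Cartan matrix $C$, the symmetric matrix $\widehat{A}$ has only positive eigenvalues and is therefore positive-definite. Since $D^{(i)}>0$ whenever $m^{(i)}\neq0$ (the kernel $\min\{a,b\}$ being positive-definite), $\bm{\mu}\neq0$ forces $r\neq0$ and hence $Q(\bm{\mu})\ge\tfrac12\, r^{\mathsf t}\widehat{A}\,r>0$.

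The main obstacle is that the diagonal and cross terms of $Q$ carry genuinely different scalings of the variables: the same node $i$ enters its diagonal term with scale $1$ but each incident cross term with scale $|c_{ji}|$, so $Q$ is not literally the $\min$-inner-product form twisted by $C$, and no single change of variables can align all the scales simultaneously. The Cauchy--Schwarz step is precisely what decouples these incompatible scalings into the scalars $r_i$; after that, the relation $d_i|c_{ij}|=d_j|c_{ji}|$ is exactly what makes $\sqrt{|c_{ij}c_{ji}|}$ coincide with the off-diagonal entry of the symmetrized Cartan matrix $D^{1/2}CD^{-1/2}$, so that Cartan positivity applies. Verifying that identification and the non-negativity/scaling bookkeeping is the only delicate point; everything else is routine.
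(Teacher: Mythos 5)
Your proof is correct, but it takes a genuinely different route from the paper's. The paper argues by induction on the total number of boxes $\sum_i|\mu^{(i)}|$: using that $C$ is of finite type it finds $j$ with $\langle \alpha_j^\vee,\beta\rangle>0$ for a certain $\beta\in Q^+$ built from the rightmost columns, and then removes one box from a suitable column of $\mu^{(j)}$ so that $P$ drops by at most $1$; this forces a case analysis according to whether $\alpha_j$ is long or short and the type ($ADE$ versus $BCF$ versus $G_2$). You instead recast the inequality as strict positivity of the quadratic form $Q$ in the multiplicities $m_a^{(i)}$ and prove that positivity directly: the reduction $P(\bm{\mu})=Q(\bm{\mu})-\sum_i|\mu^{(i)}|$ is correct (the diagonal of the ordered double sum contributes exactly $\sum_{(a,u)}\min\{a,a\}=|\mu^{(i)}|$), the integral representation $\min\{cx,c'y\}=\int_0^\infty \mathbf{1}[s<cx]\,\mathbf{1}[s<c'y]\,ds$ turns every term into an $L^2(0,\infty)$ inner product with the scaling $\|\Phi^c_f\|^2=c\,\|\Phi^1_f\|^2$, Cauchy--Schwarz decouples the mismatched scales into the scalars $r_i=\sqrt{D^{(i)}}$, and the identification $\widehat{A}=D^{1/2}CD^{-1/2}=D^{-1/2}(DC)D^{-1/2}$ (using $d_ic_{ij}=d_jc_{ji}$ and $c_{ij}\le 0$) reduces everything to positive definiteness of the symmetrized Cartan matrix; the degenerate case $c_{ij}=0$ and the strict positivity of $D^{(i)}$ for $m^{(i)}\neq 0$ are both handled correctly. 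Your approach is uniform in type, avoids the long/short case analysis entirely, and yields the slightly stronger fact that $Q$ is positive definite over $\R$ rather than merely positive on nonnegative integer vectors; what the paper's induction provides instead is explicit combinatorial control over which box to remove, in the same spirit as the surrounding arguments, though that extra information is not used elsewhere. Either proof suffices for Lemma \ref{Lem:degree_lower_bound}.
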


\begin{proof}
 We prove the assertion by the induction on $\sum_i |\mu^{(i)}|$. The case $\sum_i |\mu^{(i)}| = 1$ is easily checked.

 Assume that $\sum_i |\mu^{(i)}| > 1$.
 For $i \in I$, set
 \[ c_i = \begin{cases} 1 & \text{if $\ga_i$ is long}, \\
                        2 & \text{if $\fg$ is of type $BCF$ and $\ga_i$ is short},\\
                        3 & \text{if $\fg$ is of type $G_2$ and $\ga_i$ is short,}\end{cases}
 \]
 where we say every $\ga \in R^+$ is long when $\fg$ is of type $ADE$.
 Let 
 \[ L = \max\big\{ \lceil \mu_1^{(i)}/c_i \rceil\bigm| i\in I\big\},
 \]
 where $\lceil a \rceil$ is the smallest integer equal to or larger than
 $a$. Let $N_i$ ($i \in I$) denote the number of boxes in $\mu^{(i)}$ strictly right to the $c_i(L-1)$-th column;
 \[ N_i = m_L^{(i)} \ (c_i =1), \ \  N_i=2m_{2L}^{(i)} + m_{2L-1}^{(i)}\ (c_i=2), \ \  N_i= 3m_{3L}^{(i)} + 2m_{3L-1}^{(i)}+m_{3L-2}^{(i)}\ 
    (c_i = 3).
 \]
 Set $\gb = \sum_i N_i \ga_i\in Q^+$. 
 Since $\fg$ is of finite type, there exists $j \in I$ such that $\langle \ga_j^\vee, \gb\rangle > 0$.
 Fix such an index $j$. We claim that there exists a partition $\mu'$ of $|\mu^{(j)}|-1$ such that 
 \[ P(\bm{\mu}) - P(\bm{\mu}') \geq -1,
 \]
 where $\bm{\mu}'$ is the $I$-tuple of partitions obtained from $\bm{\mu}$ by replacing $\mu^{(j)}$ with $\mu'$.
 By the induction hypothesis, this completes the proof.

 Let us prove the claim. First assume that $c_j =1$. 
 Note that $m_L^{(j)}>0$ holds since $\langle \ga_j^\vee, \gb\rangle > 0$.
 Let $\mu'$ be the partition obtained from $\mu^{(j)}$ by removing one box in the $L$-th column.
 Then it is directly checked that
 \begin{align*}
  P(\bm{\mu}) - P(\bm{\mu}') = 2(m_L^{(j)} -1) - \sum_{\begin{smallmatrix}i \in I;\\ c_ {ij}<0\end{smallmatrix}} N_i
  = \langle \ga_j^\vee, \gb\rangle -2\geq -1,
 \end{align*}
 where $\bm{\mu}'$ is defined as above. 
 Hence the claim is proved in this case.

 Next assume that $c_j = 2$,
 and let us further assume that both $m_{2L-1}^{(j)}$ and $m_{2L}^{(j)}$ are nonzero.
 For $k = 0,1$, let $\mu'[k]$ denote the partition obtained from $\mu^{(j)}$
 by removing one box in the $(2L-k)$-th column,
 and set $\bm{\mu}'[k]$ to be the $I$-tuple of partitions obtained from $\bm{\mu}$ by replacing $\mu^{(j)}$ with $\mu'[k]$.
 Then it follows that
 \[ P(\bm{\mu}) - P(\bm{\mu}'[k]) = 2(m_{2L}^{(j)}+\gd_{k1} m_{2L-1}^{(j)}-1) - 
    \sum_{\begin{smallmatrix}i \in I;\\ c_ {ji}=-2\end{smallmatrix}} m_L^{(i)}
    - \sum_{\begin{smallmatrix}i \in I;\\ c_ {ji}=-1\end{smallmatrix}} (m_{2L}^{(i)}+\gd_{k1}m_{2L-1}^{(i)}).
 \]
 Hence we have
 \[ \big(P(\bm{\mu})-P(\bm{\mu}'[0])\big) + \big(P(\bm{\mu})-P(\bm{\mu}'[1])\big)= \langle \ga_j^\vee, \gb\rangle-4 >-4,
 \]  
 which implies that there is $k \in \{0,1\}$ such that $P(\bm{\mu}) - P(\bm{\mu}'[k])\geq -1$.
 When $m_{2L-k}^{(j)}=0$ for either $k=0$ or $k=1$, we can show similarly that $P(\bm{\mu})-P(\bm{\mu}'[k']) \geq -1$ for $k' \neq k$.
 Hence the claim is verified in this case.

 The case $c_j = 3$ is proved similarly, and we omit the detail. 
\end{proof}

\subsection{Proof of Proposition \ref{Prop:dual_isom}}\label{subsection:pf_of_dual}

We will define functions $g_{\ggg,k}(\bm{x}) \in \ol{\mU}_\ggg$ for $\ggg \in R^+$ and $k \in \Z_{>0}$.
Let $(\ , \ )$ denote the unique nondegenerate $W$-invariant symmetric bilinear form on $P$ normalized so that the square length of $\gt$ is
$2$.

First assume that $\fg$ is not of type $G_2$, and 
set $\Ish = \{ i \in I \mid \ga_i \text{ is short\,}\}$. 
Fix $\ggg = \sum_i m_i\ga_i \in R^+$, and write $\ol{\ggg} = \sum_{ i\in \Ish;\, m_i\notin 2\Z}\ga_i$.
It follows that $(\ggg, \ggg) + (\ol{\ggg},\ol{\ggg}) = 2$.
Let
\[ h_\ggg(\bm{x}) = \!\!\prod_{i \in I \setminus \Ish} \prod_{r<s} (x_r^{(i)}-x_s^{(i)})^2 \prod_{i \in \Ish}\!\prod_{\begin{smallmatrix}
   r<s \\ r,s: \text{\,even,\,or}\\ r,s: \text{\,odd \ \ \ \ }\end{smallmatrix}}\!\!\!
   (x_r^{(i)}-x_s^{(i)})^2 \prod_{\begin{smallmatrix}i,j \in \Ish; \\ i<j,\,c_ {ij}=-1 \end{smallmatrix}} \prod_{\begin{smallmatrix}
   r:\text{\,even},\,s:\text{\,odd,\,or}\\ r:\text{\,odd},\,s:\text{\,even\ \ }
\end{smallmatrix}}\!\!\!(x_r^{(i)} -x_s^{(j)}).
\]
For $i \in I$ and $1\leq r \leq m_i$, put
\[ \ggg^{(i,r)} = \begin{cases} \ggg -(-1)^r \ol{\ggg} & \text{if $i \in I_{\mathrm{sh}}$},\\
                             \ggg             & \text{otherwise},
               \end{cases}
\]
and define
\[ g_{\ggg,k}(\bm{x}) = \mathrm{Sym}\Bigg( \frac{x^{-k+1}\prod_{i \in I}\prod_{r=1}^{m_i}(x_r^{(i)})^{-(\ga_i,\ggg^{(i,r)})}\cdot
   h_\ggg(\bm{x})}{\prod_{\begin{smallmatrix}i<j \\ c_{ij}<0\end{smallmatrix}}
   \prod_{r,s} \big(x_r^{(i)}-x_s^{(j)}\big)}\Bigg)
\] 
for $k \in \Z$, where $\mathrm{Sym}$ denotes the symmetrization over 
$n$ sets of variables $\big\{x_r^{(i)} \bigm| 1\leq r \leq m_i\big\}$ ($i \in I$), and $x \in \bm{x}_\ggg$ is an arbitrarily fixed variable. 
We easily check that $g_{\ggg,k}(\bm{x}) \in \mU_\ggg$.
Moreover, it follows from a direct calculation that
\begin{align*}\label{eq:degree_cal}
 \deg h_\ggg(\bm{x}) &= \sum_{i \in I\setminus \Ish} m_i(m_i-1) +\sum_{i \in \Ish} \left\lceil \frac{m_i(m_i-2)}{2}\right\rceil + 
 \sum_{\begin{smallmatrix}i,j \in \Ish;\\ i<j,\,c_ {ij}=-1\end{smallmatrix}} \bigg\lfloor \frac{m_im_j}{2} \bigg\rfloor\nonumber\\ 
 &=\frac{1}{2}\big\{(\ggg,\ggg) + (\ol{\ggg},\ol{\ggg})\big\} - \het (\ggg) + \sum_{\begin{smallmatrix}i<j \\ c_{ij}<0\end{smallmatrix}}
  m_im_j=1-\het (\ggg) +\sum_{\begin{smallmatrix}i<j \\ c_{ij}<0\end{smallmatrix}} m_im_j,
\end{align*}
and then we have by (\ref{eq:convenient}) that
\begin{equation}\label{eq:compatibility}
 \langle f_{\ggg,k}, g_{\ggg,k}(\bm{x})\rangle \in \C^\times, \ \ \ \langle f_{\ggg',k'}, g_{\ggg,k}(\bm{x})\rangle = 0 
 \text{ if }\ggg-\ggg' \notin Q^+
 \text{ or } \ggg=\ggg', k\neq k'.
\end{equation}
It is also proved from Lemma \ref{Lem:description_of_barU} that $g_{\ggg,k}(\bm{x}) \in \ol{\mU}_\ggg$ if $k > 0$.

When $\fg$ is of type $G_2$, define $g_{\ggg,k}(\bm{x}) \in \ol{\mU}_\ggg$ for $\ggg \in R^+$ and $k \in \Z_{>0}$ by
\[ g_{\ggg,k}(\bm{x}) = \begin{cases} (x_1^{(i)})^{-k-1} & \text{if } \ggg= \ga_i,\\
                                      (x_1^{(1)})^{-k+m-1}\prod_{r=1}^m(x_r^{(2)})^{-1}/\gD_\ggg 
    & \text{if } \ggg = \ga_1+m \ga_2,\\
                                      \mathrm{Sym}\Big((x_1^{(1)})^{-k}(x_2^{(1)})^{-1}(x_1^{(1)}-x_2^{(1)})^2/ \gD_\ggg\Big) 
    & \text{if } \ggg = 2\ga_1+3\ga_2, 
   \end{cases}
\]
where $\ga_1$ is long and $\ga_2$ is short.
We easily check that these $g_{\ggg,k}(\bm{x})$ also satisfy (\ref{eq:compatibility}).

Now we show Proposition \ref{Prop:dual_isom}.
The proof is carried out in a similar line as \cite[Proposition 3.1.3]{MR1934307}, in which the case $\fg = \mathfrak{sl}_3$ is treated.

\begin{proof}[Proof of Proposition \ref{Prop:dual_isom}]
 The injectivity is proved as follows. 
 Let $0 \neq g(\bm{x}) \in \ol{\mU}_\ggg$, and consider it as a formal Laurent series by expanding
 all $(x^{(i)}_r -x^{(j)}_s)^{-1}$ ($i < j$) in positive power of 
 $x_s^{(j)}/x_r^{(i)}$. 
 If the coefficient of a monomial $\prod_r (x^{(1)}_r)^{k_r^{(1)}}\cdots \prod_r(x^{(n)}_r)^{k_r^{(n)}}$ in the series 
 is $c \neq 0$ , then taking $F = \prod_r f_{1,-k_r^{(1)}-1} \cdots \prod_r f_{n,-k_r^{(n)}-1}$, we have $\langle F, g(\bm{x})\rangle =c 
 \neq 0$.
 Hence the injectivity is proved.
 
 Next we prove that for any $F \in (\ol{U}^-_{\bL})_{-\ggg}$ there exists $g(\bm{x}) \in \ol{\mU}_\ggg$ such that 
 $\langle F, g(\bm{x})\rangle \neq 0$, which implies the surjectivity and completes the proof.
 We may assume $F \in U(t\fn_-[t])$ by the Poincar\'{e}-Birkhoff-Witt theorem.
 Fix a total ordering $\preceq$ on $R^+$ such that $\ga \preceq \gb$ holds if $\gb -\ga \in Q^+$.
 We also denote by $\preceq$ the lexicographic ordering on $R^+ \times \Z_{>0}$.
 Let $B$ be the Poincar\'{e}-Birkhoff-Witt basis of $U(t\fn_-[t])$ with respect to this ordering;
 \[ B=\big\{f_{\gb_N,k_N} \cdots f_{\gb_2,k_2}f_{\gb_1,k_1} \bigm| \gb_a \in R^+, k_a \in \Z_{>0}, (\gb_N,k_N) \preceq \cdots \preceq (\gb_1,k_1)
    \big\}.
 \]
 Write $F = \sum_{b \in B} c_b b$ with $c_b \in \C$, and let $b_0=f_{\gb_N,k_N} \cdots f_{\gb_2,k_2}f_{\gb_1,k_1}\in B$ be the minimum vector
 with respect to the right-to-left lexicographic order such that $c_{b_0} \neq 0$.
 Then define $g(\bm{x})\in \ol{\mU}_\ggg$ by
 \[ g(\bm{x}) = \mathrm{Sym}\Big(\psi_{\gb_{<N}}\big(g_{\gb_N,k_N}(\bm{x}_{\gb_N})\big)\cdots \psi_{\gb_{<2}}\big(g_{\gb_2,k_2}(\bm{x}_{\gb_2})
    \big)\cdot    g_{\gb_1,k_1}(\bm{x}_{\gb_1})\Big),
 \]
 where we set $\gb_{<a} = \gb_1+\cdots + \gb_{a-1}$, and  $\psi_{\gb}$ for $\gb = \sum_i m_i\ga_i$ to be the algebra homomorphism defined by
 $\psi_{\gb}(x_r^{(i)}) = x_{r+m_i}^{(i)}$.
 It is proved from (\ref{eq:compatibility}) that 
 \begin{equation*}\label{eq:property_of_monomial}
  \langle b_0, g(\bm{x}) \rangle \in \C^\times, \ \ \ \langle b', g(\bm{x})\rangle = 0 \ \ \text{for} \ b'\in B \ \text{such that}
    \ b' \succ b_0,
 \end{equation*}
 and hence we have $\langle F, g(\bm{x})\rangle \neq 0$, as required. The proof is complete.
\end{proof}

\subsection{Proof of Lemma \ref{Lem:fermionic_form}}\label{subsection:fermionic_form}

Let $\gl =\sum_k \ell_k \varpi_{i_k} \in P^+$ and $\ggg=\sum_i m_i\ga_i \in Q^+$ be as in Lemma \ref{Lem:fermionic_form}.
Let $\bm{\mu}$ be an $I$-tuple of partitions such that $|\mu^{(i)}| = m_i$ and $\mu^{(i)}$ has $m_a^{(i)}$ rows of length $a$,
and assume that $g(\bm{x}) \in \mV_\ggg \cap \gG_{\bm{\mu}}$.
By Lemma \ref{Lem:description_of_zero-pole} and the definition of $\mV_\ggg$,
the function $\varphi_{\bm{\mu}}\big(g(\bm{x})\big)$ is written as $\varphi_{\bm{\mu}}\big(g(\bm{x})\big) = h_0(\bm{y}_{\bm{\mu}})
h_1(\bm{y}_{\bm{\mu}})$, where
\[ h_0(\bm{y}_{\bm{\mu}})=\frac{
   \prod_{i\in I}\prod_{(a,u)<(b,v)} (y_{a,u}^{(i)}-y_{b,v}^{(i)})^{2\min\{a,b\}}}{\prod_{i \in I}\prod_{(a,u)}
   (y_{a,u}^{(i)})^{\sum_{k\in S_i}\min\{a,\ell_k\}}\prod_{i<j}\prod_{(a,u),(b,v)}
   (y_{a,u}^{(i)}-y_{b,v}^{(j)})^{\min\{|c_{ji}|a,|c_{ij}|b\}}},
\]
and $h_1(\bm{y}_{\bm{\mu}})$ is a polynomial
symmetric under the exchange $y_{a,u}^{(i)} \leftrightarrow y_{a,v}^{(i)}$.
Moreover by Lemma \ref{Lem:description_of_barU}, we have
\[ \deg_{y_{a,u}^{(i)}} \varphi_{\bm{\mu}}\big(g(\bm{x})\big) \leq -2a
\]
for all $i,a,u$.
Since $\deg_{y_{a,u}^{(i)}} h_0(\bm{y}_{\bm{\mu}}) = -p_a^{(i)} -2a$, we have $\deg_{y_{a,u}^{(i)}} h_1(\bm{y}_{\bm{\mu}}) 
\leq p_a^{(i)}$ for all $i,a,u$.
Hence it follows that $\varphi_{\bm{\mu}} \big (\mV_\ggg \cap \gG_{\bm{\mu}}\big) = 0$ if $p_a^{(i)} < 0$ for some $i,a$, and otherwise
\[ \dim \varphi_{\bm{\mu}} \big(\mV_\ggg \cap \gG_{\bm{\mu}}\big) \leq \prod_{\begin{smallmatrix}i \in I\\ a>0\end{smallmatrix}}
   \begin{pmatrix} p_a^{(i)} + m_a^{(i)}\\m_a^{(i)}\end{pmatrix}.
\]
Since $\varphi_{\bm{\mu}} (\mV_\ggg \cap \gG_{\bm{\mu}}) \cong (\mV_\ggg \cap\gG_{\bm{\mu}})/ (\mV_\ggg \cap \gG_{\bm{\mu}}')$ and
\[ \dim \mV_\ggg = \sum_{\bm{\mu}}\dim (\mV_\ggg \cap\gG_{\bm{\mu}})/ (\mV_\ggg \cap \gG_{\bm{\mu}}'),
\]
the lemma follows.

\section*{Acknowledgments}
The author is supported by JSPS Grant-in-Aid for Young Scientists (B) No.\ 25800006.

\newcommand{\etalchar}[1]{$^{#1}$}
\def\cprime{$'$} \def\cprime{$'$} \def\cprime{$'$} \def\cprime{$'$}


\end{document}